\theoremstyle{plain}
\newtheorem{thm}{Theorem}[section]
\newtheorem{lem}{Lemma}[section]
\theoremstyle{definition}
\newtheorem{rem}{Remark}[section]
\numberwithin{equation}{section}
\numberwithin{defn}{section}
\numberwithin{figure}{section}
\begin{document}

\title{Pointwise Bounds and Blow-up for Systems of Nonlinear Fractional Parabolic Inequalities}

\author{Steven D. Taliaferro\\
Department of Mathematics\\
Texas A\&M University\\
College Station, TX 77843}

\date{}
\maketitle	

\begin{abstract}
We investigate nonnegative
solutions $u(x,t)$ and $v(x,t)$ of the nonlinear system of
inequalities
\[
 \begin{rcases}
  0\leq(\partial_t -\Delta)^\alpha u\leq v^\lambda \\
  0\leq (\partial_t -\Delta)^\beta v\leq u^\sigma
 \end{rcases}
 \quad\text{ in }\mathbb{R}^n \times\mathbb{R},\,n\geq 1
\]
satisfying the initial conditions
\[
 u=v=0\quad\text{ in }\mathbb{R}^n \times(-\infty,0)
\]
where $\lambda,\sigma,\alpha$, and $\beta$ are positive constants.

Specifically, using the definition of the fractional heat operator
$(\partial_t-\Delta)^\alpha$ given in \cite{T}, we obtain, when they
exist, optimal pointwise upper bounds on
$\mathbb{R}^n \times(0,\infty)$ for nonnegative solutions $u$ and $v$
of this initial value problem with particular
emphasis on these bounds as $t\to0^+$ and as $t\to\infty$.  
\medskip

\noindent 2010 Mathematics Subject Classification. 35B09, 35B33, 35B44, 35B45,
35K58, 35R11, 35R45.

\noindent {\it Keywords}. Blow-up, Pointwise bound, Fractional heat
operator, Parabolic system.
\end{abstract}

\section{Introduction}\label{sec1}
In this paper we study pointwise upper bounds for nonnegative
solutions $u(x,t)$ and $v(x,t)$ of the nonlinear system of
inequalities
\begin{equation}\label{I.1}
 \begin{rcases}
  0\leq(\partial_t -\Delta)^\alpha u\leq v^\lambda \\
  0\leq (\partial_t -\Delta)^\beta v\leq u^\sigma
 \end{rcases}
 \quad\text{ in }\mathbb{R}^n \times\mathbb{R},\,n\geq 1
\end{equation}
satisfying the initial conditions
\begin{equation}\label{I.2}
 u=v=0\quad\text{ in }\mathbb{R}^n \times(-\infty,0),
\end{equation}
where $\lambda,\sigma,\alpha$, and $\beta$ are positive constants.

Our results in this paper for the system \eqref{I.1}, \eqref{I.2} are
an extension of our results in \cite{T} on pointwise bounds for
nonnegative solutions $u(x,t)$ of the scalar initial value problem
\[  
0\leq(\partial_t -\Delta)^\alpha u\leq u^\lambda
\quad\text{ in }\mathbb{R}^n \times\mathbb{R}
\]
\[
 u=0\quad\text{ in }\mathbb{R}^n \times(-\infty,0),
\]
where $\lambda$ and $\alpha$ are positive constants.

As in \cite{T}, we define the fully fractional heat operator
\begin{equation}\label{I.3}
 (\partial_t -\Delta)^\alpha :Y^{p}_{\alpha}\to X^p
\end{equation}
for
\begin{equation}\label{I.4}
 \left(p>1\text{ and }0<\alpha<\frac{n+2}{2p}\right)\quad\text{ or }\quad\left(p=1\text{ and }0<\alpha\leq\frac{n+2}{2p}\right)
\end{equation}as the inverse of the operator
\begin{equation}\label{I.5}
 J_\alpha :X^p \to Y^{p}_{\alpha}
\end{equation}
where
\begin{equation}\label{I.6}
 X^p:=\bigcap_{T\in\mathbb{R}}L^p (\mathbb{R}^n \times\mathbb{R}_T ),\quad\mathbb{R}_T :=(-\infty,T),
\end{equation}
\begin{equation}\label{I.7}
 J_\alpha f(x,t):=\iint_{\mathbb{R}^n \times\mathbb{R}_t}\Phi_\alpha (x-\xi,t-\tau)f(\xi,\tau)\,d\xi \,d\tau
\end{equation}
and
\begin{equation}\label{I.8}
 Y^{p}_{\alpha}:=J_\alpha (X^p ).
\end{equation}
By \eqref{I.6} we mean $X^p$ is the set of all measurable functions
$f:\mathbb{R}^n \times\mathbb{R}\to\mathbb{R}$ such that
$$\| f\|_{L^p (\mathbb{R}^n \times\mathbb{R}_T )}<\infty\quad\text{ for all }T\in\mathbb{R}.$$
In the definition \eqref{I.7} of $J_\alpha$,
\begin{equation}\label{I.9}
\Phi_\alpha (x,t):=\frac{t^{\alpha-1}}{\Gamma(\alpha)}\,\frac{1}{(4\pi
  t)^{n/2}}e^{-|x|^2 /(4t)}\raisebox{2pt}{$\chi$}_{(0,\infty)}(t)
\end{equation}
is the fractional heat kernel.

When $p$ and $\alpha$ satisfy \eqref{I.4}, it was shown in \cite{T}
that the operator \eqref{I.5} has among others the following properties:
\begin{enumerate}
\item[(P1)] it makes sense because
  $J_\alpha f\in L^{p}_{\text{loc}}(\mathbb{R}^n
  \times\mathbb{R})\text{ for }f\in X^p$,
\item[(P2)] it is one-to-one and onto,
\item[(P3)] if $f\in X^p$ and $u=J_\alpha f$ then $f=0$ in
  $\mathbb{R}^n \times(-\infty,0)$ if and only if $u=0$ in
  $\mathbb{R}^n \times(-\infty,0)$.
\end{enumerate}
By properties (P1) and (P2) we can indeed define \eqref{I.3} as the
inverse of \eqref{I.5} when $p$ and $\alpha$ satisfy \eqref{I.4}.
Property (P3) will be needed to handle the initial conditions
\eqref{I.2}.

According to our results in Section \ref{sec2} there are essentially
only three possibilities for nonnegative solutions $u\in Y^p_\alpha$
and $v\in Y^q_\beta $ of \eqref{I.1}, \eqref{I.2} depending on $n$,
$\lambda$, $\sigma$, $\alpha$, $\beta$, $p$, and $q$:

\begin{enumerate}
\item[(i)] The only solution is $u\equiv v \equiv 0$ in
  $\mathbb{R}^n\times\mathbb{R}$;
\item[(ii)] There exist sharp nonzero pointwise bounds for solutions as
  $t\to 0^+$ and as $t\to\infty$;
\item[(iii)] There do not exist pointwise bounds for solutions as
  $t\to 0^+$ and as $t\to\infty$.
\end{enumerate}
All possiblities can occur. For the precise statements of
possibilities (i), (ii), and (iii) see Theorem \ref{thm2.1}, Theorems
\ref{thm2.2}--\ref{thm2.4}, and Theorems \ref{thm2.5} and
\ref{thm2.6}, respectively.

The operator  \eqref{I.3} is a fully fractional heat operator as
opposed to time fractional heat operators in which the fractional
derivatives are only with respect to $t$, and space fractional heat
operators, in which the fractional derivatives are only with respect to $x$.

Some recent results for nonlinear PDEs containing time (resp. space)
fractional heat operators can be found in \cite{AV,A,ACV,DVV,K,
  KSVZ,M,OD,SS,VZ,ZS}
(resp. \cite{AABP,AMPP,BV,CVW,DS,FKRT,GW,JS,MT,PV,V,VV,VPQR}). Except
for \cite{T}, we know of no results for nonlinear PDEs containing the
fully fractional heat operator $(\partial_t -\Delta)^\alpha$. However
results for linear PDEs containing this operator, including in
particular
\[
(\partial_t-\Delta)^\alpha u=f,
\]
where $f$ is a given function, can be found in \cite{ACM,NS,SK,ST}.

\section{Statement of Results}\label{sec2}
In this section we state our results concerning pointwise bounds for
nonnegative solutions
\begin{equation}\label{2.1}
 u\in Y^{p}_{\alpha}\quad\text{ and }\quad v\in Y^{q}_{\beta}
\end{equation}
of the nonlinear system of inequalities
\begin{equation}\label{2.2}
 \begin{rcases}
  0\leq(\partial_t -\Delta)^\alpha u\leq v^\lambda \\
  0\leq(\partial_t -\Delta)^\beta v\le u^\sigma
 \end{rcases}
 \quad\text{ in }\mathbb{R}^n \times\mathbb{R},\,n\geq1
\end{equation}
satisfying the initial conditions
\begin{equation}\label{2.3}
 u=v=0\quad\text{ in }\mathbb{R}^n \times(-\infty,0),
\end{equation}
where
\begin{equation}\label{2.4}
 p,q\in[1,\infty),\quad\lambda,\sigma,\alpha,\beta\in(0,\infty),
\end{equation}
and, as in the definition in Section \ref{sec1} of the operator \eqref{I.3}, $p$ and $\alpha$ satisfy 
\begin{equation}\label{2.5}
 \left(p>1\text{ and }0<\alpha<\frac{n+2}{2p}\right)\quad\text{ or }\quad\left(p=1\text{ and }0<\alpha\leq\frac{n+2}{2p}\right)
\end{equation}
and $q$ and $\beta$ satisfy
\begin{equation}\label{2.6}
 \left(q>1\text{ and }0<\beta<\frac{n+2}{2q}\right)\quad\text{ or }\quad\left(q=1\text{ and }0<\beta\leq\frac{n+2}{2q}\right).
\end{equation}

If $p$ and $\alpha$ satisfy \eqref{2.5}, $u\in Y^{p}_{\alpha}$, and
$(\partial_t -\Delta)^\alpha u\geq0$ in
$\mathbb{R}^n \times\mathbb{R}$ then
\[u=J_\alpha ((\partial_t -\Delta)^\alpha u)\geq0\quad\text{ in
}\mathbb{R}^n \times\mathbb{R}\]
by \eqref{I.7} and the nonnegativity of $\Phi_\alpha$.  Thus the
assumption that $u$ (and similarly $v$) is nonnegative can be omitted
when studying the problem \eqref{2.1}--\eqref{2.6}.

Moreover, when studying the problem \eqref{2.1}--\eqref{2.6}, we can
assume without loss of generality that
\begin{equation}\label{2.7}
 (n+2-2p\alpha)q^2 \sigma\leq(n+2-2q\beta)p^2 \lambda,
\end{equation}
for otherwise switch the symbols for $u,\lambda,\alpha$, and $p$ with
the symbols for $v,\sigma,\beta$, and $q$ respectively.

If \eqref{2.5} holds then either
\begin{equation}\label{2.8}
 2p\alpha<n+2
\end{equation}
or
\begin{equation}\label{2.9}
 2p\alpha=n+2.
\end{equation}
The following Theorems \ref{thm2.1}--\ref{thm2.6} deal with solutions
of \eqref{2.1}--\eqref{2.3} when \eqref{2.4}--\eqref{2.7} and
\eqref{2.8} hold; the only exception being that \eqref{2.7} and
\eqref{2.8} are not assumed in Theorems \ref{thm2.3} and \ref{thm2.4}.
Theorem \ref{thm2.7} deals with solutions of \eqref{2.1}--\eqref{2.3} in the
simpler case when \eqref{2.4}--\eqref{2.7} and \eqref{2.9} hold.

If \eqref{2.4}, \eqref{2.7}, and \eqref{2.8} hold then
$$2q\beta<n+2,$$
\begin{equation}\label{2.10}
 0<\sigma\leq\nu(\lambda):=\frac{(n+2-2q\beta)p^2}{(n+2-2p\alpha)q^2}\lambda
\end{equation}
and the curves $\sigma=\nu(\lambda)$ and
\begin{equation}\label{2.11}
 \sigma=\mu(\lambda):=\frac{2p\beta}{n+2-2p\alpha}+\frac{n+2}{(n+2-2p\alpha)\lambda}
\end{equation}
intersect at
\begin{equation}\label{2.12}
 (\lambda_0 ,\sigma_0 )=\left(\frac{(n+2)q}{(n+2-2q\beta)p},\,\frac{(n+2)p}{(n+2-2p\alpha)q}\right).
\end{equation}
See Figure \ref{fig1}.  Thus assuming \eqref{2.4}, \eqref{2.7}, and
\eqref{2.8} hold, the point $(\lambda,\sigma)$ belongs to one of the
following five pairwise disjoint subsets of the $\lambda\sigma$-plane.
\begin{align*}
 &A:=\{(\lambda,\sigma):0<\sigma\leq\nu(\lambda)\text{ and }1/\lambda\leq\sigma<\mu(\lambda)\}\\
 &B:=\{(\lambda,\sigma):0<\sigma\leq\nu(\lambda)\text{ and }\sigma<1/\lambda\}\\
 &C:=\{(\lambda,\sigma):\mu(\lambda)<\sigma\leq\sigma_0 \text{ and }\lambda>0\}\\
 &D:=\{(\lambda,\sigma):\sigma_0 <\sigma\leq\nu(\lambda)\}\\
 &E:=\{(\lambda,\sigma):\sigma=\mu(\lambda)\text{ and }\lambda\geq\lambda_0 \}.
\end{align*}

Note that $A,B,C$, and $D$ are two dimensional regions in the
$\lambda\sigma$-plane whereas $E$ is the curve separating $A$ and $C$.
(See Figure \ref{fig1}.)

\begin{figure}
\begin{tikzpicture}[xscale=1.5]

\draw [fill=lime, lime] (1.7,1.7) to [out=-50, in=178] (7,0.1) -- (7,0)
-- (0,0);
\draw [fill=orange, orange] (3,3) to [out=-50, in=178] (7,1.1) -- (7,0.10) to
[out=178, in=-50] (1.7,1.7); 
\draw [fill=yellow, yellow] (3,3) -- (7,3) -- (7,1.1) to [out=178,
in=-50] (3,3);
\draw [fill=cyan, cyan] (3,3) -- (7,3) -- (7,7);

\draw [<->] [thick] (0,7.5) -- (0,0) -- (7.5,0);
\draw [thick] (0,0) -- (7,7);
\draw [thick] (1.7,1.7) to [out=-50, in=178] (7,0.10);
\draw [thick] (3,3) to [out=-50, in=178] (7,1.1);
\draw [thick] (3,3) -- (7,3);
\draw [dashed] (0,1) -- (7,1);
\draw [dashed] (0,3) -- (3,3);
\draw [dashed] (3,0) -- (3,3);
\node [right] at (7.5,0) {$\lambda$};
\node [above] at (0,7.5) {$\sigma$};
\node [left] at (0,3) {$\hskip 0.7in \sigma_0$};
\node [below] at (3.05,0) {$\lambda_0$};
\node [right] at (1.4,0.5) {\huge $B$};
\node [right] at (3.2,1.5) {\huge $A$};
\node [right] at (5.0,2.2) {\huge $C$};
\node [right] at (5.0,4.0) {\huge $D$};
\node [right] at (5.6,0.35) {$\sigma=1/\lambda$};
\node [right] at (5.6,1.45) {$\sigma=\mu(\lambda$)};
\node [right] at (3.7,5.0) {$\sigma=\nu(\lambda$)};

\end{tikzpicture}
\caption{Graphs of regions A, B, C, and D.}
\label{fig1}
\end{figure}
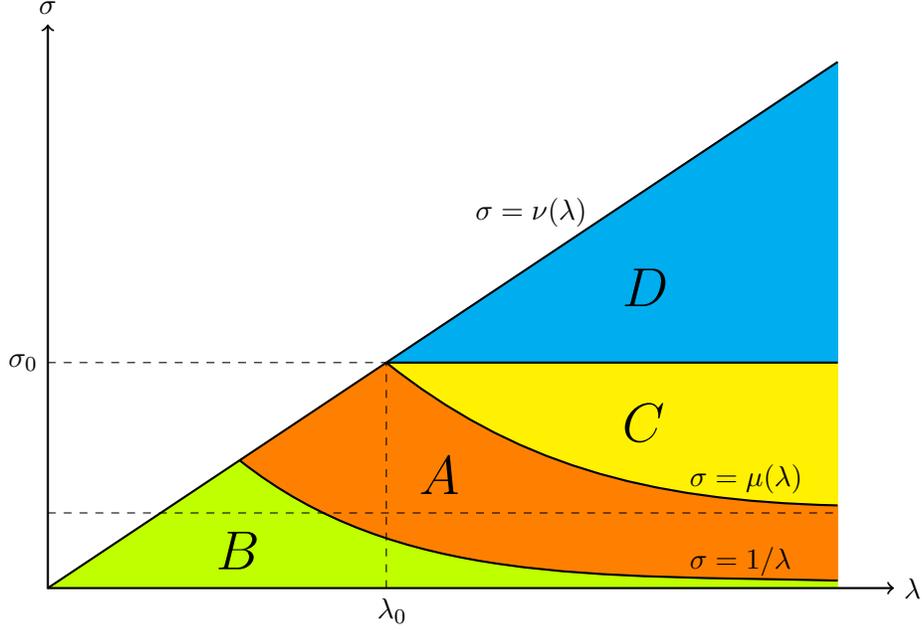

Theorems \ref{thm2.1}--\ref{thm2.6} deal with solutions of
\eqref{2.1}--\eqref{2.3} when \eqref{2.4}--\eqref{2.7} and \eqref{2.8}
hold and $(\lambda,\sigma)$ is in $A,B,C$ or $D$.  We have no results
when $(\lambda,\sigma)\in E$.

The following theorem deals with solutions of \eqref{2.1}--\eqref{2.3}
when \eqref{2.4}--\eqref{2.7} and \eqref{2.8} hold and
$(\lambda,\sigma)\in A$.

\begin{thm}\label{thm2.1}
	Suppose \eqref{2.1}--\eqref{2.7} and \eqref{2.8} hold and
	$$\frac{1}{\lambda}\leq\sigma<\mu(\lambda).$$
	Then
	$$u=v=0\quad\text{ a.e. in }\mathbb{R}^n \times\mathbb{R}.$$
\end{thm}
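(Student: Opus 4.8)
The plan is to show that the pointwise bounds forced by the fractional integral representation are \emph{self-improving} in region $A$, and that the only fixed point of the resulting iteration is the zero bound. Writing $f=(\partial_t-\Delta)^\alpha u\geq 0$ and $g=(\partial_t-\Delta)^\beta v\geq 0$, we have $u=J_\alpha f$ and $v=J_\beta g$ with $f\in X^p$, $g\in X^q$, and by \eqref{2.3} together with property (P3) both $f$ and $g$ vanish for $t<0$. The starting point is to obtain a crude a priori bound: using $f\leq v^\lambda$ and $g\leq u^\sigma$ together with the $L^p$/$L^q$ membership, one derives, via the mapping properties of $J_\alpha$ recorded in \cite{T} (essentially parabolic-scaling estimates for convolution with $\Phi_\alpha$), an initial estimate of the form $u(x,t)\leq C t^{a_0}$ and $v(x,t)\leq C t^{b_0}$ on $\mathbb R^n\times(0,\infty)$ for some exponents $a_0,b_0$ depending on $n,p,q,\alpha,\beta$. (The fact that $f,g$ are supported in $t\geq 0$ is what makes the $t$-power bound meaningful near $t=0^+$.)

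Next I would set up the bootstrap. If $v(x,t)\leq C t^{b}$ then $f(x,t)\leq C t^{\lambda b}$, and plugging into \eqref{I.7} and carrying out the (routine) computation of $\iint \Phi_\alpha(x-\xi,t-\tau)\tau^{\lambda b}\,d\xi\,d\tau = c\, t^{\lambda b+\alpha}$ gives $u(x,t)\leq C t^{\lambda b+\alpha}$; symmetrically $v(x,t)\leq C t^{\sigma a+\beta}$ from $u(x,t)\leq C t^a$. Thus the pair of exponents evolves under the affine map $(a,b)\mapsto(\lambda b+\alpha,\ \sigma a+\beta)$, i.e. after one double step $a\mapsto \lambda\sigma\, a + (\lambda\beta+\alpha)$. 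The condition defining region $A$, namely $\sigma<\mu(\lambda)$ with $\mu$ as in \eqref{2.11}, is precisely the inequality that (after unwinding the definitions of $\mu$ and of the admissible range \eqref{2.8} for $\alpha$) forces the relevant contraction/growth factor to push the exponents to $+\infty$: iterating, $a_k\to\infty$ and $b_k\to\infty$. Since $u,v$ are finite a.e., a bound $u(x,t)\leq C(x) t^{a_k}$ valid for all $k$ with $a_k\to\infty$ forces $u=0$ for $t$ in any bounded interval, hence $u\equiv 0$, and then $g\leq u^\sigma=0$ gives $v=J_\beta g=0$ as well; with \eqref{2.3} this yields $u=v=0$ a.e.\ in $\mathbb R^n\times\mathbb R$. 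The role of the other defining inequality $1/\lambda\leq\sigma$ is to guarantee (together with \eqref{2.7}) that the roles of $u$ and $v$ are genuinely coupled so that the single-variable recursion for $a_k$ is the binding one; it also situates $A$ on the correct side of the hyperbola $\sigma=1/\lambda$ where the product $\lambda\sigma\geq 1$, which is what prevents the iteration from converging to a finite fixed exponent.

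The main obstacle I anticipate is making the iteration \emph{uniform in the constants}: at each step the constant $C$ in $u\leq C t^{a_k}$ is multiplied by a factor coming from the $\Gamma$-function constants in \eqref{I.9} and from the $L^p$–$L^q$ bound on $J_\alpha$, and one must check these multiplicative factors do not blow up faster than the gain from the increasing power of $t$ — equivalently, one should localize to $t\in(0,T)$ for fixed $T$ and track the $T$-dependence, or run the argument with $u(x,t)/t^{a_k}$ bounded in a suitable weighted norm rather than pointwise. A secondary technical point is justifying the interchange needed to even start the bootstrap, i.e. that $v^\lambda$ and $u^\sigma$ lie in spaces to which $J_\alpha$, $J_\beta$ apply; this is where the hypotheses \eqref{2.5}, \eqref{2.6} and the membership \eqref{2.1} enter, and it is presumably handled exactly as the corresponding step in the scalar case \cite{T}. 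Once the uniform iteration is in place, the conclusion $u=v=0$ is immediate.
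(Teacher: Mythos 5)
Your proposal has two genuine gaps, and the more serious one is the step you dismiss as a ``crude a priori bound.'' Membership $u\in Y^p_\alpha$, $v\in Y^q_\beta$ (equivalently $f=(\partial_t-\Delta)^\alpha u\in X^p$, $g=(\partial_t-\Delta)^\beta v\in X^q$) gives only local integrability, and the mapping properties of $J_\alpha$ (Lemma \ref{lem7.2}) yield only a finite gain in the Lebesgue exponent, never a pointwise bound in one step. Upgrading $f,g$ to $L^\infty_{\mathrm{loc}}$ is precisely the heart of the paper's argument (Lemmas \ref{lem4.1}--\ref{lem4.3}): one iterates the exponent gain of $J_\alpha$, $J_\beta$ through the two inequalities in \eqref{3.2}, and the quantitative condition that makes this bootstrap terminate at $X^\infty$ is exactly $\sigma<\mu(\lambda)$ (together with \eqref{2.7}, \eqref{2.8}). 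Your proposal instead spends $\sigma<\mu(\lambda)$ on the wrong step --- you claim it is what ``pushes the exponents to $+\infty$'' in the power-of-$t$ iteration, whereas that iteration $a\mapsto\lambda\sigma a+(\lambda\beta+\alpha)$ diverges iff $\lambda\sigma\ge1$, i.e.\ because of $1/\lambda\le\sigma$ --- and assumes the local boundedness for free. That this cannot be obtained ``softly'' is shown by Theorem \ref{thm2.5}: when $\sigma>\mu(\lambda)$ there are solutions in the same classes $Y^p_\alpha$, $Y^q_\beta$ that are not locally bounded, so any derivation of a pointwise bound must use $\sigma<\mu(\lambda)$ in an essential, iterative way.

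The second gap is in how you close the argument once pointwise bounds are available. A family of bounds $u\le C_k t^{a_k}$ with $a_k\to\infty$ forces $u=0$ only where $t<1$ (and even then only if the constants $C_k$ are controlled, which you flag but do not resolve); for $t>1$ the bound is vacuous, so ``$u=0$ on any bounded interval, hence $u\equiv0$'' does not follow, and some continuation-in-time argument is missing. The paper avoids both issues at once: having $f,g\in X^\infty$, it takes $a\ge0$ to be the first time after which $f$ is nonzero and derives, from $0\le f\le K_1K_2^\lambda\bigl(J_\beta((J_\alpha f)^\sigma)\bigr)^\lambda$ and the elementary identity \eqref{L3.5}, the single inequality
\begin{equation*}
1\le K_1K_2^\lambda B^\lambda\,\|f\|_{L^\infty(\mathbb{R}^n\times(a,T))}^{\lambda\sigma-1}\,(T-a)^{\lambda(\beta+\alpha\sigma)}\qquad\text{for all }T>a,
\end{equation*}
whose right-hand side tends to $0$ as $T\to a^+$ when $\lambda\sigma\ge1$ (Lemma \ref{lem4.4}). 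This yields $f\equiv0$, hence $g\le K_2(J_\alpha f)^\sigma=0$ and $u=J_\alpha f=v=J_\beta g=0$, with no constant tracking and no globalization step. If you want to salvage your exponent-iteration, you would need to (i) prove the $X^\infty$ bound by the integrability bootstrap where $\sigma<\mu(\lambda)$ actually enters, and (ii) run your iteration locally to the right of the first nonzero time of $f$ (or replace it by the one-shot inequality above), rather than globally in $t$.
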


The following theorem deals with solutions of \eqref{2.1}--\eqref{2.3}
when \eqref{2.4}--\eqref{2.7} and \eqref{2.8} hold and
$(\lambda,\sigma)\in B$.

\begin{thm}\label{thm2.2}
	Suppose \eqref{2.1}--\eqref{2.7} and \eqref{2.8} hold and
	\begin{equation}\label{2.13}
	 \sigma<1/\lambda.
	\end{equation}
	Then for all $T>0$ we have
	\begin{equation}\label{2.14}
	 \| u\|_{L^\infty (\mathbb{R}^n \times(0,T))}\leq M^{\frac{\lambda}{1-\lambda\sigma}}_{2}T^{\gamma_2 /\sigma}
	\end{equation}
	and
	\begin{equation}\label{2.15}
	 \| v\|_{L^\infty (\mathbb{R}^n \times(0,T))}\leq M^{\frac{\sigma}{1-\lambda\sigma}}_{1}T^{\gamma_1 /\lambda}
	\end{equation}
	where
	\begin{equation}\label{2.16}
          \gamma_1
          =\frac{(\beta+\alpha\sigma)\lambda}{1-\lambda\sigma},
          \quad\gamma_2 =\frac{(\alpha+\beta\lambda)\sigma}{1-\lambda\sigma},
	\end{equation}
	\begin{equation}\label{2.17}
	 M_1 =\frac{\Gamma(\gamma_1 +1)\Gamma(\gamma_2 +1)^{1/\sigma}}{\Gamma(\alpha+\gamma_1 +1)\Gamma(\beta+\gamma_2 +1)^{1/\sigma}},
	\end{equation}
	\begin{equation}\label{2.18}
	 M_2 =\frac{\Gamma(\gamma_2 +1)\Gamma(\gamma_1 +1)^{1/\lambda}}{\Gamma(\beta+\gamma_2 +1)\Gamma(\alpha+\gamma_1 +1)^{1/\lambda}},
	\end{equation}
	where $\Gamma$ is the Gamma function. 
\end{thm}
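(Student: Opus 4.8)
The plan is to reduce the system to two coupled, purely time-dependent convolution inequalities and then bootstrap twice: first to local $L^\infty$ bounds, then to the sharp ones. Since $u\in Y^p_\alpha$ and $v\in Y^q_\beta$, write $u=J_\alpha f$ and $v=J_\beta g$ with $f:=(\partial_t-\Delta)^\alpha u$, $g:=(\partial_t-\Delta)^\beta v$; then $0\le f\le v^\lambda$ and $0\le g\le u^\sigma$ a.e., and (P3) with \eqref{2.3} gives $f=g=0$ in $\mathbb{R}^n\times(-\infty,0)$. Put $a(t):=\|u(\cdot,t)\|_{L^\infty(\mathbb{R}^n)}$ and $b(t):=\|v(\cdot,t)\|_{L^\infty(\mathbb{R}^n)}$, a priori $[0,\infty]$-valued. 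Since $v\ge0$ we have $f(\xi,\tau)\le v(\xi,\tau)^\lambda\le b(\tau)^\lambda$ a.e., and $\int_{\mathbb{R}^n}(4\pi s)^{-n/2}e^{-|y|^2/(4s)}\,dy=1$ for $s>0$, so \eqref{I.7} and \eqref{I.9} give, for a.e.\ $(x,t)$ with $t>0$,
\[
u(x,t)=\iint_{\mathbb{R}^n\times(0,t)}\Phi_\alpha(x-\xi,t-\tau)f(\xi,\tau)\,d\xi\,d\tau\le\frac{1}{\Gamma(\alpha)}\int_0^t(t-\tau)^{\alpha-1}b(\tau)^\lambda\,d\tau,
\]
and, taking the essential supremum in $x$ and arguing symmetrically for $v$,
\begin{equation}\tag{$\star$}
a(t)\le\frac{1}{\Gamma(\alpha)}\int_0^t(t-\tau)^{\alpha-1}b(\tau)^\lambda\,d\tau,\qquad b(t)\le\frac{1}{\Gamma(\beta)}\int_0^t(t-\tau)^{\beta-1}a(\tau)^\sigma\,d\tau\quad(\text{a.e. }t>0).
\end{equation}
As $\|u\|_{L^\infty(\mathbb{R}^n\times(0,T))}=\|a\|_{L^\infty(0,T)}$, similarly for $v$, and $\gamma_2/\sigma,\gamma_1/\lambda>0$, it suffices to prove $a(t)\le M_2^{\lambda/(1-\lambda\sigma)}t^{\gamma_2/\sigma}$ and $b(t)\le M_1^{\sigma/(1-\lambda\sigma)}t^{\gamma_1/\lambda}$ for a.e.\ $t>0$.

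I expect the main obstacle to be the preliminary fact that $a,b\in L^\infty(0,T)$ (equivalently $u,v\in L^\infty(\mathbb{R}^n\times(0,T))$) for every $T>0$: without it $(\star)$ is vacuous, and since $2p\alpha<n+2$ it does \emph{not} follow from $Y^p_\alpha\subset L^p_{\text{loc}}$ alone. I would establish it by a parabolic bootstrap using the full system. Young's inequality with $\Phi_\alpha\in L^1(\mathbb{R}^n\times(0,T))$ (indeed $\|\Phi_\alpha\|_{L^1(\mathbb{R}^n\times(0,T))}=T^\alpha/\Gamma(\alpha+1)$) gives $u=J_\alpha f\in L^p(\mathbb{R}^n\times(0,T))$ and, likewise, $v\in L^q(\mathbb{R}^n\times(0,T))$. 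Then, from $u\le J_\alpha(v^\lambda)$, $v\le J_\beta(u^\sigma)$ and the smoothing estimates $J_\alpha\colon L^r(\mathbb{R}^n\times(0,T))\to L^{r'}(\mathbb{R}^n\times(0,T))$ with $1/r'=1/r-2\alpha/(n+2)$ — valid since $2p\alpha<n+2$, likewise for $J_\beta$ since $2q\beta<n+2$; these parabolic Hardy--Littlewood--Sobolev bounds follow from the explicit kernels, cf.\ \cite{T} — the reciprocal Lebesgue exponents $(x,y)$ of $(u,v)$ on $\mathbb{R}^n\times(0,T)$ evolve under the affine map $(x,y)\mapsto\bigl(\lambda y-\tfrac{2\alpha}{n+2},\ \sigma x-\tfrac{2\beta}{n+2}\bigr)$, whose linear part $\left(\begin{smallmatrix}0&\lambda\\ \sigma&0\end{smallmatrix}\right)$ has spectral radius $\sqrt{\lambda\sigma}<1$ by \eqref{2.13}. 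Hence the iterates converge to the fixed point $-(1-\lambda\sigma)^{-1}\bigl(\tfrac{2\alpha}{n+2}+\tfrac{2\lambda\beta}{n+2},\,\tfrac{2\beta}{n+2}+\tfrac{2\sigma\alpha}{n+2}\bigr)$, both coordinates \emph{negative}, so after finitely many steps a reciprocal exponent drops to $\le0$ and $u,v\in L^\infty(\mathbb{R}^n\times(0,T))$ (the borderline exponent causes no difficulty, since exponential integrability there feeds into every $L^m$; and once $u^\sigma\in L^\infty(\mathbb{R}^n\times(0,T))$ the second inequality of $(\star)$ gives $b(t)\le\|u^\sigma\|_\infty T^\beta/\Gamma(\beta+1)$). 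This is where \eqref{2.8} and the integrability $u\in Y^p_\alpha$, $v\in Y^q_\beta$ are used; the crude constants produced play no further role.

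Granting $a,b\in L^\infty(0,T)$, set $A_0:=\|a\|_{L^\infty(0,T)}$, $B_0:=\|b\|_{L^\infty(0,T)}$, $p_1^{(0)}:=p_2^{(0)}:=0$, $p_1^{(k+1)}:=\alpha+\lambda p_2^{(k)}$, $p_2^{(k+1)}:=\beta+\sigma p_1^{(k)}$, and
\[
A_{k+1}:=B_k^{\lambda}\,\frac{\Gamma(\lambda p_2^{(k)}+1)}{\Gamma(\alpha+\lambda p_2^{(k)}+1)},\qquad B_{k+1}:=A_k^{\sigma}\,\frac{\Gamma(\sigma p_1^{(k)}+1)}{\Gamma(\beta+\sigma p_1^{(k)}+1)}.
\]
Inserting $\int_0^t(t-\tau)^{\alpha-1}\tau^{\rho}\,d\tau=\Gamma(\alpha)\Gamma(\rho+1)t^{\alpha+\rho}/\Gamma(\alpha+\rho+1)$ ($\rho\ge0$) into $(\star)$, induction on $k$ gives $a(t)\le A_k t^{\,p_1^{(k)}}$ and $b(t)\le B_k t^{\,p_2^{(k)}}$ for a.e.\ $t\in(0,T)$ and all $k\ge0$. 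The exponent recursion is linear with the contractive matrix above, starting from $0$, so $p_1^{(k)}\uparrow p_1:=\tfrac{\alpha+\lambda\beta}{1-\lambda\sigma}=\tfrac{\gamma_2}{\sigma}$ and $p_2^{(k)}\uparrow p_2:=\tfrac{\beta+\sigma\alpha}{1-\lambda\sigma}=\tfrac{\gamma_1}{\lambda}$, whence $\lambda p_2^{(k)}\to\gamma_1$ and $\sigma p_1^{(k)}\to\gamma_2$. Taking logarithms, $(\log A_k,\log B_k)$ obeys an affine recursion with the same contractive linear part and inhomogeneous term converging — by continuity of $\Gamma$ on $(0,\infty)$ — to $\bigl(\log\tfrac{\Gamma(\gamma_1+1)}{\Gamma(\alpha+\gamma_1+1)},\ \log\tfrac{\Gamma(\gamma_2+1)}{\Gamma(\beta+\gamma_2+1)}\bigr)$, hence converges to the fixed point of the limiting affine map; solving this $2\times2$ linear system and comparing with \eqref{2.16}--\eqref{2.18} gives $A_k\to M_2^{\lambda/(1-\lambda\sigma)}$ and $B_k\to M_1^{\sigma/(1-\lambda\sigma)}$. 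Letting $k\to\infty$ for fixed $t\in(0,T)$ in $a(t)\le A_k t^{p_1^{(k)}}$, $b(t)\le B_k t^{p_2^{(k)}}$ yields $a(t)\le M_2^{\lambda/(1-\lambda\sigma)}t^{\gamma_2/\sigma}$ and $b(t)\le M_1^{\sigma/(1-\lambda\sigma)}t^{\gamma_1/\lambda}$ a.e.\ on $(0,T)$; taking $L^\infty(0,T)$ norms gives \eqref{2.14}--\eqref{2.15}. Since $T>0$ is arbitrary, we are done.
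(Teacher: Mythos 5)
Your overall strategy is the same two-stage one the paper uses: first a Lebesgue-exponent bootstrap exploiting the parabolic smoothing of $J_\alpha$, $J_\beta$ to obtain $u,v\in L^\infty(\mathbb{R}^n\times(0,T))$ (the paper's Lemmas \ref{lem4.1}--\ref{lem4.3}), then a purely time-dependent convolution iteration yielding the sharp constants (the paper's Lemma \ref{lem4.4}). Your second stage is correct and differs from the paper only in bookkeeping: you iterate exponents and constants simultaneously starting from the flat bound, whereas the paper first derives a crude bound proportional to $t^{\gamma_1}$ and then iterates only the constant with the exponent frozen at $\gamma_1$; your affine recursion on logarithms does converge to $M_2^{\lambda/(1-\lambda\sigma)}$ and $M_1^{\sigma/(1-\lambda\sigma)}$, matching \eqref{2.16}--\eqref{2.18}.

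The gaps are in the bootstrap. First, you invoke the smoothing map $J_\alpha\colon L^r\to L^{r'}$ with the endpoint gain $1/r-1/r'=2\alpha/(n+2)$; the available estimate (Lemma \ref{lem7.2}, i.e.\ the parabolic Hardy--Littlewood--Sobolev bound) requires the strict inequality $1/r-1/r'<2\alpha/(n+2)$. This is harmless --- replace $\alpha,\beta$ by $\alpha-\varepsilon,\beta-\varepsilon$ as the paper does in Lemma \ref{lem4.2}; since the fixed point of your affine map is strictly negative, a small $\varepsilon$ keeps it negative --- but as written the step is not justified. Second, and more substantively, your affine dynamics on reciprocal exponents is legitimate only while the function being convolved lies in some $L^r$ with $r\ge 1$: if, say, $\lambda>q$, then at the first step $v^\lambda$ is only known to lie in $L^{q/\lambda}$ with $q/\lambda<1$, and no mapping property of $J_\alpha$ applies, so the update $x_1=\lambda y_0-2\alpha/(n+2)$ is not available; you never address this. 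It is reparable: since $p,q\ge1$ and $\lambda\sigma<1$, at most one of the two initial updates can be illegitimate (skip it and retain the a priori exponent), and after one legitimate update one has $\lambda y\le \lambda\sigma/p<1$ and $\sigma x<1$, so all subsequent steps are fine; this clamping is exactly what the paper builds into Lemma \ref{lem4.2} via $p_4=\max\{q,\,p_3/\sigma\}$. Relatedly, your parenthetical claim that the ``borderline exponent'' causes no difficulty because of exponential integrability is unsubstantiated; the clean fix is to perturb $\varepsilon$ so that no iterate lands exactly on exponent $0$, and once one of $u,v$ is in $L^\infty(\mathbb{R}^n\times(0,T))$ the boundedness of the other follows from Lemma \ref{lem7.1}.
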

	
By the following theorem, the bounds \eqref{2.14} and \eqref{2.15} in
Theorem \ref{thm2.2} are optimal.

\begin{thm}\label{thm2.3}
  Suppose \eqref{2.4}--\eqref{2.6} and \eqref{2.13} hold,
\[  
T>0,\quad 0<N_1 <M_1,\quad and \quad 0<N_2 <M_2,
\]
where $M_1$ and $M_2$ are defined in \eqref{2.17} and \eqref{2.18}.
Then there exist solutions
\[
u\in Y^{p}_{\alpha}\cap C(\mathbb{R}^n \times\mathbb{R}) \quad \text{and}\quad
v\in Y^{q}_{\beta}\cap C(\mathbb{R}^n \times\mathbb{R})
\] of
\eqref{2.2}, \eqref{2.3} such that for $0<t<T$ we have
	$$u(0,t)\geq
        N^{\frac{\lambda}{1-\lambda\sigma}}_{2}t^{\gamma_2 /\sigma}
\quad\text{ and }\quad v(0,t)\geq N^{\frac{\sigma}{1-\lambda\sigma}}_{1}t^{\gamma_1 /\lambda}$$
	where $\gamma_1$ and $\gamma_2$ are defined in \eqref{2.16}.
\end{thm}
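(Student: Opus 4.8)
The plan is to construct the solutions explicitly as functions of $t$ alone, i.e.\ seek $u(x,t)=f(t)$ and $v(x,t)=g(t)$ where $f,g:\mathbb{R}\to[0,\infty)$ vanish on $(-\infty,0)$. For such $x$-independent functions the fully fractional heat operator should reduce to a one-dimensional fractional-integration operator in $t$: since $\int_{\mathbb{R}^n}\Phi_\alpha(x-\xi,t-\tau)\,d\xi = \frac{(t-\tau)^{\alpha-1}}{\Gamma(\alpha)}\chi_{(0,\infty)}(t-\tau)$, we get $J_\alpha f(t)=\frac{1}{\Gamma(\alpha)}\int_0^t (t-\tau)^{\alpha-1}f(\tau)\,d\tau = (I^\alpha f)(t)$, the Riemann--Liouville fractional integral. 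The guiding identity is the Beta-integral formula $I^\alpha(t^{\gamma})=\frac{\Gamma(\gamma+1)}{\Gamma(\alpha+\gamma+1)}t^{\alpha+\gamma}$. So if we try $u(x,t)=a\,t^{\gamma_2/\sigma}\chi_{(0,\infty)}$ and $v(x,t)=b\,t^{\gamma_1/\lambda}\chi_{(0,\infty)}$, then the exponent-matching in the two inequalities forces, via \eqref{2.16}, that $(\partial_t-\Delta)^\alpha u$ is a constant multiple of $t^{\gamma_2/\sigma-\alpha}$ and $v^\lambda$ is a multiple of $t^{\gamma_1}=t^{\gamma_2/\sigma-\alpha}$ — the exponents agree precisely because of the definition of $\gamma_1,\gamma_2$ in \eqref{2.16} (one checks $\gamma_1 = \gamma_2/\sigma-\alpha$ and symmetrically $\gamma_2=\gamma_1/\lambda-\beta$). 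It then remains to choose the constants $a,b$ so that $0\le(\partial_t-\Delta)^\alpha u\le v^\lambda$ and $0\le(\partial_t-\Delta)^\beta v\le u^\sigma$, which becomes a pair of algebraic inequalities in $a,b$; solving the corresponding system of equalities gives $a=N_2^{\lambda/(1-\lambda\sigma)}$-type expressions, and the strict inequalities $N_i<M_i$ give the slack needed to turn equalities into the required $\le$.

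The steps, in order. First, verify the reduction $J_\alpha f = I^\alpha f$ for $x$-independent $f\in X^p$, and note that $t^{\gamma_2/\sigma}\chi_{(0,\infty)}\in X^p$ and lies in $Y^p_\alpha$ (it is $I^\alpha$ of $c\,t^{\gamma_2/\sigma-\alpha}\chi_{(0,\infty)}$, which is in $X^p$ provided the exponent $\gamma_2/\sigma-\alpha = \gamma_1 > -1/p$; this should hold automatically in region $B$, and must be checked). Second, compute $(\partial_t-\Delta)^\alpha u$ and $(\partial_t-\Delta)^\beta v$ for the trial functions using the Beta formula, getting explicit constants involving $\Gamma(\gamma_i+1)/\Gamma(\cdot+\gamma_i+1)$. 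Third, set up the two inequalities; they read (schematically) $a\,\Gamma(\gamma_2+1)/\Gamma(\alpha+\gamma_2+1)\le b^\lambda$ and $b\,\Gamma(\gamma_1+1)/\Gamma(\beta+\gamma_1+1)\le a^\sigma$. Wait — I need to be careful about which $\Gamma$-ratios appear; I expect the combination that emerges is exactly $M_1,M_2$ as defined in \eqref{2.17}--\eqref{2.18}, with the $1/\sigma$ and $1/\lambda$ powers arising from solving the coupled system. Fourth, solve the equality version for $(a,b)$ in terms of $(N_1,N_2)$ and verify that replacing $N_i$ by the strictly larger $M_i$ would give equality, so $N_i<M_i$ yields the strict inequality direction needed for $\le$ (the nonnegativity $0\le$ is immediate since everything is a positive constant times a nonnegative power of $t$). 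Fifth, check the initial condition \eqref{2.3} (immediate from the $\chi_{(0,\infty)}$ factor and property (P3), or directly), continuity on $\mathbb{R}^n\times\mathbb{R}$ (the exponents $\gamma_2/\sigma$ and $\gamma_1/\lambda$ are positive, so $f,g$ are continuous and vanish at $t=0$), and finally read off the lower bounds $u(0,t)=a\,t^{\gamma_2/\sigma}$, $v(0,t)=b\,t^{\gamma_1/\lambda}$ for $0<t<T$ — in fact this gives equality, which is stronger than the claimed $\ge$, so the restriction $t<T$ is not even needed for these particular solutions (though it may be needed if one also wants, e.g., a matching upper bound on $(0,T)$; I'd keep it as stated).

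The main obstacle I anticipate is bookkeeping: making sure the exponent identities $\gamma_1=\gamma_2/\sigma-\alpha$ and $\gamma_2=\gamma_1/\lambda-\beta$ (equivalently \eqref{2.16}) are used correctly so that the powers of $t$ on the two sides of each inequality genuinely match — this is where the hypothesis $\sigma<1/\lambda$ (so $1-\lambda\sigma>0$) is essential, both for $\gamma_1,\gamma_2$ to be well-defined positive numbers and for the $\frac{\lambda}{1-\lambda\sigma}$, $\frac{\sigma}{1-\lambda\sigma}$ exponents on $N_1,N_2$ to be positive. A secondary technical point is confirming membership in $Y^p_\alpha$ and $Y^q_\beta$: I must check that the "pre-image" powers $t^{\gamma_1}\chi_{(0,\infty)}$ and $t^{\gamma_2}\chi_{(0,\infty)}$ lie in $X^p$ and $X^q$ respectively, i.e.\ are locally $L^p$ (resp.\ $L^q$) near $t=0$; since $\gamma_1,\gamma_2>0$ this is trivial, so actually I expect this step to be painless. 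Everything else — the Beta integral, solving the $2\times 2$ algebraic system, verifying continuity and the initial condition — is routine.
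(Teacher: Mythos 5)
There is a genuine gap, and it is fatal to the approach as written: your trial solutions are independent of $x$, and such functions cannot lie in $Y^p_\alpha$ or $Y^q_\beta$. By \eqref{I.6}, $X^p=\bigcap_T L^p(\mathbb{R}^n\times\mathbb{R}_T)$ requires $p$-integrability over \emph{all} of $\mathbb{R}^n$ in the space variable, not merely local integrability near $t=0$ as you assert in your ``secondary technical point.'' A nonzero function of $t$ alone, such as your pre-image $c\,t^{\gamma_1}\chi_{(0,\infty)}(t)=(\partial_t-\Delta)^\alpha u$, has infinite $L^p(\mathbb{R}^n\times(0,T))$ norm for every $p\in[1,\infty)$, so $u\notin Y^p_\alpha=J_\alpha(X^p)$. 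The paper makes exactly this point in Remark \ref{rem4.1}: the pure-power pair $F,G$ (which is precisely your ansatz, and which does satisfy the exact equalities \eqref{8.1} via the Beta formula \eqref{L3.5} — your exponent identities $\gamma_1=\gamma_2/\sigma-\alpha$, $\gamma_2=\gamma_1/\lambda-\beta$ are correct) satisfies $F,G\notin X^p$ for all $p\geq 1$. Your own remark that you obtain equality with the $M_i$-type constants for \emph{all} $t>0$, so that ``the restriction $t<T$ is not even needed,'' is a symptom of the problem: admissible solutions cannot realize the exact power profile, which is why the theorem is stated with the strict inequalities $N_i<M_i$.

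The real work, which your outline omits, is how to deform $F,G$ into admissible functions without destroying the system \eqref{1.2}. In the paper's proof of Theorem \ref{thm3.3} this is done by multiplying the time profiles by a temporal cutoff $\psi_\delta$ (so the data are supported in $t\le 1+\delta$) and by a spatially decaying factor $\varphi(\varepsilon x)=e^{-(\sqrt{1+|\varepsilon x|^2}-1)}$ raised to suitable powers, which puts $f,g$ in $L^p\cap C$ and $L^q\cap C$. The price is that $J_\alpha$ and $J_\beta$ no longer act as one-dimensional Riemann--Liouville integrals: the heat-kernel convolution in $x$ only recovers a fraction $I(\gamma)$ of the mass on $|\xi-x|<\gamma\sqrt2$, up to a factor $e^{-\varepsilon\gamma\sqrt2}$ from the slowly varying spatial weight, and the time cutoff costs additional errors of size $C\delta^\alpha$, $C\delta^\beta$. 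These losses are absorbed by choosing constants $a_1,a_2<1$ (this is exactly where the slack $N_i<M_i$ is consumed, via \eqref{PT2.1}--\eqref{PT2.4}), then $\delta$ small, $\gamma$ large, and $\varepsilon$ small so that \eqref{PT2.11}--\eqref{PT2.12} exceed one. Without some such modification and error analysis, your construction does not produce elements of $Y^p_\alpha\cap C$ and $Y^q_\beta\cap C$, so the proposal does not prove the theorem.
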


Although the estimates \eqref{2.14} and \eqref{2.15} are optimal there
still remains the question as to whether there is a \emph{single}
solution pair $u,v$ which has the same size as these estimates as
$t\to\infty$.  By the following theorem there is such a solution pair.

\begin{thm}\label{thm2.4}
  Suppose \eqref{2.4}--\eqref{2.6} and \eqref{2.13} hold.  Then there
  exist $N>0$ and solutions 
\[u\in Y^{p}_{\alpha} \quad\text{and}\quad
  v\in Y^{q}_{\beta}\] 
of \eqref{2.2}, \eqref{2.3} such that for
  $|x|^2 <t$ we have
  \[
u(x,t)\geq Nt^{\gamma_2 /\sigma}\quad\text{ and }\quad v(x,t)\geq
  Nt^{\gamma_1 /\lambda}
\]
  where $\gamma_1$ and $\gamma_2$ are defined in \eqref{2.16}.
\end{thm}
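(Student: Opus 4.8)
The plan is to produce the pair $(u,v)$ explicitly by applying $J_\alpha$ and $J_\beta$ to suitable power functions supported in the parabolic cone $\Omega:=\{(x,t):|x|^2<t\}$. The guiding computation is that, for $b>-1$,
\[
J_\alpha\bigl(\tau^{b}\chi_{\{\tau>0\}}\bigr)(x,t)=\frac{1}{\Gamma(\alpha)}\int_0^t(t-\tau)^{\alpha-1}\tau^{b}\,d\tau=\frac{\Gamma(b+1)}{\Gamma(\alpha+b+1)}\,t^{\alpha+b},
\]
which uses $\int_{\mathbb R^n}\Phi_\alpha(x-\xi,t-\tau)\,d\xi=(t-\tau)^{\alpha-1}/\Gamma(\alpha)$ and the Beta integral; the same Gamma‑function ratios appear in $M_1,M_2$. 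Scaling considerations then suggest looking for exponents $a_1,a_2$ satisfying $a_1=(\beta+a_2)\lambda$ and $a_2=(\alpha+a_1)\sigma$; since $\lambda\sigma<1$ these two relations have the unique solution $a_1=\gamma_1$, $a_2=\gamma_2$, and moreover one computes from \eqref{2.16} the identities $\alpha+\gamma_1=\gamma_2/\sigma$ and $\beta+\gamma_2=\gamma_1/\lambda$. Accordingly I would set
\[
f:=c_1\,t^{\gamma_1}\chi_{\Omega},\qquad g:=c_2\,t^{\gamma_2}\chi_{\Omega},\qquad u:=J_\alpha f,\qquad v:=J_\beta g,
\]
with constants $c_1,c_2>0$ to be chosen. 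Since $\gamma_1,\gamma_2>0$, the conditions $f\in X^p$ and $g\in X^q$ reduce to the convergence of $\int_0^T t^{n/2+p\gamma_1}\,dt$ and $\int_0^T t^{n/2+q\gamma_2}\,dt$, which is clear; hence by (P1), by \eqref{I.8}, and by the definition of $(\partial_t-\Delta)^\alpha$ as the inverse of $J_\alpha$ we obtain $u\in Y^p_\alpha$, $v\in Y^q_\beta$, $(\partial_t-\Delta)^\alpha u=f\ge0$, $(\partial_t-\Delta)^\beta v=g\ge0$, and, by (P3), $u=v=0$ in $\mathbb R^n\times(-\infty,0)$.

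The heart of the argument is a pointwise lower bound for $u$ and $v$ on $\Omega$. For $(x,t)\in\Omega$ I would restrict the integral defining $v=J_\beta g$ to the set where $t/4<\tau<t/2$ and $|\xi|^2<\tau$; on this set $g(\xi,\tau)=c_2\tau^{\gamma_2}$, one has $t/2<t-\tau<3t/4$, and $|x-\xi|\le\sqrt t+\sqrt\tau<2\sqrt t$ gives $|x-\xi|^2/(4(t-\tau))<2$, so that $\Phi_\beta(x-\xi,t-\tau)\ge c(n,\beta)\,t^{\beta-1-n/2}$. Integrating in $(\xi,\tau)$ (the $\xi$‑integral contributes a factor comparable to $\tau^{n/2}$ and the $\tau$‑integral a factor comparable to $t^{n/2+\gamma_2+1}$) yields
\[
v(x,t)\ge c_2\,K_\beta\,t^{\,\beta+\gamma_2}=c_2\,K_\beta\,t^{\,\gamma_1/\lambda}\qquad\text{on }\Omega
\]
for an explicit $K_\beta=K_\beta(n,\beta,\gamma_2)>0$, and by the symmetric computation $u(x,t)\ge c_1\,K_\alpha\,t^{\,\alpha+\gamma_1}=c_1\,K_\alpha\,t^{\,\gamma_2/\sigma}$ on $\Omega$.

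It remains to verify the differential inequalities and to fix $c_1,c_2$. Since $f$ and $g$ vanish outside $\Omega$, and $u,v\ge0$ everywhere, it is enough to check $f\le v^{\lambda}$ and $g\le u^{\sigma}$ on $\Omega$; there the lower bounds give $v^{\lambda}\ge c_2^{\lambda}K_\beta^{\lambda}\,t^{\gamma_1}$ and $u^{\sigma}\ge c_1^{\sigma}K_\alpha^{\sigma}\,t^{\gamma_2}$, so the inequalities hold provided $c_1\le c_2^{\lambda}K_\beta^{\lambda}$ and $c_2\le c_1^{\sigma}K_\alpha^{\sigma}$. Choosing $c_2:=c_1^{\sigma}K_\alpha^{\sigma}$ reduces the remaining requirement to $c_1^{\,1-\lambda\sigma}\le K_\alpha^{\lambda\sigma}K_\beta^{\lambda}$, which, because $1-\lambda\sigma>0$ by \eqref{2.13}, holds for every sufficiently small $c_1>0$. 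Fixing such a $c_1$ and taking $N:=\min\{c_1K_\alpha,\ c_2K_\beta\}$ completes the proof. I expect the only genuine obstacle to be the lower‑bound step: one has to choose the sub‑cone of integration carefully so that the Gaussian factor in $\Phi_\beta$ (resp. $\Phi_\alpha$) stays bounded below while the remaining integral still produces \emph{exactly} the power $t^{\beta+\gamma_2}$ (resp. $t^{\alpha+\gamma_1}$); the rest is routine bookkeeping with Gamma‑function constants and the elementary identities linking $\gamma_1,\gamma_2$ to $\gamma_1/\lambda,\gamma_2/\sigma$.
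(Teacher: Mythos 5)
Your proposal is correct and follows essentially the same route as the paper: the paper's proof of the $J_\alpha$-version (Theorem \ref{thm3.4}) likewise takes $f,g$ to be the powers $t^{\gamma_1},t^{\gamma_2}$ cut off to $\Omega=\{|x|^2<t\}$, obtains $J_\alpha f\geq C t^{\gamma_2/\sigma}$ and $J_\beta g\geq Ct^{\gamma_1/\lambda}$ on $\Omega$ via the Gaussian lower bound of Lemma \ref{lem7.4} (which your sub-cone estimate reproves inline), and then fixes the multiplicative constants exactly as you do, using $1-\lambda\sigma>0$, before passing to $u=J_\alpha f$, $v=J_\beta g$ through the equivalence of Section \ref{sec3}.
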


According to the following theorem, if $(\lambda,\sigma)\in C\cup D$
then there exist bounds as $t\to0^+$ for solutions of
\eqref{2.1}--\eqref{2.3} in neither the pointwise (i.e. $L^\infty$)
sense nor in the $L^r$ sense for certain values of $r$.  Moreover by
Theorem \ref{thm2.6} the same is true as $t\to\infty$.

\begin{thm}\label{thm2.5}
	Suppose \eqref{2.4}--\eqref{2.7} and \eqref{2.8} hold,
	\begin{equation}\label{2.18.1}
	 \sigma>\mu(\lambda),
	\end{equation}
	$r\in(p,\infty]$, and $s\in(s_0 ,\infty]$ where
	\[s_0 =\max\{q,q\sigma_0/\sigma\}=
	  \begin{cases}
	   q\sigma_0/\sigma & \text{if }\sigma\leq\sigma_0\\
	   q & \text{if }\sigma>\sigma_0 .
	  \end{cases}\]
	Then there exist solutions
	\begin{equation}\label{2.19}
	 u\in Y^{p}_{\alpha}\quad\text{ and }\quad v\in Y^{q}_{\beta}
	\end{equation}
	of the initial value problem \eqref{2.2}, \eqref{2.3} and a
        sequence $\{t_j \}\subset(0,1)$ such that
        $\lim_{j\to\infty}t_j =0$ and
	\begin{equation}\label{2.20}
	 \| v^\lambda \|_{L^r (R_j )}=\|(\partial_t -\Delta)^\alpha u\|_{L^r (R_j )}=\infty
	\end{equation}
	\begin{equation}\label{2.21}
	 \| u^\sigma \|_{L^s (R_j )}=\|(\partial_t -\Delta)^\beta v\|_{L^s (R_j )}=\infty
	\end{equation}
	for $j=1,2,\dots$, where
	\begin{equation}\label{2.22}
	 R_j =\{(x,t)\in\mathbb{R}^n \times\mathbb{R}:|x|<\sqrt{t_j}\text{ and }t_j <t<2t_j \}.
	\end{equation}
\end{thm}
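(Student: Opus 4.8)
The idea is to realize $u$ and $v$ as fractional heat potentials of nonnegative forcings that carry a prescribed singularity inside each $R_j$, the scale of the $j$-th bump going to zero.

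\emph{Reduction.} Write $u=J_\alpha f$, $v=J_\beta g$ with $0\le f\in X^p$, $0\le g\in X^q$ vanishing on $\mathbb R^n\times(-\infty,0)$. By (P1)--(P3) and the fact that $(\partial_t-\Delta)^\alpha$, $(\partial_t-\Delta)^\beta$ are the inverses of $J_\alpha$, $J_\beta$, any such $u,v$ lie in $Y^p_\alpha$, $Y^q_\beta$, satisfy \eqref{2.3}, and have $(\partial_t-\Delta)^\alpha u=f$, $(\partial_t-\Delta)^\beta v=g$; thus \eqref{2.2} becomes $0\le f\le (J_\beta g)^\lambda$ and $0\le g\le (J_\alpha f)^\sigma$ in $\mathbb R^n\times\mathbb R$, and since then $f\le v^\lambda$, $g\le u^\sigma$, the conclusions \eqref{2.20}, \eqref{2.21} follow as soon as $\|f\|_{L^r(R_j)}=\|g\|_{L^s(R_j)}=\infty$ for every $j$. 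So everything reduces to constructing such an $(f,g)$.

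\emph{Construction.} Pick a geometric sequence $t_j\downarrow 0$ (so the $R_j$ are pairwise disjoint), set $P_j=(0,\tfrac32 t_j)$, write $d_j(x,t)=(|x|^2+|t-\tfrac32 t_j|)^{1/2}$ for parabolic distance to $P_j$, and let
\[
C_j=\bigl\{(x,t):\ |x|^2< t-\tfrac32 t_j<\tfrac1{16}t_j\bigr\}\subset R_j
\]
be the forward parabolic paraboloid with vertex $P_j$. For exponents $\theta,\eta\in(0,n+2)$ with $\theta>2\alpha$, $\eta>2\beta$, and constants $a,b>0$ to be fixed, put $f=\sum_j a\,d_j^{-\theta}\chi_{C_j}$ and $g=\sum_j b\,d_j^{-\eta}\chi_{C_j}$. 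Since the $C_j$ are disjoint, on $C_j$ one has $f=a\,d_j^{-\theta}$ and $g=b\,d_j^{-\eta}$; hence $f,g$ vanish for $t<0$, $f\in X^p\iff\theta p<n+2$ (and then $\|f\|_{L^p(\mathbb R^n\times\mathbb R_T)}^p\asymp\sum_{t_j<T}a^p t_j^{(n+2-\theta p)/2}<\infty$ because $t_j$ decays geometrically), $g\in X^q\iff\eta q<n+2$, while $\|f\|_{L^r(R_j)}=\infty\iff\theta r\ge n+2$ and $\|g\|_{L^s(R_j)}=\infty\iff\eta s\ge n+2$.

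\emph{Potential estimates and choice of exponents.} Because $\Phi_\alpha$ is parabolically homogeneous of degree $2\alpha-n-2$, $J_\alpha$ is an anisotropic Riesz potential of order $2\alpha$: a scaling computation around $P_j$ gives $J_\alpha(d_j^{-\theta}\chi_{C_j})(x,t)\gtrsim d_j(x,t)^{-(\theta-2\alpha)}$ on $C_j$, with constant uniform in $j$. Hence on $C_j$, $(J_\alpha f)^\sigma\gtrsim a^\sigma d_j^{-(\theta-2\alpha)\sigma}$ and $(J_\beta g)^\lambda\gtrsim b^\lambda d_j^{-(\eta-2\beta)\lambda}$, so $g\le (J_\alpha f)^\sigma$ and $f\le (J_\beta g)^\lambda$ hold provided
\[
\eta\le(\theta-2\alpha)\sigma,\qquad \theta\le(\eta-2\beta)\lambda ,
\]
the constants $a,b$ being then fixed by solving the resulting pair of scalar inequalities (consistent since $\lambda\sigma>1$ throughout $C\cup D$). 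These two exponent conditions force $\theta\ge-2\gamma_1$ and $\eta\ge-2\gamma_2$, with $\gamma_1,\gamma_2<0$ as in \eqref{2.16}. It remains to choose $\theta,\eta$ meeting, besides these, the lower bounds $\theta\ge(n+2)/r$, $\eta\ge(n+2)/s$ and $\theta\ge 2\alpha+(n+2)/(\sigma s)$ (the last also yielding $\|(J_\alpha f)^\sigma\|_{L^s(R_j)}=\infty$ from \eqref{2.21}), together with the upper bounds $\theta<(n+2)/p$, $\eta<(n+2)/q$, and $\eta\le(\theta-2\alpha)\sigma$. A direct computation with \eqref{2.10}--\eqref{2.12}, using that throughout $C\cup D$ one has $\lambda\ge\lambda_0$, $-\gamma_1<\tfrac{n+2}{2p}$ and $-\gamma_2<\tfrac{n+2}{2q}$, shows that admissible $\theta,\eta$ exist precisely when $\sigma>\mu(\lambda)$, $r>p$ and $s>s_0$ — i.e. exactly under the hypotheses. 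Fixing such $\theta,\eta$ and then $a,b$ completes the proof.

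\emph{Main obstacle.} The essential difficulty is the clash between the two-sided nature of the system \eqref{2.2} and the causal (one-sided-in-time) nature of $J_\alpha,J_\beta$: a genuine two-sided point singularity of $f$ at $P_j$ would not force $J_\alpha f$ to be singular at $P_j$ from the side $t<\tfrac32 t_j$, so $g$'s singularity there could not be dominated by $(J_\alpha f)^\sigma$. This is what dictates the forward supports $C_j$, and the technical heart is then the quantitative lower bound $J_\alpha(d_j^{-\theta}\chi_{C_j})\gtrsim d_j^{-(\theta-2\alpha)}$ on $C_j$ for $2\alpha<\theta<n+2$ — an anisotropic Riesz-potential estimate established by parabolic rescaling about $P_j$ (one must in particular check that the resulting scaling integral is finite and positive in the forward paraboloid directions, which is where the exponent bound $n+2$, rather than $n$, enters). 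Everything else is exponent bookkeeping, in the course of which the critical curve $\mu(\lambda)$ and the threshold $s_0$ arise.
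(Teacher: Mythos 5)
Your construction is correct, but it follows a genuinely different route from the paper's. The paper also passes to the equivalent $J_\alpha$ problem \eqref{1.2}, \eqref{1.3} (Theorem \ref{thm3.5}), but it builds $f=f_0+\sum_j f_j$, $g=g_0+\sum_j g_j$ from one \emph{global} component $f_0=t^{-\xi_1}\raisebox{2pt}{$\chi$}_{\{|x|^2<t<1\}}$ (singular as $t\to0^+$, Lemma \ref{lem7.6}) plus \emph{backward}-paraboloid components $f_j=(T_j-t)^{-\xi_1}\raisebox{2pt}{$\chi$}_{\Omega_j}$ singular at the top time $T_j$ (Lemma \ref{lem7.7}); since $J_\alpha f_j$ is only large on the upper part $\Omega_j^+$, the paper must split each $\Omega_j$, use the global potentials $J_\alpha f_0$, $J_\beta g_0$ to dominate $f_j,g_j$ on the lower part, and pass to subsequences to absorb constants. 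You instead use pairwise disjoint bumps supported in forward paraboloids $C_j\subset R_j$ with the singularity at the \emph{bottom} vertex $P_j$, so each bump is self-consistent: the causal kernel immediately "sees" the singular mass, and the lower bound $J_\alpha(d_j^{-\theta}\raisebox{2pt}{$\chi$}_{C_j})\gtrsim d_j^{-(\theta-2\alpha)}$ on $C_j$ is exactly the paper's Lemma \ref{lem7.6} after a time translation and truncation (uniform in $j$ by parabolic scaling, as you say), with no cross-comparisons between bumps, no global component, and no subsequence extraction; also, you treat arbitrary $r>p$, $s>s_0$ directly, whereas the paper first reduces to $r,s$ near the endpoints via $|R_j|<\infty$. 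The trade-off is that your exponent bookkeeping coincides, after halving ($\theta=2\xi$, $\eta=2\eta'$), with the geometry of Remark \ref{rem5.1}: the feasibility of $\theta\le(\eta-2\beta)\lambda$, $\eta\le(\theta-2\alpha)\sigma$, $\theta<\tfrac{n+2}{p}$, $\eta<\tfrac{n+2}{q}$, $\theta\ge\tfrac{n+2}{r}$, $\eta\ge\tfrac{n+2}{s}$ is precisely the statement that the open region of Figure \ref{fig2} accumulates at $P_0$, i.e.\ \eqref{4.25} ($\eta_2<\eta_3$, equivalent to $\sigma>\mu(\lambda)$) together with \eqref{4.24} ($\eta_2<\tfrac{n+2}{2q}$, which needs the strict inequality $\lambda>\lambda_0$ coming from \eqref{1.5} and $\sigma>\mu(\lambda)$ — your "$\lambda\ge\lambda_0$" should be strict, and the facts $-\gamma_1<\tfrac{n+2}{2p}$, $-\gamma_2<\tfrac{n+2}{2q}$ alone would not give $\eta_2<\tfrac{n+2}{2q}$); with that small correction your "direct computation" is indeed the paper's Remark \ref{rem5.1}. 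One further remark: the paper's backward-singular bumps are what carry over verbatim to the blow-up-at-infinity result (Theorem \ref{thm3.6}), where your vertex bumps would need modification since the bound $d_j\le 1$ you use to absorb the exponent gap fails when $t_j\to\infty$.
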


Since \eqref{2.19} implies \eqref{2.20} and \eqref{2.21} are not true
for $r=p$ and $s=q$ respectively, we see that \eqref{2.20} is optimal
when $(\lambda,\sigma)\in C\cup D$ and \eqref{2.21} is optimal when
$s_0 =q$ (i.e. when $(\lambda,\sigma)\in D$).

\begin{thm}\label{thm2.6}
  Suppose \eqref{2.4}--\eqref{2.7}, \eqref{2.8}, and \eqref{2.18.1}
  hold.  Let
	\[r_0
        =\frac{(n+2)(\lambda\sigma-1)}{2(\beta+\alpha\sigma)\lambda}\quad\text{
          and }\quad s_0
        =\frac{(n+2)(\lambda\sigma-1)}{2(\alpha+\beta\lambda)\sigma}.\]
        Then $r_0 >p$, $s_0 >q$, and for each 
\[r\in[r_0 ,\infty] \quad\text{and}\quad
        s\in[s_0 ,\infty]\] 
there exist solutions
        $u\in Y^{p}_{\alpha}$ and $v\in Y^{q}_{\beta}$ of the initial
        value problem \eqref{2.2}, \eqref{2.3} and a sequence
        $\{t_j \}\subset(1,\infty)$ such that
        $\lim_{j\to\infty}t_j =\infty$ and $u$ and $v$ satisfy
        \eqref{2.20} and \eqref{2.21} for $j=1,2,\dots$, where
        $R_j$ is given by \eqref{2.22}.
\end{thm}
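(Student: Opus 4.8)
The plan is to mirror the construction used for Theorem \ref{thm2.5} but to rescale the elementary "bump" solutions so that the blow-up regions $R_j$ are pushed out to $t\to\infty$ rather than pulled in to $t\to0^+$. First I would verify the two inequalities $r_0>p$ and $s_0>q$. These are purely algebraic: writing out $r_0$ and $s_0$ and clearing denominators, the condition $r_0>p$ becomes $(n+2)(\lambda\sigma-1)>2p(\beta+\alpha\sigma)\lambda$, which, after substituting the definition \eqref{2.11} of $\mu$, is exactly the hypothesis \eqref{2.18.1} that $\sigma>\mu(\lambda)$; the inequality $s_0>q$ follows similarly, using \eqref{2.7} together with \eqref{2.18.1} (this is where the normalization \eqref{2.7} is consumed — it guarantees the $v$-side exponent is still at least $q$). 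The numbers $r_0,s_0$ are precisely the scaling-critical Lebesgue exponents: a self-similar profile of the size dictated by Theorems \ref{thm2.3}--\ref{thm2.4} has $(\partial_t-\Delta)^\alpha u$ lying in $L^r_{\mathrm{loc}}$ near a point exactly when $r<r_0$, so $r_0,s_0$ are the natural borderline at which integrability must fail on the dyadic boxes.

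Next I would build the solution pair. The idea is to take a sequence of space-time cutoff bumps $\varphi_j$, $\psi_j$ supported in enlargements of the boxes $R_j$ (with $t_j\to\infty$, say $t_j=4^j$), set $u=\sum_j J_\alpha(\varphi_j)$ and $v=\sum_j J_\beta(\psi_j)$, and choose the heights and the shapes of $\varphi_j,\psi_j$ so that: (a) the coupled inequalities $0\le(\partial_t-\Delta)^\alpha u=\sum\varphi_j\le v^\lambda$ and $0\le(\partial_t-\Delta)^\beta v=\sum\psi_j\le u^\sigma$ hold pointwise, using that on the support of $\varphi_j$ the function $v$ is bounded below by the contribution of $\psi_j$ through the positive kernel $\Phi_\beta$ and the lower heat-kernel estimates from \cite{T}; (b) $u\in Y^p_\alpha$ and $v\in Y^q_\beta$, i.e. $\sum\varphi_j\in X^p$ and $\sum\psi_j\in X^q$, which holds because $p<r_0$ and $q<s_0$ leave room to make the $L^p$ (resp.\ $L^q$) norms of the individual bumps summable while still forcing the $L^r$ (resp.\ $L^s$) norm of each single bump to be $+\infty$ for $r\ge r_0$ (resp.\ $s\ge s_0$); (c) the initial condition \eqref{2.3} holds, which is automatic since every $\varphi_j,\psi_j$ is supported in $t>0$, so by property (P3) $u=v=0$ for $t<0$. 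Making $\varphi_j$ genuinely non-$L^{r_0}$ on $R_j$ requires building in a mild local singularity — e.g.\ $\varphi_j$ behaving like $|x-x_j|^{-(n+2)/r_0}$ times a power of $t$ near an interior point — and then checking that $J_\alpha$ of such a bump still dominates a power of itself so the nonlinear coupling survives.

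The main obstacle I expect is the simultaneous balancing in step (b)--(c): the exponents on the two sides of the system are rigidly linked, so the height of $\varphi_j$ forces the height of $\psi_j$ (via $v^\lambda\ge$ something), which in turn forces the height of $\varphi_{j}$ back again through $u^\sigma$, and one must check this loop is consistent — this is exactly the role of the condition $\sigma>\mu(\lambda)$, which makes the feedback "expansive" in the right direction so that finite-energy ($Y^p_\alpha\times Y^q_\beta$) bumps can nonetheless be non-integrable at the critical exponent. Concretely, after choosing $t_j=4^j$ and the bump heights as suitable powers of $t_j$ (determined by the scaling exponents $\gamma_1/\lambda$, $\gamma_2/\sigma$ from \eqref{2.16}), one reduces everything to: (i) a geometric-series estimate showing $\sum_j\|\varphi_j\|_{L^p(\mathbb R^n\times\mathbb R_T)}<\infty$ for every $T$, and symmetrically for $\psi_j$; (ii) a lower bound $\|J_\alpha\varphi_j\|^\sigma\gtrsim\psi_j$ on $\mathrm{supp}\,\psi_j$ and $\|J_\beta\psi_j\|^\lambda\gtrsim\varphi_j$ on $\mathrm{supp}\,\varphi_j$, obtained from the Gaussian lower bound on $\Phi_\alpha,\Phi_\beta$ over the box $R_j$; and (iii) the observation that, since each bump already contains a $|x-x_j|^{-(n+2)/r_0}$-type factor and $J_\alpha$ is a positive convolution, $(\partial_t-\Delta)^\alpha u=\varphi_j$ restricted to $R_j$ fails to be in $L^r$ for $r\ge r_0$, giving \eqref{2.20}, and likewise \eqref{2.21}. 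The disjointness of the dilated supports (guaranteed by $t_j=4^j$) keeps the cross terms in $u^\sigma$ and $v^\lambda$ from interfering, so the verification decouples into these per-$j$ estimates.
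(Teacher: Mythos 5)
Your overall strategy (bumps pushed out to $t\to\infty$ with self-similar heights, criticality of $r_0,s_0$, initial condition via (P3)) points in the right direction, and your algebra for $r_0>p$ is correct ($s_0>q$ does indeed use \eqref{2.7}; the paper gets both at once by noting that the lines $\xi=(\eta-\beta)\lambda$ and $\eta=(\xi-\alpha)\sigma$ of Remark \ref{rem5.1} intersect at $P_4=(\tfrac{n+2}{2r_0},\tfrac{n+2}{2s_0})$ with $\xi_4<\tfrac{n+2}{2p}$, $\eta_4<\tfrac{n+2}{2q}$). But the core construction has a genuine gap, in two places. First, the shape of the singular bumps is wrong. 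A point spatial singularity $|x-x_j|^{-(n+2)/r_0}$ lies in $L^r_{\mathrm{loc}}$ only for $r<\tfrac{n}{n+2}r_0$, so $f\in X^p$ is not guaranteed by $r_0>p$; and even after correcting the exponent to $n/r_0$, the nonlinear coupling breaks: convolution with $\Phi_\alpha$ improves a purely spatial singularity by $2\alpha$ Euclidean orders, so near the singular point $(J_\alpha\varphi_j)^\sigma\lesssim |x-x_j|^{-\sigma(n/r_0-2\alpha)}$, whereas $\psi_j\sim|x-x_j|^{-n/s_0}$ and, by the defining relation $\tfrac{n+2}{2s_0}=\bigl(\tfrac{n+2}{2r_0}-\alpha\bigr)\sigma$, one has $\tfrac{n}{s_0}>\sigma\bigl(\tfrac{n}{r_0}-2\alpha\bigr)$ strictly; so $g\le u^\sigma$ fails at the singularity. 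The singularity must be adapted to the parabolic geometry: the paper takes $f_j=(T_j-t)^{-\xi_4}\raisebox{2pt}{$\chi$}_{\Omega_j}$, $g_j=(T_j-t)^{-\eta_4}\raisebox{2pt}{$\chi$}_{\Omega_j}$ on the backward paraboloids $\Omega_j=\{|x|<\sqrt{T_j-t},\,t_j<t<T_j\}$, for which Lemma \ref{lem7.7} gives both the exactly critical integrability ($f_j\in L^p$ for all $p<r_0$, $\|f_j\|_{L^{r_0}(R_j)}=\infty$) and the matching lower bound $J_\alpha f_j\gtrsim (T_j-t)^{-(\xi_4-\alpha)}$ that makes the coupling close up.

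Second, your claim that ``the verification decouples into per-$j$ estimates'' cannot be correct as stated: on the early-time portion of the $j$-th bump only a thin slab of $\psi_j$ (resp.\ $\varphi_j$) lies in the past, so $J_\beta\psi_j$ and $J_\alpha\varphi_j$ are necessarily small there while the bumps themselves are positive, and the self-coupling inequalities $\varphi_j\le C(J_\beta\psi_j)^\lambda$, $\psi_j\le C(J_\alpha\varphi_j)^\sigma$ fail on that region. This is precisely why the paper's proof of Theorem \ref{thm3.6} superimposes the bumps on a global self-similar background pair $f_0=t^{-\xi_4}\raisebox{2pt}{$\chi$}_{\{|x|^2<t\}}$, $g_0=t^{-\eta_4}\raisebox{2pt}{$\chi$}_{\{|x|^2<t\}}$, which solves the system by itself (Lemma \ref{lem7.6}, since $P_4$ lies on both lines \eqref{4.23}), and splits each $\Omega_j$ into an upper part $\Omega_j^+$, where the bump feeds itself via Lemma \ref{lem7.7}, and a lower part $\Omega_j^-$, where $f_j\le Cf_0$, $g_j\le Cg_0$ and the background carries the coupling; a final rescaling of $f,g$ (possible for arbitrary constants $K_1,K_2$ in \eqref{1.2}) removes the constants. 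Without such a background (or some substitute mechanism, e.g.\ cross-feeding from earlier bumps, which your decoupling assumption excludes), the proposed $u,v$ do not satisfy \eqref{2.2}. A minor further point: you do not need $\sum_j\|\varphi_j\|_{L^p(\mathbb{R}^n\times\mathbb{R})}<\infty$; membership in $X^p$ suffices and is automatic here because $t_j\to\infty$ puts only finitely many bumps below any fixed time $T$.
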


The following theorem deals with solutions of \eqref{2.1}--\eqref{2.3}
when \eqref{2.4}--\eqref{2.7} and \eqref{2.9} hold.

\begin{thm}\label{thm2.7}
  Suppose \eqref{2.1}--\eqref{2.7} and \eqref{2.9} hold.  Then the
  following statements are true.
	\begin{enumerate}
		\item[(i)] If $\lambda\sigma\geq1$ then
		$u=v=0 \text{ a.e. in } \mathbb{R}^n \times\mathbb{R}$.
		\item[(ii)] If $\lambda\sigma<1$ then 
		$u$ and $v$ satisfy \eqref{2.14} and \eqref{2.15} for all $T>0$.
	\end{enumerate}
\end{thm}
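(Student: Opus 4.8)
The plan is to exploit the fact that \eqref{2.9}, combined with \eqref{2.5}, forces the endpoint case $p=1$ and $\alpha=\tfrac{n+2}{2}$, in which the heat kernel \eqref{I.9} degenerates to $\Phi_\alpha(x,t)=c_\alpha e^{-|x|^2/(4t)}\chi_{(0,\infty)}(t)$ with $c_\alpha=\bigl(\Gamma(\alpha)(4\pi)^{n/2}\bigr)^{-1}$ and is therefore \emph{bounded} by $c_\alpha$; this is the structural feature that makes the present case easier than the subcritical ones. Write $u=J_\alpha f$ and $v=J_\beta g$, where $f=(\partial_t-\Delta)^\alpha u\in X^1$ and $g=(\partial_t-\Delta)^\beta v\in X^q$; by \eqref{2.2}, \eqref{2.3} and property (P3) we have $0\le f\le v^\lambda$, $0\le g\le u^\sigma$, and $f=g=0$ a.e.\ for $t<0$. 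The one computation I use repeatedly is the elementary identity $J_\gamma(\tau^\rho\chi_{(0,\infty)})(x,t)=\tfrac{\Gamma(\rho+1)}{\Gamma(\gamma+\rho+1)}t^{\gamma+\rho}$ for $\rho\ge0$ and $t>0$ (integrate the Gaussian over $\xi$ to get $1$, then recognize a Beta integral in $\tau$); in particular $J_\gamma(\chi_{(0,\infty)})(x,t)=t^\gamma/\Gamma(\gamma+1)$. Using $\Phi_\alpha\le c_\alpha$ and $f=0$ for $t<0$, we obtain for $t>0$ the crude bound $u(x,t)=J_\alpha f(x,t)\le c_\alpha\|f\|_{L^1(\mathbb{R}^n\times(0,t))}=:\omega(t)$, where $\omega$ is nondecreasing, continuous and $\omega(0^+)=0$; feeding this into $v=J_\beta g\le J_\beta(u^\sigma)$ gives $v(x,t)\le\omega(t)^\sigma t^\beta/\Gamma(\beta+1)$. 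In particular $u,v\in L^\infty(\mathbb{R}^n\times(0,T))$ for every $T>0$.

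The engine for both parts is the following bootstrap step: if $u(x,t)\le a t^\mu$ and $v(x,t)\le b t^\nu$ for all $x$ and $0<t\le T$, then the same holds with exponents $\mu'=\alpha+\lambda\nu$, $\nu'=\beta+\sigma\mu$ and constants $a'=b^\lambda\tfrac{\Gamma(\lambda\nu+1)}{\Gamma(\alpha+\lambda\nu+1)}$, $b'=a^\sigma\tfrac{\Gamma(\sigma\mu+1)}{\Gamma(\beta+\sigma\mu+1)}$ — immediate from $u=J_\alpha f\le J_\alpha(v^\lambda)\le J_\alpha(b^\lambda\tau^{\lambda\nu}\chi_{(0,\infty)})$ (legitimate since $J_\alpha(v^\lambda)(x,t)$ only samples $\tau\in(0,t)\subseteq(0,T)$) together with the identity above, and symmetrically for $v$. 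For part (ii), fix $T>0$ and iterate this step from $a_0=\omega(T)$, $b_0=\omega(T)^\sigma T^\beta/\Gamma(\beta+1)$, $\mu_0=\nu_0=0$, admissible by Step 1. The exponents satisfy $\mu_{k+2}=\lambda\sigma\,\mu_k+(\alpha+\lambda\beta)$, so $\mu_k\to\mu_*:=\tfrac{\alpha+\lambda\beta}{1-\lambda\sigma}=\gamma_2/\sigma$ and likewise $\nu_k\to\gamma_1/\lambda$, whence $\lambda\nu_k\to\gamma_1$ and $\sigma\mu_k\to\gamma_2$; consequently the $\Gamma$-factors in the constant recursion converge, and since $\log a_k,\log b_k$ obey a linear recursion with convergent forcing and contraction factor $\lambda\sigma<1$, they converge, $a_k\to a_*$ and $b_k\to b_*$. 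A direct computation using \eqref{2.16}--\eqref{2.18} identifies $a_*=M_2^{\lambda/(1-\lambda\sigma)}$ and $b_*=M_1^{\sigma/(1-\lambda\sigma)}$. Letting $k\to\infty$ in $u(x,t)\le a_k t^{\mu_k}$ for each fixed $(x,t)$ with $0<t\le T$ yields $u(x,t)\le a_* t^{\mu_*}$, hence $\|u\|_{L^\infty(\mathbb{R}^n\times(0,T))}\le M_2^{\lambda/(1-\lambda\sigma)}T^{\gamma_2/\sigma}$, i.e.\ \eqref{2.14}; \eqref{2.15} follows identically. As $T>0$ was arbitrary, (ii) is proved.

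For part (i), fix $T>0$. Weakening each polynomial bound $at^\mu$ on $(0,T)$ to the constant $aT^\mu$ and reapplying the bootstrap step with exponent $0$, one sees that a constant bound $m$ for $\|u\|_{L^\infty(\mathbb{R}^n\times(0,T))}$ is updated, after one round, to $m^{\lambda\sigma}\kappa(T)$ with $\kappa(T)=T^{\alpha+\lambda\beta}/(\Gamma(\alpha+1)\Gamma(\beta+1)^\lambda)$, starting from $m_0\le\omega(T)$. Choosing $T$ so small that $m_0^{\lambda\sigma-1}\kappa(T)<1$ — possible because $\omega(T)\to0$ and $\kappa(T)\to0$ as $T\to0^+$, and reading this as $\kappa(T)<1$ when $\lambda\sigma=1$ — forces the iterates to $0$, so $u=0$ a.e.\ on $\mathbb{R}^n\times(0,T)$, and then $v=J_\beta g\le J_\beta(u^\sigma)=0$ there as well. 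With \eqref{2.3} this says $u=v=0$ a.e.\ on $\mathbb{R}^n\times(-\infty,T)$. To globalize, set $\Sigma=\sup\{S:u=v=0\text{ a.e.\ on }\mathbb{R}^n\times(-\infty,S)\}$ and suppose $\Sigma<\infty$; since $J_\alpha$ and $J_\beta$ commute with translation in $t$, the functions $\tilde u(x,t)=u(x,t+\Sigma)$, $\tilde v(x,t)=v(x,t+\Sigma)$ solve the same problem with zero initial data, so the above produces $\tilde T>0$ with $\tilde u=\tilde v=0$ a.e.\ on $\mathbb{R}^n\times(0,\tilde T)$, i.e.\ $u=v=0$ a.e.\ on $\mathbb{R}^n\times(-\infty,\Sigma+\tilde T)$, contradicting the choice of $\Sigma$. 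Hence $\Sigma=\infty$, proving (i).

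The genuinely essential input is the boundedness of $\Phi_\alpha$ in the endpoint regime \eqref{2.9}; without it Step 1 (and hence everything) would require the $L^p$--$L^\infty$ estimates used for the subcritical theorems. The step I expect to be the most delicate in execution is the constant bookkeeping in part (ii): verifying that the iterated constants converge to \emph{precisely} $M_2^{\lambda/(1-\lambda\sigma)}$ and $M_1^{\sigma/(1-\lambda\sigma)}$ amounts to one consolidated identity among $\Gamma$-values at $\gamma_1,\gamma_2,\alpha+\gamma_1,\beta+\gamma_2$ with the exponents $\lambda/(1-\lambda\sigma)$ and $\sigma/(1-\lambda\sigma)$ having to emerge exactly, and it is there that an oversight would be easiest to make.
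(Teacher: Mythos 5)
Your proof is correct, and it reaches the conclusion by a partly different route than the paper, so a comparison is worth recording. The paper deduces Theorem \ref{thm2.7} from the more general Theorem \ref{thm3.7}, whose proof has two ingredients: Lemma \ref{lem4.1}, which establishes $f,g\in X^\infty$ for \emph{any} $p\ge1$ with $2p\alpha\ge n+2$ by an $L^p$--$L^q$ smoothing bootstrap (Lemma \ref{lem7.2}) with suitably reduced orders $\widehat\alpha,\widehat\beta$; and Lemma \ref{lem4.4}, which, assuming only that one of $f,g$ is bounded and nonzero, introduces the first time $a$ at which $f$ becomes nonzero, derives the scalar inequality \eqref{L3.6}, and lets $T\to a^+$ to rule out $\lambda\sigma\ge1$ in one stroke, then obtains the sharp constants by iterating $\delta\mapsto K_1K_2^\lambda(\delta M_1)^{\lambda\sigma}$ at the \emph{fixed} exponent $\gamma_1$. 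You instead exploit that \eqref{2.5} together with \eqref{2.9} forces $p=1$, $\alpha=\frac{n+2}{2}$, so that $\Phi_\alpha$ is bounded and the crude bound $u\le c_\alpha\|f\|_{L^1(\mathbb{R}^n\times(0,t))}$ replaces Lemma \ref{lem4.1}; this is genuinely more elementary, but it is tied to the endpoint case and would not cover Theorem \ref{thm3.7}, where \eqref{2.5} is not assumed --- it is exactly enough for Theorem \ref{thm2.7}. For $\lambda\sigma<1$ your iteration runs directly on $u,v$ with drifting exponents $\mu_k\to\gamma_2/\sigma$, $\nu_k\to\gamma_1/\lambda$, and the fixed-point computation you outline does give precisely $a_*=M_2^{\lambda/(1-\lambda\sigma)}$ and $b_*=M_1^{\sigma/(1-\lambda\sigma)}$ (the limit is independent of the $T$-dependent starting constants because $\lambda\sigma<1$ makes the log-recursion a contraction), matching \eqref{2.14}--\eqref{2.15}; the paper's version of the same contraction is phrased for $f$ after first securing the power bound \eqref{L3.7}. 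For $\lambda\sigma\ge1$ your small-time vanishing plus the continuation argument (sup of vanishing times, time-translation invariance of $J_\alpha$, and $f\le v^\lambda=0$, $g\le u^\sigma=0$ on the vanishing interval) is valid but more laborious than the paper's first-nonzero-time argument, which needs no continuation step because the contradiction is produced at $a$ itself. Net effect: your proof buys elementarity (no smoothing lemma) at the price of specializing to the endpoint case and adding a continuation argument; the paper's buys generality and a shorter rigidity argument.
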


Clearly Theorem \ref{thm2.7}(i) is optimal and the optimality of
Theorem \ref{thm2.7}(ii) follows from Theorems \ref{thm2.3} and
\ref{thm2.4} because neither \eqref{2.8} nor \eqref{2.9} is assumed in
those theorems.

\section{$J_\alpha$ version of results}\label{sec3}
In order to prove our results in Section \ref{sec2}, we will first reformulate
them in terms of the inverse $J_\alpha$ of the fractional heat
operator \eqref{I.3} as follows:

If \eqref{2.4}--\eqref{2.6} hold then by properties (P1)--(P3) in
Section \ref{sec1} of $J_\alpha$ and the definition of the fractional
heat operator \eqref{I.3}, $u$ and $v$ satisfy
\eqref{2.1}--\eqref{2.3} if and only if
\[f:=(\partial_t -\Delta)^\alpha u \quad\text{and}\quad
g:=(\partial_t -\Delta)^\beta v\] 
satisfy
\begin{equation}\label{3.1}
 f\in X^p \quad\text{ and }\quad g\in X^q
\end{equation}
\begin{equation}\label{3.2}
 \begin{rcases}
  0\leq f\leq(J_\beta g)^\lambda \\
  0\leq g\leq(J_\alpha f)^\sigma
 \end{rcases}
 \quad\text{ in }\mathbb{R}^n \times\mathbb{R}
\end{equation}
\begin{equation}\label{3.3}
 f=g=0\quad\text{ in }\mathbb{R}^n \times(-\infty,0).
\end{equation}
In problem \eqref{3.1}--\eqref{3.3}, $f$ and $g$ are nonnegative
functions in $\mathbb{R}^n \times\mathbb{R}$ and thus $J_\alpha f$ and
$J_\beta g$ are well-defined nonnegative extended real valued
functions in $\mathbb{R}^n \times\mathbb{R}$ without assuming
\eqref{2.5} and \eqref{2.6}.  Hence in this section we study the
problem \eqref{3.1}--\eqref{3.3} without assumptions \eqref{2.5} and
\eqref{2.6}.  However our results in this section for the problem
\eqref{3.1}--\eqref{3.3} will only yield corresponding results for the
problem \eqref{2.1}--\eqref{2.3} when \eqref{2.5} and \eqref{2.6} hold,
for otherwise the fractional heat operators in \eqref{2.2} are not
defined.  (For a more detailed discussion of the properties of
$J_\alpha$ when \eqref{2.5} and \eqref{2.6} do not hold see
\cite[Section 4]{T}.)

Actually in this section we will consider solutions
\begin{equation}\label{1.1}
 f\in X^p \quad\text{ and }\quad g\in X^q
\end{equation}
of the following slightly more general version of \eqref{3.2}--\eqref{3.3}:
\begin{equation}\label{1.2}
 \begin{rcases}
  0\leq f\leq K_1 (J_\beta g)^\lambda \\
  0\leq g\leq K_2 (J_\alpha f)^\sigma
 \end{rcases}
 \quad\text{ in }\mathbb{R}^n \times\mathbb{R},\,n\geq1,
\end{equation}
\begin{equation}\label{1.3}
 f=g=0\quad\text{ in }\mathbb{R}^n \times(-\infty,0),
\end{equation}
where
\begin{equation}\label{1.4}
 p,q\in[1,\infty)\quad\text{ and }\quad\lambda,\sigma,\alpha,\beta,K_1 ,K_2 \in(0,\infty)
\end{equation}
are constants.

As in Section \ref{sec2}, we can assume without loss of generality that
\begin{equation}\label{1.5}
 (n+2-2p\alpha)q^2 \sigma\leq(n+2-2q\beta)p^2 \lambda.
\end{equation}

Under the equivalence of problems \eqref{2.1}--\eqref{2.3} and
\eqref{3.1}--\eqref{3.3} discussed above, the following Theorems
\ref{thm3.1}--\ref{thm3.7} when restricted to the case that
\eqref{2.5} and \eqref{2.6} hold and $K_1 =K_2 =1$, clearly
imply Theorems \ref{thm2.1}--\ref{thm2.7} in Section \ref{sec2}.  We
will prove Theorems \ref{thm3.1}--\ref{thm3.7} in Section \ref{sec5}.

If \eqref{1.4} holds then either
\begin{equation}\label{1.6.1}
 2p\alpha<n+2
\end{equation}
or
\begin{equation}\label{1.6.2}
 2p\alpha\geq n+2.
\end{equation}

\begin{thm}\label{thm3.1}
 Suppose \eqref{1.1}--\eqref{1.5} and \eqref{1.6.1} hold and
 $$\frac{1}{\lambda}\leq\sigma<\mu(\lambda).$$
 Then
 $$f=g=0\quad\text{ a.e. in }\mathbb{R}^n \times\mathbb{R}.$$
\end{thm}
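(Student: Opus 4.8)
The plan is to derive a self-improving integral inequality for $f$ and $g$ on backward space-time cylinders and show that, in the regime $1/\lambda \le \sigma < \mu(\lambda)$, the only consistent conclusion is that both vanish. Concretely, I would fix a point $(x_0,t_0)\in\mathbb{R}^n\times(0,\infty)$ and for $\rho>0$ small work on the parabolic cylinder $Q_\rho = B_\rho(x_0)\times(t_0-\rho^2,t_0)$. Using the explicit convolution form \eqref{I.7} of $J_\alpha$ and the Gaussian lower bound on $\Phi_\alpha$ inside such a cylinder, one gets a pointwise lower bound $J_\alpha f(x,t) \gtrsim \rho^{2\alpha - n - 2}\iint_{Q_{\rho/2}} f$ for $(x,t)$ in the upper portion of $Q_\rho$, and similarly for $J_\beta g$. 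Feeding these into \eqref{1.2} and integrating gives a coupled pair of inequalities of the form
\[
 \iint_{Q_{\rho/2}} f \;\lesssim\; \rho^{a}\Bigl(\iint_{Q_\rho} g\Bigr)^{\lambda},\qquad
 \iint_{Q_{\rho/2}} g \;\lesssim\; \rho^{b}\Bigl(\iint_{Q_\rho} f\Bigr)^{\sigma},
\]
with exponents $a,b$ computed from $n,\alpha,\beta,\lambda,\sigma$ by scaling; the constants are uniform in $(x_0,t_0)$ but do depend on $K_1,K_2$.

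Next I would combine the two inequalities to decouple them. Substituting the second bound into the first (on a dyadically smaller cylinder, which is harmless up to constants) yields an inequality purely in $\Phi(\rho):=\iint_{Q_\rho} f$ of the shape $\Phi(\rho/4) \lesssim \rho^{c}\,\Phi(\rho)^{\lambda\sigma}$, where $c = c(n,\alpha,\beta,\lambda,\sigma)$. The condition $\sigma < \mu(\lambda)$, after unwinding the definition \eqref{2.11} of $\mu$, is exactly the statement that the scaling exponent $c$ has the sign forcing the iteration to collapse: iterating the inequality along $\rho_k = \rho_0 4^{-k}$ and using that $\Phi$ is nondecreasing in $\rho$ and finite (since $f\in X^p$, hence $f\in L^1_{\mathrm{loc}}$), one obtains $\Phi(\rho)=0$ for all small $\rho$, hence $f\equiv 0$ near $(x_0,t_0)$. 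The boundary case $\sigma \ge 1/\lambda$, i.e. $\lambda\sigma\ge 1$, is what makes the map $t\mapsto t^{\lambda\sigma}$ superlinear (or linear), which is precisely what allows the bootstrap; when $\lambda\sigma<1$ the iteration would instead produce a nonzero bound, matching the dichotomy with Theorem \ref{thm2.2}. One must treat $\lambda\sigma=1$ separately, where the iteration is of the form $\Phi(\rho/4)\lesssim \rho^c \Phi(\rho)$ and one uses either the strict inequality $\sigma<\mu(\lambda)$ to get $c>0$ with a genuinely contracting constant, or an $\varepsilon$-of-room argument.

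Since $(x_0,t_0)$ was arbitrary in $\mathbb{R}^n\times(0,\infty)$ and $f=g=0$ on $\mathbb{R}^n\times(-\infty,0)$ by \eqref{1.3}, this gives $f=g=0$ a.e.\ in $\mathbb{R}^n\times\mathbb{R}$, and then $(J_\beta g)^\lambda = (J_\alpha f)^\sigma = 0$ makes \eqref{1.2} trivially consistent. I expect the main obstacle to be making the cylinder-to-cylinder passage rigorous: the lower bound for $J_\alpha f$ on $Q_\rho$ uses the mass of $f$ on a \emph{strictly earlier and smaller} cylinder, so one has to arrange the geometry of the nested cylinders (and the dyadic radii) carefully so that both the lower bounds and the monotonicity of $\Phi$ can be applied simultaneously without losing the favorable exponent; tracking the exact power of $\rho$ through the two-step substitution, and verifying it coincides with the sign prescribed by $\sigma<\mu(\lambda)$, is the computational heart of the argument. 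A secondary subtlety is handling the borderline initial-time cylinders (those with $t_0 - \rho^2 < 0$), where one should exploit \eqref{1.3} to see that the relevant integrals only shrink, so the estimate is in fact easier there.
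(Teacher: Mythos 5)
There is a genuine gap, and it occurs at the very first substantive step. The system \eqref{1.2} bounds $f$ and $g$ \emph{from above} by powers of the potentials $J_\beta g$ and $J_\alpha f$. Your cylinder estimate, obtained from positivity of the kernel, is a \emph{lower} bound $J_\beta g\gtrsim \rho^{2\beta-n-2}\iint_{Q_{\rho/2}}g$; feeding a lower bound for the right-hand side into the upper bound $f\le K_1(J_\beta g)^\lambda$ yields no information, so the coupled local inequalities $\iint_{Q_{\rho/2}}f\lesssim\rho^a(\iint_{Q_\rho}g)^\lambda$, $\iint_{Q_{\rho/2}}g\lesssim\rho^b(\iint_{Q_\rho}f)^\sigma$ do not follow from \eqref{1.2}. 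To run such a scheme you would need \emph{upper} bounds on $J_\beta g$ and $J_\alpha f$ in terms of local masses, which is impossible: the kernel is globally supported in space and integrates over all past times, so controlling the potentials requires global information about $f$ and $g$. (A secondary problem is the end of the iteration: a downward dyadic iteration with superlinear exponent $\lambda\sigma\ge1$ at best gives fast decay of $\Phi(\rho)$ as $\rho\to0$, which is not the same as $\Phi(\rho)=0$ for a fixed $\rho$; one would still need to beat the trivial decay $\rho^{n+2}$ and invoke Lebesgue differentiation.)

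There is also a structural reason why this purely local, scaling-based route cannot give the stated theorem: your exponent $c$ would depend only on $n,\alpha,\beta,\lambda,\sigma$, whereas the threshold $\mu(\lambda)$ in \eqref{2.11} depends on $p$, so the condition $\sigma<\mu(\lambda)$ cannot be ``exactly'' a sign condition on $c$. Worse, Theorems \ref{thm3.5} and \ref{thm3.6} construct nontrivial solutions with $f\in L^p$, $g\in L^q$ (hence locally integrable) whenever $\sigma>\mu(\lambda)$, a regime where $\lambda\sigma>1$ still holds; since your argument uses only the pointwise system, $\lambda\sigma\ge1$, and $f\in L^1_{\mathrm{loc}}$, it would rule these out as well, so it proves too much. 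The hypotheses $f\in X^p$, $g\in X^q$ must enter essentially, and this is exactly how the paper proceeds: Lemma \ref{lem4.2} uses the $L^p\to L^{p_2}$ smoothing of $J_\alpha$ (Lemma \ref{lem7.2}) to gain a fixed amount of integrability per step, the condition $\sigma<\mu(\lambda)$ being precisely what makes the gain $C_0>0$; iterating (Lemma \ref{lem4.3}) gives $f,g\in X^\infty$, and then Lemma \ref{lem4.4} works on the time slab just after the first time $a$ at which $f$ becomes nonzero, where the composed bound $f\le K_1K_2^\lambda(J_\beta((J_\alpha f)^\sigma))^\lambda$ and boundedness force $1\le C\|f\|_{L^\infty}^{\lambda\sigma-1}(T-a)^{\lambda(\beta+\alpha\sigma)}$, which is contradictory as $T\to a^+$ when $\lambda\sigma\ge1$; hence $f\equiv0$ and then $g\equiv0$.
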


\begin{thm}\label{thm3.2}
 Suppose \eqref{1.1}--\eqref{1.5} and \eqref{1.6.1} hold and
 \begin{equation}\label{1.6.3}
  \sigma< 1/\lambda.
 \end{equation}
 Then for all $T>0$ we have
 \begin{equation}\label{1.7}
  \| f\|_{L^\infty (\mathbb{R}^n \times(0,T))}\leq(K_1 K^{\lambda}_{2})^{\frac{1}{1-\lambda\sigma}}M^{\frac{\lambda\sigma}{1-\lambda\sigma}}_{1}T^{\gamma_1},
 \end{equation}
 \begin{equation}\label{1.8}
  \| g\|_{L^\infty (\mathbb{R}^n \times(0,T))}\leq(K_2 K^{\sigma}_{1})^{\frac{1}{1-\lambda\sigma}}M^{\frac{\lambda\sigma}{1-\lambda\sigma}}_{2}T^{\gamma_2},
 \end{equation}
 \begin{equation}\label{1.9}
  \| J_\alpha f\|_{L^\infty (\mathbb{R}^n \times(0,T))}\leq(K_1
  K^{\lambda}_2
)^{\frac{1}{1-\lambda\sigma}}M^{\frac{\lambda}{1-\lambda\sigma}}_{2}T^{\gamma_2 /\sigma},
 \end{equation}
 and
 \begin{equation}\label{1.10}
  \| J_\beta g\|_{L^\infty (\mathbb{R}^n \times(0,T))}\leq(K_2 K^{\sigma}_1)^{\frac{1}{1-\lambda\sigma}}M^{\frac{\sigma}{1-\lambda\sigma}}_{1}T^{\gamma_1/\lambda},
 \end{equation}
 where $\gamma_1 ,\gamma_2 ,M_1$, and $M_2$ defined in \eqref{2.16}--\eqref{2.18}.
\end{thm}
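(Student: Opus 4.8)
The plan is to set up a bootstrap (iteration) scheme directly on the integral inequalities \eqref{1.2}, exploiting that $J_\alpha$ and $J_\beta$ reproduce monotone power-type bounds in a controlled way. The key computational fact I would isolate first is the following ``power rule'' for $J_\alpha$: if $0\le h(x,t)\le c\,t_+^{\,m}$ in $\mathbb{R}^n\times\mathbb{R}$ with $t_+=\max\{t,0\}$, then, since $\Phi_\alpha$ integrates a polynomial in $t$ against a Gaussian in $x$ whose total mass is $1$, one gets
\[
J_\alpha h(x,t)\le c\int_0^{t}\frac{(t-\tau)^{\alpha-1}}{\Gamma(\alpha)}\,\tau^{m}\,d\tau
= c\,\frac{\Gamma(m+1)}{\Gamma(\alpha+m+1)}\,t^{\alpha+m}\quad\text{for }t>0,
\]
by the Beta-function identity, and $J_\alpha h=0$ for $t\le 0$ because $h=0$ there (using \eqref{1.3}). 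This is exactly why the Gamma factors in $M_1,M_2$ of \eqref{2.17}--\eqref{2.18} appear. Note this step uses neither \eqref{1.6.1} nor $\sigma<1/\lambda$; those hypotheses enter only to make the iterated exponents converge.

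Next I would run the iteration. Start from the crude a priori bound: fix $T>0$; since $f\in X^p$, $g\in X^q$ and $J_\beta g$, $J_\alpha f$ are locally bounded (property (P1) / the arguments in \cite{T}), on the slab $\mathbb{R}^n\times(0,T)$ we have $f\le A_0 t_+^{0}$ and $g\le B_0 t_+^{0}$ for some finite $A_0,B_0$ depending on $T$. Actually it is cleaner to start one step in: from \eqref{1.2}, $g\le K_2(J_\alpha f)^\sigma$ and the power rule give $J_\alpha f\le (A_0/\ldots)\,t^\alpha$, hence $g\le C\,t^{\alpha\sigma}$, then $J_\beta g\le C' t^{\beta+\alpha\sigma}$, then $f\le K_1(J_\beta g)^\lambda\le C'' t^{(\beta+\alpha\sigma)\lambda}$, and so on. Writing $f\le a_k t^{e_k}$, $g\le b_k t^{d_k}$, the exponents satisfy the linear recursions
\[
e_{k+1}=\bigl(\beta+\alpha+e_k\sigma\bigr)\lambda,\qquad d_{k+1}=\bigl(\alpha+\beta+d_k\lambda\bigr)\sigma,
\]
whose (unique, since $\lambda\sigma<1$ by \eqref{1.6.3}) fixed points are precisely $e_\infty=\gamma_1$, $d_\infty=\gamma_2$ as defined in \eqref{2.16}; because $\lambda\sigma<1$ the iteration $e\mapsto(\beta+\alpha+e\sigma)\lambda$ is a contraction with ratio $\lambda\sigma$, so $e_k\to\gamma_1$ and $d_k\to\gamma_2$. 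Simultaneously the coefficients satisfy multiplicative recursions of the form $a_{k+1}=K_1 K_2^\lambda\,(\text{Gamma ratio})\,a_k^{\lambda\sigma}$; taking $\log$ turns this into an affine recursion in $\log a_k$ with the same contraction ratio $\lambda\sigma$, so $a_k$ and $b_k$ converge, and a direct evaluation of the limit — bookkeeping the geometric series $\sum(\lambda\sigma)^j$ in the exponent of $M_1$ and of $K_1K_2^\lambda$ — yields exactly the constants in \eqref{1.7}--\eqref{1.10}. Passing $k\to\infty$ (the bounds are monotone/uniform on $(0,T)$, so this is legitimate) gives \eqref{1.7} for $f$ and \eqref{1.8} for $g$; then one more application of the power rule to these final bounds produces \eqref{1.9} for $J_\alpha f$ and \eqref{1.10} for $J_\beta g$.

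The main obstacle is making the limit rigorous and getting the constants \emph{exactly} right rather than merely ``up to a constant.'' Two points need care. First, the initial coefficients $A_0,B_0$ depend on $T$ and are not controlled by $M_1,M_2$; the resolution is that they appear in $\log a_k$ only through the term $(\lambda\sigma)^k\log A_0\to 0$, so they wash out in the limit — but one must check the convergence is genuine (it is, since $\lambda\sigma<1$ and, after finitely many steps, the exponents $e_k,d_k$ are bounded below, keeping the Gamma ratios bounded). Second, one must verify that the monotone bound $f\le a_k t^{e_k}$ at stage $k$ is inherited correctly through $J_\beta$ even though $e_k$ may differ from $e_{k+1}$: this is fine because the power rule is valid for any exponent $m>-1$, and the $e_k$ are nonnegative for $k\ge 1$. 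I would also remark that the hypothesis \eqref{1.6.1} is used only where \cite{T} needs it (to guarantee $J_\alpha f,J_\beta g$ are locally finite and to start the iteration); the structural inequality \eqref{1.5} plays no role here and is carried along only for consistency with the rest of Section~\ref{sec3}.
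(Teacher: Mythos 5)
There is a genuine gap at the very first step of your iteration: you assert that, because $f\in X^p$, $g\in X^q$ and ``$J_\beta g$, $J_\alpha f$ are locally bounded (property (P1))'', one has $f\le A_0$ and $g\le B_0$ on $\mathbb{R}^n\times(0,T)$. Property (P1) only gives $J_\alpha f\in L^p_{\mathrm{loc}}$, and the hypotheses only give $f\in X^p$, $g\in X^q$; nothing in the assumptions hands you an a priori $L^\infty$ bound on the slab, and without it your ``power rule'' iteration cannot even start. Establishing exactly this boundedness is the substantial part of the paper's proof: Lemmas \ref{lem4.1}--\ref{lem4.3} run a bootstrap in the $L^p$ scale, using the smoothing estimate of Lemma \ref{lem7.2} (each application of $J_\alpha$ or $J_\beta$ raises the integrability exponent, with a uniform gain $C_0>0$ in $1/p$ per cycle), to conclude $f,g\in X^\infty$. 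This is also where the hypotheses \eqref{1.6.1} and $\sigma<\mu(\lambda)$ (implied by $\sigma<1/\lambda$ when $2p\alpha<n+2$) enter quantitatively, not merely ``to guarantee local finiteness'' as you suggest. Only after $f,g\in X^\infty$ is known does the paper (Lemma \ref{lem4.4}) run the constant-improving iteration.

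Once boundedness is granted, your scheme is essentially the paper's Lemma \ref{lem4.4}: the paper first pins down the exponent $\gamma_1$ by a scaling inequality of the form \eqref{L3.6} (giving $\|f\|_{L^\infty(\mathbb{R}^n\times(0,T))}\le C\,T^{\gamma_1}$ with a suboptimal constant) and then iterates only the constant via $\delta_{j+1}=K_1K_2^\lambda(\delta_jM_1)^{\lambda\sigma}$, whereas you iterate exponents and constants simultaneously; both converge because $\lambda\sigma<1$, and your bookkeeping of the geometric series does produce the stated constants. Two small corrections: your exponent recursion should read $e_{k+1}=\bigl(\beta+(\alpha+e_k)\sigma\bigr)\lambda$ (you wrote $(\beta+\alpha+e_k\sigma)\lambda$, whose fixed point is not $\gamma_1$), and the washing-out of $A_0,B_0$ in the limit is fine but is exactly the kind of argument the paper sidesteps by fixing the exponent first. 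So the iteration half of your proposal is sound and close in spirit to the paper; the missing $L^p\to L^\infty$ regularity step is the real omission and it is not a routine one.
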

 
\begin{thm}\label{thm3.3}
 Suppose \eqref{1.4} and \eqref{1.6.3} hold,
 \begin{equation}\label{1.15}
  T>0, \quad 0<N_1 <M_1, \quad\text{ and }\quad 0<N_2 <M_2,
 \end{equation}
 where $M_1$ and $M_2$ are defined in \eqref{2.17} and \eqref{2.18}.  Then there exist solutions
 \begin{equation}\label{1.16}
 f\in L^p (\mathbb{R}^n \times\mathbb{R})\cap C(\mathbb{R}^n
 \times\mathbb{R})\quad\text{ and }\quad g\in L^q (\mathbb{R}^n \times\mathbb{R})\cap C(\mathbb{R}^n \times\mathbb{R})
 \end{equation}
 of \eqref{1.2}, \eqref{1.3} such that
 \begin{equation}\label{1.17}
 J_\alpha f,\,J_\beta g\in C(\mathbb{R}^n \times\mathbb{R})
 \end{equation}
 and, for $0<t<T$,
 \begin{equation}\label{1.18}
  f(0,t)=(K_1 K^{\lambda}_{2})^{\frac{1}{1-\lambda\sigma}}N^{\frac{\lambda\sigma}{1-\lambda\sigma}}_{1}t^{\gamma_1},
 \end{equation}
  \begin{equation}\label{1.19}
  g(0,t)=(K_2 K^{\sigma}_{1})^{\frac{1}{1-\lambda\sigma}}N^{\frac{\lambda\sigma}{1-\lambda\sigma}}_{2}t^{\gamma_2},
 \end{equation}
  \begin{equation}\label{1.20}
  J_\alpha f(0,t)\geq(K_1 K^{\lambda}_{2})^{\frac{1}{1-\lambda\sigma}}N^{\frac{\lambda}{1-\lambda\sigma}}_{2}t^{\gamma_2 /\sigma},
 \end{equation}
 \begin{equation}\label{1.21}
  J_\beta g(0,t)\geq(K_2 K^{\sigma}_{1})^{\frac{1}{1-\lambda\sigma}}N^{\frac{\sigma}{1-\lambda\sigma}}_{1}t^{\gamma_1 /\lambda},
 \end{equation}
 where $\gamma_1$ and $\gamma_2$ are defined in \eqref{2.16}.
\end{thm}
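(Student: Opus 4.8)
The plan is to exhibit $f,g$ as explicit, essentially parabolically self‑similar profiles and then read off \eqref{1.18}--\eqref{1.21} from the inequalities in \eqref{1.2}. First I would fix $\gamma_1,\gamma_2$ by \eqref{2.16}; the point is that these are the unique numbers satisfying the balance relations $\gamma_1=(\beta+\gamma_2)\lambda$ and $\gamma_2=(\alpha+\gamma_1)\sigma$, which are solvable with $\gamma_1,\gamma_2>0$ precisely because \eqref{1.6.3} gives $\lambda\sigma<1$. With this choice, if $g\sim t_+^{\gamma_2}$ then $J_\beta g\sim t_+^{\gamma_1/\lambda}$, so $(J_\beta g)^\lambda$ matches $f\sim t_+^{\gamma_1}$ through the first inequality in \eqref{1.2}, and symmetrically for $J_\alpha f$. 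I would also record at once that \eqref{1.20}, \eqref{1.21} are automatic: rearranging \eqref{1.2} gives $J_\alpha f\ge (g/K_2)^{1/\sigma}$ and $J_\beta g\ge (f/K_1)^{1/\lambda}$ pointwise, and substituting \eqref{1.18}, \eqref{1.19} at $x=0$ reproduces \eqref{1.20}, \eqref{1.21} exactly (the constants $(K_1K_2^\lambda)^{1/(1-\lambda\sigma)}$ and $(K_2K_1^\sigma)^{1/(1-\lambda\sigma)}$ transform as needed). So it remains to build $f\in L^p\cap C$, $g\in L^q\cap C$ with $J_\alpha f,J_\beta g\in C$, satisfying \eqref{1.2}, \eqref{1.3} and the prescribed origin values for $0<t<T$.

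The ansatz is $f(x,t)=a_1\,\psi_1(x,t)\,e^{-|x|^2/\rho_1^2}\,\eta(t)\,t_+^{\gamma_1}$ and $g(x,t)=a_2\,\psi_2(x,t)\,e^{-|x|^2/\rho_2^2}\,\eta(t)\,t_+^{\gamma_2}$, where $\eta\in C^\infty(\mathbb{R})$ is a temporal cutoff with $\eta\equiv1$ on $(-\infty,T]$ and $\eta\equiv0$ on $[2T,\infty)$, $\rho_1,\rho_2$ are large widths, and each $\psi_i$ is either $\equiv1$ or a smooth parabolically self‑similar ``notch'' $\psi_i(x,t)=\Psi_i(x/\sqrt t)$ with $0<\Psi_i\le1$, $\Psi_i(0)=p_i\in(0,1]$, and $\Psi_i\equiv1$ outside a small ball. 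Then $\psi_i(0,t)=p_i$, so \eqref{1.18}, \eqref{1.19} force $a_1p_1=(K_1K_2^\lambda)^{1/(1-\lambda\sigma)}N_1^{\lambda\sigma/(1-\lambda\sigma)}$ and $a_2p_2=(K_2K_1^\sigma)^{1/(1-\lambda\sigma)}N_2^{\lambda\sigma/(1-\lambda\sigma)}$. Two elementary computations do the work: (i) the Riemann--Liouville identity $\int_0^t(t-\tau)^{\gamma-1}\tau^\delta\,d\tau=\tfrac{\Gamma(\gamma)\Gamma(\delta+1)}{\Gamma(\gamma+\delta+1)}t^{\gamma+\delta}$, which combined with $\int_{\mathbb{R}^n}\Phi_\alpha(x-\xi,s)\,d\xi=\tfrac{s^{\alpha-1}}{\Gamma(\alpha)}$ produces exactly the $\Gamma$‑ratios $M_1,M_2$ of \eqref{2.17}, \eqref{2.18}; and (ii) the heat–Gaussian identity $\int_{\mathbb{R}^n}\tfrac{e^{-|x-\xi|^2/(4s)}}{(4\pi s)^{n/2}}e^{-|\xi|^2/\rho^2}\,d\xi=(1+4s/\rho^2)^{-n/2}e^{-|x|^2/(\rho^2+4s)}$, which for $0<s<2T$ and $\rho$ large is $\ge(1-\varepsilon)e^{-|x|^2/\rho^2}$. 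Using (i)--(ii), the ``bulk'' parts of \eqref{1.2} (where $\psi_i\equiv1$) reduce to a few algebraic inequalities among $a_1,a_2,p_1,p_2$, and the spatial decays match provided $\lambda\rho_1^2\le\rho_2^2\le\rho_1^2/\sigma$, which is possible exactly because $\lambda\sigma\le1$.

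The crux is that $p_1,p_2$ and the notch can be chosen so that \eqref{1.2} holds globally for \emph{arbitrary} $N_1<M_1$, $N_2<M_2$. Writing $x=N_1/M_1,\ y=N_2/M_2\in(0,1)$, the choice $\psi_1\equiv\psi_2\equiv1$ works iff $x\le y^\lambda$ and $y\le x^\sigma$; one of these may fail, but never both, since $\lambda\sigma<1$ forces the region $\{x> y^\lambda\}\cap\{y> x^\sigma\}$ in $(0,1)^2$ to be empty. On the offending side one installs the notch: e.g. if $x\ge y^\lambda$, take $p_2<1$, so $g$ dips at $x=0$ while its bulk height $a_2$ rises (recall $a_2p_2$ is fixed); then $J_\beta g(0,t)$ is boosted by essentially the factor $1/p_2$ relative to $g(0,t)$, enough to validate $f(0,t)\le K_1(J_\beta g(0,t))^\lambda$. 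The competing requirement $g\le K_2(J_\alpha f)^\sigma$, where $g$ is now taller, forces a lower bound on $p_2$, and the admissible interval for $p_2$ is nonempty precisely because $x^\sigma<1$, i.e. because $N_1<M_1$ \emph{strictly} (here one uses that $x\ge y^\lambda$ already implies $y<x^\sigma$). Making the notch parabolically self‑similar is essential: it makes the $1/p_2$ boost of $J_\beta g(0,t)$ valid uniformly for all $t\in(0,T)$, including as $t\to0^+$, whereas a fixed‑scale notch would be resolved by the heat kernel only for $t$ bounded below.

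Finally, $f\in L^p(\mathbb{R}^n\times\mathbb{R})$ and $g\in L^q$ because the Gaussian factors are integrable in $x$, $\eta$ has compact support in $t$, and $\gamma_1,\gamma_2>0$ makes $t_+^{p\gamma_1},t_+^{q\gamma_2}$ integrable near $t=0$; $f,g\in C$ is clear, and $J_\alpha f,J_\beta g\in C$ follows by dominated convergence using $\alpha,\beta>0$ (the kernel singularity $(t-\tau)^{\alpha-1}$ at $\tau=t$ is integrable against the bounded, $\tau$‑compactly‑supported $f$). The main obstacle, where most of the estimation lies, is verifying the two inequalities in \eqref{1.2} at \emph{every} $(x,t)$: in the bulk the heat–Gaussian identity reduces this to the algebraic inequalities above, but in the transition zone near $x=0$ where $\psi_i$ is not a pure Gaussian one must bound $J_\alpha f,J_\beta g$ there directly and uniformly in $t$, and it is exactly there that the self‑similarity of the notch and the strict slack from $N_i<M_i$ (as opposed to $N_i\le M_i$) are used.
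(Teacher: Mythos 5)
Your route is genuinely different from the paper's and most of its ingredients check out, but there is one concrete gap: you never verify \eqref{1.2} on the temporal transition zone $T<t<2T$, and with $\eta$ only required to be smooth, $\equiv 1$ on $(-\infty,T]$ and $\equiv 0$ on $[2T,\infty)$, the inequalities there can genuinely fail. For such $t$ the left-hand side of the first inequality still carries the full amplitude, $f(x,t)\sim a_1\eta(t)t^{\gamma_1}$, while in $J_\beta g(x,t)$ the portion of $\int_0^t(t-\tau)^{\beta-1}\eta(\tau)\tau^{\gamma_2}\,d\tau$ coming from $\tau\in(T,t)$ is damped by $\eta$, so once $t-T$ is comparable to $T$ the convolution falls below the value $\frac{\Gamma(\gamma_2+1)}{\Gamma(\beta+\gamma_2+1)}t^{\beta+\gamma_2}$ used in your bulk algebra by a fixed factor. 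Because your amplitudes are pinned by the exact equalities \eqref{1.18}, \eqref{1.19}, the only margin available is the fixed slack coming from $N_i<M_i$, which may be arbitrarily small; after cancelling the $t$-powers (using $(\beta+\gamma_2)\lambda=\gamma_1$) the requirement becomes essentially $\eta(t)\le\bigl(\bar\eta(t)\bigr)^\lambda(1+\text{slack})$, where $\bar\eta(t)$ is the $(t-\tau)^{\beta-1}\tau^{\gamma_2}$-weighted average of $\eta$ on $(0,t)$, and for a generic cutoff (say roughly linear on $[T,2T]$) this fails, e.g.\ when $\lambda$ is large or the slack is small. The fix is exactly the device the paper uses with its cutoff $\psi_\delta$ on $[1,1+\delta]$: make the transition layer thin, i.e.\ force $\eta$ down to the required threshold on $[T,T+\delta]$ with $\delta$ chosen (in terms of the slack) so that the resulting loss, of order $\delta^{\min(\alpha,\beta,1)}$, is absorbed by the strict inequalities $N_i<M_i$ — the same continuity-plus-slack argument you already invoke for the notch, but it must be stated and checked for both inequalities in \eqref{1.2}.

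Apart from this, your plan is a legitimate alternative to the paper's proof and in one respect stronger in aim: the paper multiplies the exact self-similar pair $F,G$ of Remark \ref{rem4.1} by constants $a_1,a_2$ with $a_1^{1/\lambda}<a_2<a_1^{\sigma}$, cuts off in time over a thin layer and in space by a slowly varying exponential (the $I(\gamma)$, $e^{-\varepsilon\gamma\sqrt2}$ bookkeeping), and only obtains lower bounds at $x=0$, whereas your self-similar notch is designed to hit \eqref{1.18}, \eqref{1.19} as literal equalities. Your reduction of \eqref{1.20}, \eqref{1.21} to \eqref{1.2} plus \eqref{1.18}, \eqref{1.19} is correct (the exponents of $K_1,K_2$ combine as claimed), the Gaussian–heat identity and the width condition $\lambda\rho_1^2\le\rho_2^2\le\rho_1^2/\sigma$ are right, and the dichotomy in $(N_1/M_1,N_2/M_2)$ together with the nonempty admissible interval for the notch depth is consistent precisely because $\lambda\sigma<1$ and the inequalities $N_i<M_i$ are strict. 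The remaining work you defer — showing the narrow parabolically self-similar notch changes $J_\beta g(0,t)$, uniformly down to $t\to0^+$, only by a factor absorbable in that slack — is the analogue of the paper's $\delta$- and $\gamma$-estimates and looks carried out in the same spirit; with it and the corrected time cutoff, your construction would prove the theorem.
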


\begin{thm}\label{thm3.4}
  Suppose \eqref{1.4} and \eqref{1.6.3} hold.  Then there exist $N>0$
  and solutions $f\in X^p$ and $g\in X^q$ of \eqref{1.2}, \eqref{1.3}
  such that for $|x|^2 <t$ we have
 \begin{equation}\label{1.22}
  f(x,t)\geq Nt^{\gamma_1},\quad g(x,t)\geq Nt^{\gamma_2},
 \end{equation}
 \begin{equation}\label{1.23}
  J_\alpha f(x,t)\geq Nt^{\gamma_2 /\sigma},\quad\text{ and }\quad J_\beta g(x,t)\geq Nt^{\gamma_1 /\lambda},
 \end{equation}
 where $\gamma_1$ and $\gamma_2$ are defined in \eqref{2.16}.
\end{thm}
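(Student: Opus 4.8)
The plan is to write down an explicit pair $(f,g)$ and fix two scalar constants at the very end. Put
\[
f(\xi,\tau):=a\,\tau^{\gamma_1}\chi_E(\xi,\tau),\qquad g(\xi,\tau):=b\,\tau^{\gamma_2}\chi_E(\xi,\tau),\qquad E:=\{(\xi,\tau)\in\mathbb R^n\times\mathbb R:\tau>0,\ |\xi|^2<\tau\},
\]
where $\chi_E$ is the characteristic function of $E$ and $a,b>0$ are to be chosen last (a plain characteristic function suffices because, unlike in Theorem~\ref{thm3.3}, no continuity is required here). Since $f=g=0$ for $\tau\le0$, \eqref{1.3} holds, and since $\int_{\mathbb R^n}\chi_E(\xi,\tau)\,d\xi=c_n\tau^{n/2}$ for $\tau>0$, one gets $\|f\|_{L^p(\mathbb R^n\times\mathbb R_T)}^p=c_na^p\int_0^T\tau^{p\gamma_1+n/2}\,d\tau<\infty$ for all $T$ (here $\gamma_1>0$ because $1-\lambda\sigma>0$ by \eqref{1.6.3}), so $f\in X^p$ and, likewise, $g\in X^q$. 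Moreover $f(x,t)=a\,t^{\gamma_1}$ and $g(x,t)=b\,t^{\gamma_2}$ whenever $|x|^2<t$, which already yields two of the four estimates in \eqref{1.22}, \eqref{1.23}.

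The core of the argument is a two-sided estimate for $J_\alpha f$ and $J_\beta g$. For the upper side, from $f(\xi,\tau)\le a\tau^{\gamma_1}\chi_{(0,\infty)}(\tau)$, the identity $\int_{\mathbb R^n}\Phi_\alpha(x-\xi,t-\tau)\,d\xi=\frac{(t-\tau)^{\alpha-1}}{\Gamma(\alpha)}\chi_{(0,\infty)}(t-\tau)$, and the Beta-function identity $\int_0^t(t-\tau)^{\alpha-1}\tau^{\gamma_1}\,d\tau=\frac{\Gamma(\alpha)\Gamma(\gamma_1+1)}{\Gamma(\alpha+\gamma_1+1)}t^{\alpha+\gamma_1}$, one obtains $J_\alpha f(x,t)\le C_1a\,t^{\alpha+\gamma_1}$ for all $(x,t)$; in particular $J_\alpha f$ and $J_\beta g$ are everywhere finite. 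For the lower side, fix $(x,t)$ with $|x|^2<t$ and restrict the integral in \eqref{I.7} to $\tau\in(t/2,3t/4)$ and $|\xi|<\sqrt t/2$, a box contained in $E$ since there $|\xi|^2<t/4<\tau$. On it $t-\tau\in(t/4,t/2)$ and $|x-\xi|^2<(|x|+|\xi|)^2<\tfrac{9}{4}t<9(t-\tau)$, hence $\Phi_\alpha(x-\xi,t-\tau)\ge c\,t^{\alpha-1-n/2}$; multiplying by the factor $a\tau^{\gamma_1}\ge c\,a\,t^{\gamma_1}$, by the volume $c_n't^{n/2}$ of the $\xi$-ball, and by the length $t/4$ of the $\tau$-interval gives $J_\alpha f(x,t)\ge c_2a\,t^{\alpha+\gamma_1}$ for $|x|^2<t$, and the identical computation gives $J_\beta g(x,t)\ge c_3b\,t^{\beta+\gamma_2}$ for $|x|^2<t$, with $C_1,c_2,c_3$ depending only on $n,\alpha,\beta,\lambda,\sigma$.

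Now invoke the identities $\alpha+\gamma_1=\gamma_2/\sigma$ and $\beta+\gamma_2=\gamma_1/\lambda$, both immediate from \eqref{2.16}. The lower bounds just obtained then read $J_\alpha f(x,t)\ge c_2a\,t^{\gamma_2/\sigma}$ and $J_\beta g(x,t)\ge c_3b\,t^{\gamma_1/\lambda}$ for $|x|^2<t$, so \eqref{1.22}, \eqref{1.23} hold with $N:=\min\{a,b,c_2a,c_3b\}$. For \eqref{1.2}: off the supports of $f$ and $g$ the inequalities are trivial, while where $f>0$ one has $f(x,t)=a\,t^{\gamma_1}$ with $|x|^2<t$, so $f\le K_1(J_\beta g)^\lambda$ follows from $J_\beta g\ge c_3b\,t^{\gamma_1/\lambda}$ provided $a\le K_1c_3^\lambda b^\lambda$; similarly $g\le K_2(J_\alpha f)^\sigma$ reduces to $b\le K_2c_2^\sigma a^\sigma$. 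Because $\lambda\sigma<1$, the choice $a:=(K_1c_3^\lambda K_2^\lambda c_2^{\lambda\sigma})^{1/(1-\lambda\sigma)}$ and $b:=K_2c_2^\sigma a^\sigma$ makes both inequalities hold (with equality), which completes the proof. Everything here is routine Gaussian estimation; the one point to watch is keeping the constants $c_2,c_3$ in the lower bounds independent of $a$ and $b$, so that the scalar system in $a,b$ can in fact be solved.
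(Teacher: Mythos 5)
Your construction is exactly the paper's: power functions $t^{\gamma_1},t^{\gamma_2}$ cut off to the paraboloidal region $\{|x|^2<t\}$, the kernel lower bound on that region (which the paper gets from Lemma \ref{lem7.4} and you re-derive directly by restricting to $|\xi|<\sqrt t/2$, $t/2<\tau<3t/4$), the exponent identities $\alpha+\gamma_1=\gamma_2/\sigma$, $\beta+\gamma_2=\gamma_1/\lambda$, and a final choice of scalar multipliers to enforce \eqref{1.2}. The argument is correct and essentially identical to the paper's proof, with only cosmetic differences (explicit $a,b$ system instead of the scaling reduction to $K_1=K_2=1$).
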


\begin{thm}\label{thm3.5}
 Suppose \eqref{1.4}, \eqref{1.5}, and \eqref{1.6.1} hold,
 \begin{equation}\label{T6.1}
  \sigma>\mu(\lambda),
 \end{equation}
 $r\in(p,\infty]$, and $s\in(s_0 ,\infty]$, where
 $$s_0 =\max\{q,q\sigma_0/\sigma\}=
 \begin{cases}
  q\sigma_0 /\sigma & \text{if }\sigma<\sigma_0 \\
  q & \text{if }\sigma\geq\sigma_0 .
 \end{cases}
 $$  
 Then there exist solutions
 \begin{equation}\label{T6.3}
  f\in L^p (\mathbb{R}^n \times\mathbb{R})\quad\text{ and }\quad g\in L^q (\mathbb{R}^n \times\mathbb{R})
 \end{equation}
 of the initial value problem \eqref{1.2}, \eqref{1.3} and a sequence
 $\{t_j \}\subset(0,1)$ such that $\lim_{j\to\infty}t_j =0$ and
 \begin{equation}\label{T6.4}
  \| f\|_{L^r (R_j )}=\| g\|_{L^s (R_j )}=\infty\quad\text{ for }j=1,2,\dots
 \end{equation}
 where
 \begin{equation}\label{T6.5}
  R_j =\{(x,t)\in\mathbb{R}^n \times\mathbb{R}:|x|<\sqrt{t_j}\text{ and }t_j <t<2t_j \}.
 \end{equation}
\end{thm}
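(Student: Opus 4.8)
The plan is to prove the $J$-version, Theorem~\ref{thm3.5}; the statement for the original problem then follows exactly as in Section~\ref{sec3}. Everything is done by an explicit construction. I build $f=\sum_j f_j$, $g=\sum_j g_j$, where each $(f_j,g_j)$ is a single parabolically self-similar "bump" solving \eqref{1.2}, \eqref{1.3}, concentrated in a small parabolic box
\[
B_j=\{(x,t):\,c_j<t<c_j+\delta_j^2,\ |x|<\sqrt{t-c_j}\}\subset R_j,\qquad c_j=\tfrac32 t_j,
\]
and the decreasing sequence $t_j\downarrow0$ in $(0,1)$ is chosen so fast that the $B_j$ (hence their closures) are pairwise disjoint; this requires only $\delta_j\le c\sqrt{t_j}$ with $c$ a small fixed constant. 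Because $J_\alpha$ and $J_\beta$ are positivity preserving, at a point lying in the (unique) box $B_k$ containing it one has $f=f_k\le K_1(J_\beta g_k)^\lambda\le K_1(J_\beta g)^\lambda$ and $g=g_k\le K_2(J_\alpha f_k)^\sigma\le K_2(J_\alpha f)^\sigma$, off all boxes both inequalities are trivial, and each $f_j,g_j$ vanishes for $t<0$. Hence $(f,g)$ solves \eqref{1.2}, \eqref{1.3} once each $(f_j,g_j)$ does and $\sum_j\|f_j\|_{L^p}^p<\infty$, $\sum_j\|g_j\|_{L^q}^q<\infty$; and since $B_j\subset R_j$, the blow-up \eqref{T6.4} reduces to that of the single bump on its own box.

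I take, for bump profiles $F,G\ge0$ (say mollified indicators of the unit ball of $\mathbb{R}^n$),
\[
f_j(x,t)=d_j(t-c_j)^{-\theta/2}F\!\big(\tfrac{x}{\sqrt{t-c_j}}\big)\chi_{\{c_j<t<c_j+\delta_j^2\}},\qquad g_j(x,t)=e_j(t-c_j)^{-\eta/2}G\!\big(\tfrac{x}{\sqrt{t-c_j}}\big)\chi_{\{c_j<t<c_j+\delta_j^2\}}.
\]
On the window $c_j<t<c_j+\delta_j^2$ the integral defining $J_\alpha f_j$ only sees $f_j$, so there $J_\alpha f_j$ equals $J_\alpha$ of the un-truncated self-similar function; since $\Phi_\alpha$ and $J_\alpha$ are covariant under the parabolic scaling $h(\cdot)\mapsto h(\lambda\cdot,\lambda^2\cdot)$ and $J_\alpha$ is $t$-translation invariant, this yields on the window
\[
J_\alpha f_j(x,t)=d_j(t-c_j)^{\alpha-\theta/2}\,\mathcal H\!\big(\tfrac{x}{\sqrt{t-c_j}}\big),\qquad J_\beta g_j(x,t)=e_j(t-c_j)^{\beta-\eta/2}\,\widetilde{\mathcal H}\!\big(\tfrac{x}{\sqrt{t-c_j}}\big),
\]
with profiles $\mathcal H=\mathcal H[F]$, $\widetilde{\mathcal H}=\widetilde{\mathcal H}[G]$ that are finite (this needs $\theta,\eta<n+2$), strictly positive on $\mathbb{R}^n$ by positivity of the heat kernel, and hence bounded below by constants $m,\tilde m>0$ on $\{|y|\le1\}$. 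Recalling that \eqref{2.16} gives $\gamma_1=\lambda(\gamma_2+\beta)$ and $\gamma_2=\sigma(\gamma_1+\alpha)$, I choose
\[
\theta\in\Big(\max\big\{\tfrac{n+2}{r},\,-2\gamma_1,\,2\alpha+\tfrac{n+2}{s\sigma}\big\},\,\tfrac{n+2}{p}\Big),\qquad \eta\in\Big(\max\big\{\tfrac{\theta}{\lambda}+2\beta,\,\tfrac{n+2}{s}\big\},\,\min\big\{(\theta-2\alpha)\sigma,\,\tfrac{n+2}{q}\big\}\Big),
\]
both intervals being nonempty precisely because of the hypotheses: $-2\gamma_1<\tfrac{n+2}{p}\Leftrightarrow p<r_0\Leftrightarrow\sigma>\mu(\lambda)$; $\ 2\alpha+\tfrac{n+2}{s\sigma}<\tfrac{n+2}{p}\Leftrightarrow s>q\sigma_0/\sigma$; $\ \tfrac{n+2}{r}<\tfrac{n+2}{p}\Leftrightarrow r>p$; the inequality $\tfrac{\theta}{\lambda}+2\beta<\tfrac{n+2}{q}$ is automatic since \eqref{1.5} together with $\sigma>\mu(\lambda)$ forces $\lambda>\lambda_0$, whence $\tfrac{(n+2-2q\beta)\lambda}{q}>\tfrac{n+2}{p}>\theta$; and $-2\gamma_2<\tfrac{n+2}{q}$, i.e. $q<\tfrac{(n+2)(\lambda\sigma-1)}{2\sigma(\alpha+\beta\lambda)}$, also holds under \eqref{1.4}, \eqref{1.5}, \eqref{1.6.1}, $\sigma>\mu(\lambda)$ (cf. Theorem~\ref{thm2.6}). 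With $\theta p<n+2<\theta r$ and $\eta q<n+2<\eta s$, power counting in parabolic polar coordinates about $(0,c_j)$ gives $f_j\in L^p$, $f_j\notin L^r(B_j)$, $g_j\in L^q$, $g_j\notin L^s(B_j)$, hence \eqref{T6.4}.

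It remains to close the two inequalities of \eqref{1.2} for the bump and to make the series converge. Inserting the formulas above, $g_j\le K_2(J_\alpha f_j)^\sigma$ reduces (on $|x|<\sqrt{t-c_j}$, and is trivial otherwise) to $e_j(t-c_j)^{\frac12((\theta-2\alpha)\sigma-\eta)}\le K_2 d_j^\sigma m^\sigma$, and $f_j\le K_1(J_\beta g_j)^\lambda$ to $d_j(t-c_j)^{\frac12(\lambda(\eta-2\beta)-\theta)}\le K_1 e_j^\lambda\tilde m^\lambda$; the two exponents of $t-c_j$ are positive by the (open) choice of $\eta$, so both hold on the whole window once $d_j=1$, $e_j=K_2 m^\sigma$, and $\delta_j$ is below a fixed constant depending only on $K_1,K_2,m,\tilde m,\lambda,\sigma$ — in particular $\delta_j=c\sqrt{t_j}$ with $c$ small works, and then $\sum_j\|f_j\|_{L^p}^p\asymp\sum_j\delta_j^{\,n+2-\theta p}$ and $\sum_j\|g_j\|_{L^q}^q\asymp\sum_j\delta_j^{\,n+2-\eta q}$ are convergent geometric series. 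I expect the bulk of the work to be the scaling computation behind the two displayed formulas for $J_\alpha f_j$ and $J_\beta g_j$ — in particular establishing that $\mathcal H,\widetilde{\mathcal H}$ are finite and bounded below by positive constants on the profile supports (this is where $\theta,\eta<n+2$ and positivity of $\Phi_\alpha$ enter) — together with the bookkeeping that ties the nonemptiness of the exponent intervals exactly to $r>p$, $s>s_0=\max\{q,q\sigma_0/\sigma\}$, $\sigma>\mu(\lambda)$, and their consequences $\lambda>\lambda_0$ and $q<\tfrac{(n+2)(\lambda\sigma-1)}{2\sigma(\alpha+\beta\lambda)}$.
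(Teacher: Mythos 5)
Your construction is correct, but it is genuinely different from the paper's. The exponent bookkeeping is essentially the same in both arguments: your conditions $\eta>\tfrac{\theta}{\lambda}+2\beta$, $\eta<(\theta-2\alpha)\sigma$, $\tfrac{n+2}{r}<\theta<\tfrac{n+2}{p}$, $\tfrac{n+2}{s}<\eta<\tfrac{n+2}{q}$ are exactly the paper's choice of a point $(\xi_1,\eta_1)=(\theta/2,\eta/2)$ in the open region below the two lines \eqref{4.23} and near the corner $P_0=\bigl(\tfrac{n+2}{2p},\tfrac{n+2}{2s_0}\bigr)$ of Remark \ref{rem5.1}, and your equivalences ($-2\gamma_1<\tfrac{n+2}{p}\Leftrightarrow\sigma>\mu(\lambda)$, $2\alpha+\tfrac{n+2}{s\sigma}<\tfrac{n+2}{p}\Leftrightarrow s>q\sigma_0/\sigma$, $\lambda>\lambda_0$ from \eqref{1.5} and \eqref{T6.1}) all check out. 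Where you diverge is the shape of the singular pieces and how the inequalities are closed. The paper takes a global background pair $(f_0,g_0)$ supported on the whole paraboloid $\Omega_0=\{|x|^2<t<1\}$ plus bumps $(f_j,g_j)$ that blow up at the \emph{future} endpoint $T_j$ of backward paraboloids $\Omega_j$; since Lemma \ref{lem7.7} gives the potential lower bound only on the late portion $\Omega_j^+$, the background is indispensable to dominate each bump on its early portion $\Omega_j^-$ (estimates \eqref{8.26.1}--\eqref{8.27.2}) and off the bumps. Your bumps are instead exactly parabolically self-similar about the \emph{initial} time $c_j$ of small forward paraboloids, so the scaling identity $J_\alpha f_j=d_j(t-c_j)^{\alpha-\theta/2}\mathcal H(x/\sqrt{t-c_j})$ makes each bump self-sustaining on its entire support (the ratio $f_j/(J_\beta g_j)^\lambda$ degenerates favorably as $t\to c_j^+$ because the potential itself blows up there, the spatial concentration of the source saving integrability as long as $\theta,\eta<n+2$), and no background pair is needed at all. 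What your route buys is a leaner gluing argument (disjoint supports, trivial superposition) and independence from Lemmas \ref{lem7.6} and \ref{lem7.7}; what it costs is the exact scaling computation for $J_\alpha$ of a self-similar profile and the uniform positive lower bound for $\mathcal H,\widetilde{\mathcal H}$ on the unit ball, which you correctly flag and which follow by restricting the defining integral to $s\in(\tfrac14,\tfrac12)$ and $z$ in a small ball where the profile is bounded below. (Your side remark that $-2\gamma_2<\tfrac{n+2}{q}$ is not actually needed for the interval to be nonempty, and the paper's preliminary reduction to $r,s$ near their lower endpoints, which you bypass by choosing $\theta>\tfrac{n+2}{r}$, $\eta>\tfrac{n+2}{s}$ directly, are both harmless differences.)
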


\begin{thm}\label{thm3.6}
 Suppose \eqref{1.4}, \eqref{1.5}, \eqref{1.6.1} and \eqref{T6.1} hold.  Let
 $$r_0
 =\frac{(n+2)(\lambda\sigma-1)}{2(\beta+\alpha\sigma)\lambda}\quad\text{
   and }\quad s_0 =\frac{(n+2)(\lambda\sigma-1)}{2(\alpha+\beta\lambda)\sigma}.$$
 Then $r_0 >p$, $s_0 >q$, and for each
 \begin{equation}\label{T7.1}
  r\in[r_0 ,\infty]\quad\text{ and }\quad s\in[s_0 ,\infty]
 \end{equation}
 there exist solutions
 \begin{equation}\label{T7.2}
  f\in X^p \quad\text{ and }\quad g\in X^q
 \end{equation}
 of the initial value problem \eqref{1.2}, \eqref{1.3} and a sequence
 $\{t_j \}\subset(1,\infty)$ such that $\lim_{j\to\infty}t_j =\infty$
 and $f$ and $g$ satisfy \eqref{T6.4} where $R_j$ is given by
 \eqref{T6.5}.
\end{thm}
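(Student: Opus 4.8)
The plan is to reduce to the endpoint exponents, to build an explicit blow‑up pair as a superposition of localized parabolically self‑similar ``cores'' concentrated near points $(0,t_j)$ with $t_j\to\infty$, and to verify \eqref{1.2}--\eqref{1.3} via scaling estimates for $J_\alpha$ and $J_\beta$; the only genuinely delicate point is stitching the cores together so as to respect \eqref{1.3}.

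\emph{Reductions and the critical exponents.} A direct computation rewrites $r_0>p$ as
\[
(n+2)(\lambda\sigma-1)>2p(\beta+\alpha\sigma)\lambda
\iff \sigma(n+2-2p\alpha)>2p\beta+\tfrac{n+2}{\lambda}
\iff \sigma>\mu(\lambda),
\]
which is \eqref{T6.1}; likewise $s_0>q$ rewrites as $\lambda>\tilde\mu(\sigma)$, where $\tilde\mu$ is obtained from $\mu$ by interchanging $(p,\alpha,\lambda)\leftrightarrow(q,\beta,\sigma)$, and this follows by combining \eqref{T6.1} with the normalization \eqref{1.5}. Next, since each $R_j$ in \eqref{T6.5} has finite measure, Hölder's inequality gives $\|f\|_{L^{r_0}(R_j)}\le|R_j|^{1/r_0-1/r}\|f\|_{L^r(R_j)}$ for every $r\in[r_0,\infty]$, so it suffices to produce a single pair with $\|f\|_{L^{r_0}(R_j)}=\|g\|_{L^{s_0}(R_j)}=\infty$. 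Put
\[
c:=\frac{\lambda(\beta+\alpha\sigma)}{\lambda\sigma-1},\qquad e:=\frac{\sigma(\alpha+\beta\lambda)}{\lambda\sigma-1},
\]
so that $2cr_0=2es_0=n+2$, $c=(e-\beta)\lambda$, $e=(c-\alpha)\sigma$, and $c>\alpha$, $e>\beta$ (all elementary); moreover $c<\tfrac{n+2}{2p}$ and $e<\tfrac{n+2}{2q}$ are precisely $r_0>p$ and $s_0>q$.

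\emph{The building block.} Let $P(x,t):=(|x|^2+|t|)^{1/2}$ be the parabolic gauge and fix a cutoff $\eta$ with $\eta\equiv1$ on $\{P<\tfrac12\}$ and $\operatorname{supp}\eta\subset\{P<1\}$. For $\bar t>0$ and a small scale $\rho>0$ set $\varphi^{f}_{\bar t}(x,t):=P(x,t-\bar t)^{-2c}\,\eta\!\big(\tfrac{x}{\rho},\tfrac{t-\bar t}{\rho^2}\big)$, and $\varphi^{g}_{\bar t}$ analogously with $2e$ in place of $2c$. Since $P^{-2c}$ is parabolically homogeneous of degree $-2c$ and $\Phi_\alpha(\mu y,\mu^2 s)=\mu^{2\alpha-n-2}\Phi_\alpha(y,s)$, the function $J_\alpha[P^{-2c}]$ is parabolically homogeneous of degree $-2(c-\alpha)$ and strictly positive; subtracting off the (bounded) contribution of $\{P\ge\rho/2\}$ yields, for suitable $A_0,B_0>0$ and all $(x,t)$ with $P(x,t-\bar t)<\rho$,
\[
J_\alpha\varphi^{f}_{\bar t}(x,t)\ge A_0\,P(x,t-\bar t)^{-2(c-\alpha)},\qquad\text{and}\qquad J_\beta\varphi^{g}_{\bar t}(x,t)\ge B_0\,P(x,t-\bar t)^{-2(e-\beta)}.
\]
Because $2c,2e<n+2$ we have $\varphi^f_{\bar t}\in L^p$ and $\varphi^g_{\bar t}\in L^q$; because $2cr_0=2es_0=n+2$, the functions $P(\cdot,\cdot-\bar t)^{-2cr_0}$ and $P(\cdot,\cdot-\bar t)^{-2es_0}$ fail to be integrable over any parabolic half‑ball centered at $(0,\bar t)$, so $\|\varphi^f_{\bar t}\|_{L^{r_0}(Q)}=\|\varphi^g_{\bar t}\|_{L^{s_0}(Q)}=\infty$ for every neighborhood $Q$ of $(0,\bar t)$. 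Using the identities $(e-\beta)\lambda=c$ and $(c-\alpha)\sigma=e$, the two displayed lower bounds show that the pair $(a\varphi^f_{\bar t},\,b\varphi^g_{\bar t})$ satisfies \eqref{1.2} throughout its support provided $a\le\mathrm{const}\cdot K_1(B_0 b)^\lambda$ and $b\le\mathrm{const}\cdot K_2(A_0 a)^\sigma$ (the constants accounting for the annulus $\{\rho/2<P<\rho\}$); this system is solvable precisely because $\lambda\sigma>1$ — it forces $a^{\lambda\sigma-1}\ge\mathrm{const}$, so it is enough to take $a$, hence $b$, large.

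\emph{Assembly, and the main obstacle.} Take $t_j=4^j$, so that the cylinders $R_j$ and the cores $\{P(\cdot,\cdot-t_j)<\rho\}$ (with $\rho<1$) are pairwise disjoint. On each core $f$ coincides with its $j$‑th summand while $J_\beta g\ge J_\beta g^{(j)}$ by nonnegativity of $\Phi_\beta$, so the local verification above upgrades to \eqref{1.2} for the full sum; \eqref{1.3} holds because every core sits in $\{t>0\}$; $f\in X^p$ and $g\in X^q$ because only finitely many cores meet any slab $\{t<T\}$ and each core lies in $L^p\times L^q$; and, since $(0,t_j)$ is a boundary point of $R_j$, the nonintegrability above gives $\|f\|_{L^{r_0}(R_j)}=\|g\|_{L^{s_0}(R_j)}=\infty$. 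The hard part is that isolated cores cannot simply be placed: each core decays to $0$ at the bottom of its support, and a short scaling argument at any putative switch‑on time $T_0$ (writing $f\sim(t-T_0)^{m_f}$, $g\sim(t-T_0)^{m_g}$, so $J_\alpha f\sim(t-T_0)^{m_f+\alpha}$, $J_\beta g\sim(t-T_0)^{m_g+\beta}$) shows that \eqref{1.2} forces $m_f\ge(m_g+\beta)\lambda$ and $m_g\ge(m_f+\alpha)\sigma$, hence $m_f(1-\lambda\sigma)\ge(\alpha\sigma+\beta)\lambda>0$, which is impossible when $\lambda\sigma>1$ — no nonzero sub‑solution pair can switch on from $0$ in finite time. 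My plan is therefore to join consecutive cores by thin ``bridge'' sub‑solutions, which sustain themselves because $J_\alpha,J_\beta$ of a function see only its past (so a bridge is fed by its own recent history), and, below $t_1$, to replace switching off by an infinite descending chain of cores accumulating at $t=0$ — a configuration of exactly the type produced in Theorem~\ref{thm3.5} — which is self‑consistent as $t\to0^+$ and still contributes only a finite amount to every slab $\{t<T\}$. Carrying out these patchings, i.e.\ matching amplitudes and decay rates across each interface, is the technical heart of the argument, and the exponent identities assembled above are exactly what make each matching feasible.
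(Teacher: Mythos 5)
Your reductions are fine (the identities $c=\xi_4=\tfrac{n+2}{2r_0}$, $e=\eta_4=\tfrac{n+2}{2s_0}$, the equivalence of $r_0>p$, $s_0>q$ with \eqref{T6.1} and \eqref{1.5}, and the H\"older reduction to the endpoint exponents all match \eqref{8.28.5} and the end of the paper's proof), but the construction itself has a genuine gap, and it sits exactly where you yourself locate the difficulty. The claimed building-block bound $J_\alpha\varphi^f_{\bar t}(x,t)\ge A_0\,P(x,t-\bar t)^{-2(c-\alpha)}$ \emph{for all} $(x,t)$ with $P(x,t-\bar t)<\rho$ is false near the bottom of the core: for a point with $t-\bar t\approx-\rho^2$ the portion of $\operatorname{supp}\varphi^f_{\bar t}$ lying in its past is a thin slab on which $\varphi^f_{\bar t}$ is bounded, so $J_\alpha\varphi^f_{\bar t}\to0$ as the point approaches the bottom tip, while the right-hand side stays of size $\rho^{-2(c-\alpha)}$. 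The positivity of the angular part of the homogeneous function $J_\alpha[P^{-2c}]$ in "past-facing" directions comes entirely from the tails of $P^{-2c}$ far in the past, which your cutoff removes; subtracting the bounded exterior contribution only gives the bound on a smaller ball $\{P<\rho'\}$, and on the remaining annulus of the support \eqref{1.2} must in fact fail, by your own switch-on computation ($m_f(1-\lambda\sigma)\ge(\alpha\sigma+\beta)\lambda$ is impossible when $\lambda\sigma>1$). Consequently the sum of isolated cores does not satisfy \eqref{1.2}, and the proposed repair --- "bridge" subsolutions between consecutive cores and an infinite descending chain accumulating at $t=0$ --- is only announced, not carried out; that patching is not a technicality but the entire content of the theorem.

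For comparison, the paper resolves precisely this point by adding a \emph{global} self-similar background pair $f_0=t^{-\xi_4}\raisebox{2pt}{$\chi$}_{\{|x|^2<t\}}$, $g_0=t^{-\eta_4}\raisebox{2pt}{$\chi$}_{\{|x|^2<t\}}$, where $(\xi_4,\eta_4)$ is the intersection point of the lines \eqref{4.23}: by Lemma \ref{lem7.6} this pair satisfies $f_0\le C(J_\beta g_0)^\lambda$, $g_0\le C(J_\alpha f_0)^\sigma$ in all of $\mathbb{R}^n\times\mathbb{R}$ (it never "switches on from zero" boundedly, being unbounded as $t\to0^+$ yet still in $X^p\times X^q$ since $\xi_4<\tfrac{n+2}{2p}$, $\eta_4<\tfrac{n+2}{2q}$), and the local blow-up pieces $f_j,g_j$ are supported on backward paraboloids $\Omega_j$ with singularity at the \emph{future} tip $t=T_j$; on the early half $\Omega_j^-$ they are dominated by the background ($f_j\le 3^{\xi_4}f_0$), and on the late half $\Omega_j^+$ they are self-sustained via Lemma \ref{lem7.7}. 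In effect, the "bridge" you would need is exactly this self-similar background, and without it (or an equivalent sustaining structure, constructed with estimates) your argument does not establish \eqref{1.2}, \eqref{1.3} for the assembled pair, hence does not yet prove \eqref{T6.4} for admissible solutions.
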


\begin{thm}\label{thm3.7}
  Suppose \eqref{1.1}--\eqref{1.5} and \eqref{1.6.2} hold.  Then the
  following statements are true.
 \begin{enumerate}
  \item[(i)] If $\lambda\sigma\geq1$ then
  $f=g=0\text{ a.e. in }\mathbb{R}^n \times\mathbb{R}$.
  \item[(ii)] If $\lambda\sigma<1$ then
  $f$ and $g$ satisfy \eqref{1.7}--\eqref{1.10} for all $T>0$.
 \end{enumerate}
\end{thm}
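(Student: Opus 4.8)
Here is how I would prove Theorem \ref{thm3.7}.

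The plan is to collapse the system \eqref{1.2}--\eqref{1.3} into a pair of scalar nonlinear integral inequalities and then analyze those. Put $F(t):=\|f(\cdot,t)\|_{L^\infty(\mathbb{R}^n)}$ and $G(t):=\|g(\cdot,t)\|_{L^\infty(\mathbb{R}^n)}$, which are a priori $[0,\infty]$-valued and vanish for $t<0$ by \eqref{1.3}. Using the elementary identity $\int_{\mathbb{R}^n}\Phi_\alpha(x-\xi,s)\,d\xi=s^{\alpha-1}/\Gamma(\alpha)$ for $s>0$ (and its analogue for $\Phi_\beta$), the definition \eqref{I.7} of $J_\alpha$, and \eqref{1.2}, one obtains for $t>0$
\[
 F(t)\le K_1\left(\tfrac{1}{\Gamma(\beta)}\int_0^t(t-\tau)^{\beta-1}G(\tau)\,d\tau\right)^{\!\lambda},\qquad G(t)\le K_2\left(\tfrac{1}{\Gamma(\alpha)}\int_0^t(t-\tau)^{\alpha-1}F(\tau)\,d\tau\right)^{\!\sigma}.
\]
These inequalities use nothing about the relative sizes of $n,p,q,\alpha,\beta$ beyond positivity.

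The hypothesis $2p\alpha\ge n+2$ enters only to \emph{get started}, namely to show that $F$ and $G$ are finite and locally bounded on $(0,\infty)$ with a crude power bound. Indeed, since $\|\Phi_\alpha(\cdot,s)\|_{L^{p'}(\mathbb{R}^n)}=c\,s^{\alpha-1-n/(2p)}$, Hölder's inequality gives, when $2p\alpha>n+2$,
\[
 J_\alpha f(x,t)\le c\,t^{\alpha-(n+2)/(2p)}\,\|f\|_{L^p(\mathbb{R}^n\times(0,t))},
\]
while when $p=1,\ \alpha=(n+2)/2$ the kernel $\Phi_\alpha$ is bounded, so $J_\alpha f(x,t)\le c\,\|f\|_{L^1(\mathbb{R}^n\times(0,t))}$ directly. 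Since $f\in X^p$, in either case $J_\alpha f$, hence $G\le K_2\|J_\alpha f\|_\infty^{\sigma}$, is locally bounded by a constant times a nonnegative power of $t$; inserting this into the first displayed inequality shows the same for $F$. (In the complementary regime $2p\alpha<n+2$ this step fails, as $J_\alpha f$ then need only lie in a Lebesgue space of finite exponent dictated by parabolic Sobolev embedding; this is exactly why Theorems \ref{thm3.1}--\ref{thm3.6} require substantial work, and is the source of the simplification here.)

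For part (i), substitute the $G$-inequality into the $F$-inequality and evaluate the resulting Beta integrals to get $F(t)\le C_0\,m_t^{\lambda\sigma}\,t^{\lambda(\beta+\alpha\sigma)}$, where $m_t:=\sup_{[0,t]}F<\infty$ and $C_0=C_0(K_1,K_2,\alpha,\beta,\lambda,\sigma)$. Since the right-hand side is nondecreasing in $t$, $m_t\le C_0 m_t^{\lambda\sigma}t^{\lambda(\beta+\alpha\sigma)}$; combined with $F(t)\to0$ as $t\to0^+$ (immediate from the $F$-inequality and local boundedness of $G$), this forces $m_t=0$ for all small $t>0$ whenever $\lambda\sigma\ge1$ — the case $\lambda\sigma=1$ because then $1\le C_0 t^{\lambda(\beta+\alpha\sigma)}$ fails for small $t$, the case $\lambda\sigma>1$ because otherwise $m_t\to\infty$ as $t\to0^+$. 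Thus $f=g=0$ near $t=0$; rerunning the argument with the time origin shifted to $t^*:=\sup\{t_0:f=g=0\text{ on }(-\infty,t_0)\}$ (the Beta-integral computation is unchanged under this shift) shows the set of such $t_0$ is both open and closed in $[0,\infty)$, whence $t^*=\infty$ and $f=g=0$ a.e.

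For part (ii), $\lambda\sigma<1$, iterate the two inequalities starting from the crude bound of the second paragraph: if $F(t)\le A_k t^{a_k}$ and $G(t)\le B_k t^{b_k}$ on $[0,T]$, evaluating the Beta integrals produces new bounds with $a_{k+1}=\lambda(\beta+b_k)$, $b_{k+1}=\sigma(\alpha+a_k)$, and $A_{k+1}=K_1\bigl(B_k\Gamma(b_k+1)/\Gamma(\beta+b_k+1)\bigr)^{\lambda}$, $B_{k+1}=K_2\bigl(A_k\Gamma(a_k+1)/\Gamma(\alpha+a_k+1)\bigr)^{\sigma}$. Because $\lambda\sigma<1$ the exponent recursion converges, $a_k\to\gamma_1$ and $b_k\to\gamma_2$ with $\gamma_1,\gamma_2$ as in \eqref{2.16}, and the accompanying constants converge to the unique fixed point of the map above, which a short Gamma-function manipulation identifies with the constants in \eqref{1.7}--\eqref{1.8}. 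Passing to the limit ($F(t)\le\liminf_k A_kt^{a_k}$, and similarly for $G$) yields \eqref{1.7} and \eqref{1.8}; one further application of the mean-value bound $J_\alpha f(x,t)\le\Gamma(\alpha)^{-1}\int_0^t(t-\tau)^{\alpha-1}F(\tau)\,d\tau$, using $\alpha+\gamma_1=\gamma_2/\sigma$ (and the analogue $\beta+\gamma_2=\gamma_1/\lambda$ for $J_\beta g$), gives \eqref{1.9} and \eqref{1.10}, the constants $M_2^{\lambda/(1-\lambda\sigma)}$ and $M_1^{\sigma/(1-\lambda\sigma)}$ again coming out of the Gamma-function bookkeeping.

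I expect the only real obstacle to be the "getting started" step: for $p>1$ with $2p\alpha=n+2$ the Hölder estimate above degenerates (an endpoint failure), so this borderline sub-case needs an additional argument. It is, however, irrelevant to the PDE applications of Section \ref{sec2}, since $2p\alpha\ge n+2$ together with \eqref{2.5} forces $p=1$ and $\alpha=(n+2)/2$, which is precisely the case in which $\Phi_\alpha$ is bounded and the a priori local bound is immediate. Everything else is routine once the scalar reduction and this a priori bound are in hand.
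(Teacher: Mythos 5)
Your overall strategy is the same as the paper's: use $2p\alpha\ge n+2$ to obtain a local a priori $L^\infty$ bound on $f$ and $g$ (the paper's Lemma \ref{lem4.1}), then discard the space variable via $\int_{\mathbb{R}^n}\Phi_\alpha(x-\xi,s)\,d\xi=s^{\alpha-1}/\Gamma(\alpha)$ and run a one-dimensional convolution/iteration argument giving both the vanishing in case $\lambda\sigma\ge1$ and the sharp constants when $\lambda\sigma<1$ (the paper's Lemma \ref{lem4.4}). Your parts (i) and (ii) are sound: the small-time contradiction plus the continuation in $t$ is essentially the paper's argument with the starting time $a$, and your iteration in (ii), which moves both the exponents $a_k,b_k$ and the constants, is a harmless variant of the paper's scheme, which first extracts the exact exponent $\gamma_1$ from an inequality of the form $1\le K_1K_2^\lambda B^\lambda\|f\|_{L^\infty}^{\lambda\sigma-1}(T-a)^{\lambda(\beta+\alpha\sigma)}$ and then iterates only the constant $\delta_j$.

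The genuine gap is in your ``getting started'' step. Theorem \ref{thm3.7} assumes only \eqref{1.4} and \eqref{1.6.2}, so the borderline case $p>1$, $2p\alpha=n+2$ (e.g. $p=2$, $\alpha=(n+2)/4$) is part of the statement, and there your one-shot H\"older estimate fails, as you yourself note (the time integral of $\|\Phi_\alpha(\cdot,s)\|_{L^{p'}(\mathbb{R}^n)}^{p'}$ diverges). Your remark that this case is irrelevant to Section \ref{sec2} is correct (there \eqref{2.5} forces $p=1$ at the endpoint), but it does not prove Theorem \ref{thm3.7} as stated, which is formulated for the $J_\alpha$ problem without \eqref{2.5}. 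The repair is not to aim at $L^\infty$ in one step: as in Lemma \ref{lem4.1}, pick $p_1>\max\{p,\sigma,(n+2)\sigma/(2\beta)\}$ and a reduced order $\widehat\alpha<\alpha$ with $0<\frac1p-\frac1{p_1}<\frac{2\widehat\alpha}{n+2}<1$; since $\Phi_\alpha\le C_T\,\Phi_{\widehat\alpha}$ on bounded time strips, the non-endpoint smoothing estimate of Lemma \ref{lem7.2} gives $J_\alpha f\in L^{p_1}(\mathbb{R}^n\times(0,T))$, hence $g\le K_2(J_\alpha f)^\sigma\in L^{p_1/\sigma}$, and a second application with $\widehat\beta\in(0,\beta]$ chosen so that $\sigma/p_1<2\widehat\beta/(n+2)<1$ lands $J_\beta g$, and hence $f$, in $L^\infty(\mathbb{R}^n\times(0,T))$; Lemma \ref{lem7.1} then bounds $g$. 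With this two-step bootstrap replacing your H\"older estimate, your argument covers all cases allowed by \eqref{1.6.2}.
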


\section{Preliminarys}\label{sec4}
In the section we provide some remarks and lemmas needed for the
proofs of our results in Section \ref{sec3} dealing with solutions of the
$J_\alpha$ problem \eqref{1.1}--\eqref{1.4}.

\begin{rem}\label{rem4.1}
  If \eqref{1.4}$_2$ holds and $\lambda\sigma<1$ then the functions
  $F,G:\mathbb{R}^n \times\mathbb{R}\to[0,\infty)$ defined in
  $\mathbb{R}^n \times(-\infty,0]$ by $F=G=0$ and defined for
  $(x,t)\in\mathbb{R}^n \times(0,\infty)$ by
 $$F(x,t)=F(t)=M^{\frac{\lambda\sigma}{1-\lambda\sigma}}_{1}t^{\gamma_1}\quad\text{
   and }\quad G(x,t)=G(t)=M^{\frac{\lambda\sigma}{1-\lambda\sigma}}_{2}
 t^{\gamma_2},$$
 where $\gamma_1 ,\gamma_2 ,M_1$, and $M_2$ are defined in
 \eqref{2.16}--\eqref{2.18}, satisfy
 \begin{equation}\label{8.1}
  F=(J_\beta G)^\lambda \quad\text{ and }\quad G=(J_\alpha F)^\sigma \quad\text{ in }\mathbb{R}^n \times\mathbb{R},
 \end{equation}
 which can be verified using the formula
\begin{equation}\label{L3.5}
\int^{t}_{0}\frac{(t-\tau)^{\alpha-1}\tau^{\beta-1}}{\Gamma(\alpha)\Gamma(\beta)}\,d\tau=\frac{t^{\alpha+\beta-1}}{\Gamma(\alpha+\beta)}\quad\text{ for }t, 
 \alpha,\beta>0.
\end{equation}
Even though $F,G\notin X^p$ for all $p\geq1$, these functions will be
useful in our analysis of solutions of \eqref{1.2}, \eqref{1.3} which
are in $X^p$ for some $p\geq1$.
 \end{rem}

\begin{rem}\label{rem4.2}
  It will be convenient to scale \eqref{1.2} as follows.  Suppose
  \eqref{1.4}$_2$ holds, $\lambda\sigma\neq 1$, $T>0$, and
  $f,g,\bar{f},\bar{g}:\mathbb{R}^n \times\mathbb{R}\to\mathbb{R}$ are nonnegative
  measurable functions such that $f=g=\bar{f}=\bar{g}=0$ in
  $\mathbb{R}^n \times(-\infty,0)$ and
 $$f(x,t)=(K_1K^{\lambda}_{2})^{\frac{1}{1-\lambda\sigma}}T^{\gamma_1}\bar{f}(\bar{x},\bar{t})\quad\text{and }\quad g(x,t)=(K_2 K^{\sigma}_{1})^{\frac{1}{1-\lambda\sigma}}T^{\gamma_2}\bar{g}(\bar{x},\bar{t})$$
 where $\gamma_1$ and $\gamma_2$ are defined in \eqref{2.16} and
 $x=T^{1/2}\bar{x}$ and $t=T\bar{t}$.  Then $f$ and $g$ satisfy
 \eqref{1.2}
 if and only if $\bar{f}$ and $\bar{g}$ satisfy
  \[\begin{rcases}
     0\leq\bar{f}\leq(J_\beta\bar{g})^\lambda \\
     0\leq\bar{g}\leq(J_\alpha\bar{f})^\sigma
    \end{rcases}
    \quad\text{ in }\mathbb{R}^n \times\mathbb{R}.\]
    Moreover, for $(x,t)\in\mathbb{R}^n \times(0,\infty)$ we have
\begin{align*}
 &\frac{f(x,t)}{t^{\gamma_1}}=(K_1K^{\lambda}_{2})^{\frac{1}{1-\lambda\sigma}}\frac{\bar{f}(\bar{x},\bar{t})}{\bar{t}^{\gamma_1}},\qquad\frac{g(x,t)}{t^{\gamma_2}}=(K_2 K^{\sigma}_{1})^{\frac{1}{1-\lambda\sigma}}\frac{\bar{g}(\bar{x},\bar{t})}{\bar{t}^{\gamma_2}},\\
 &\frac{J_\alpha f(x,t)}{t^{\gamma_2 /\sigma}}=(K_1 K^{\lambda}_{2})^{\frac{1}{1-\lambda\sigma}}\frac{J_\alpha \bar{f}(\bar{x},\bar{t})}{\bar{t}^{\gamma_2 /\sigma}}\quad\text{ and }\quad\frac{J_\beta g(x,t)}{t^{\gamma_1 /\lambda}}=(K_2 K^{\sigma}_{1})^{\frac{1}{1-\lambda\sigma}}\frac{J_\beta \bar{g}(\bar{x},\bar{t})}{\bar{t}^{\gamma_1 /\lambda}}.
\end{align*}
\end{rem}

\begin{lem}\label{lem4.1}
 Suppose \eqref{1.1}--\eqref{1.4} and \eqref{1.6.2} hold.  Then
 \begin{equation}\label{L1.2}
  f,g\in X^\infty .
 \end{equation}
\end{lem}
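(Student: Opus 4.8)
The plan is to exploit the smoothing of $J_\alpha$ and $J_\beta$ and the algebraic structure of \eqref{1.2}, alternating between $f$ and $g$. First I would record an elementary fact: if $h\colon\mathbb{R}^n\times\mathbb{R}\to[0,\infty)$ is measurable with $h=0$ in $\mathbb{R}^n\times(-\infty,0)$ and $\gamma>0$, then for $0<t\le T$,
\[
 J_\gamma h(x,t)\le\|h\|_{L^\infty(\mathbb{R}^n\times(0,t))}\int_0^t\!\!\int_{\mathbb{R}^n}\Phi_\gamma(x-\xi,t-\tau)\,d\xi\,d\tau=\frac{t^\gamma}{\Gamma(\gamma+1)}\,\|h\|_{L^\infty(\mathbb{R}^n\times(0,t))},
\]
since $\int_{\mathbb{R}^n}\Phi_\gamma(\eta,s)\,d\eta=s^{\gamma-1}/\Gamma(\gamma)$ and \eqref{L3.5}. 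In particular $h\in X^\infty\Rightarrow J_\gamma h\in X^\infty$ for every $\gamma>0$; call this $(\star)$.

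Second, I would use the mapping properties of $J_\gamma$ on the spaces $X^m$, $1\le m<\infty$, for nonnegative $h$ vanishing for $t<0$ — these are the parabolic Hardy--Littlewood--Sobolev estimates for the kernel $\Phi_\gamma$, which is homogeneous of degree $2\gamma-(n+2)$ under $(x,t)\mapsto(\delta x,\delta^2 t)$, and are available from \cite{T}:
\[
 J_\gamma h\in
 \begin{cases}
  X^\infty, & \text{if }2m\gamma>n+2,\text{ or }m=1\text{ and }2\gamma=n+2,\\
  \displaystyle\bigcap_{r<\infty}X^r, & \text{if }m>1\text{ and }2m\gamma=n+2.
 \end{cases}
\]
Call this $(\star\star)$. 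The first line is also immediate from H\"older's inequality on $\mathbb{R}^n\times(0,t)$ (for $m>1$ one computes $\|\Phi_\gamma\|_{L^{m'}(\mathbb{R}^n\times(0,t))}=c_{n,m,\gamma}\,t^{\gamma-(n+2)/(2m)}$, finite precisely when $2m\gamma>n+2$; for $m=1$, $\Phi_\gamma(\eta,s)\le c_{n,\gamma}\,s^{\gamma-1-n/2}$, bounded on $0<s<t$ when $2\gamma\ge n+2$); the second line is the endpoint case, which I would simply cite.

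The proof then splits on whether \eqref{1.6.2} is strict. If $2p\alpha>n+2$, or $p=1$ and $2\alpha=n+2$: since $f\in X^p$, the first line of $(\star\star)$ gives $J_\alpha f\in X^\infty$; then $0\le g\le K_2(J_\alpha f)^\sigma$ gives $g\in X^\infty$; then $(\star)$ gives $J_\beta g\in X^\infty$; then $0\le f\le K_1(J_\beta g)^\lambda$ gives $f\in X^\infty$. If instead $p>1$ and $2p\alpha=n+2$, then only the second line of $(\star\star)$ applies to $J_\alpha f$, so $J_\alpha f\in X^r$ for every $r<\infty$, whence $g\le K_2(J_\alpha f)^\sigma\in X^m$ for every $m<\infty$; choosing $m$ large enough that $2m\beta>n+2$ and applying the first line of $(\star\star)$ to $J_\beta g$ gives $J_\beta g\in X^\infty$, so $f\le K_1(J_\beta g)^\lambda\in X^\infty$, and then $(\star)$ gives $J_\alpha f\in X^\infty$, hence $g\le K_2(J_\alpha f)^\sigma\in X^\infty$. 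In both cases $f,g\in X^\infty$, which is \eqref{L1.2}.

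The only genuine difficulty is the endpoint estimate in the second line of $(\star\star)$; it is precisely what forces the extra bootstrap through $g$ in the borderline case $2p\alpha=n+2$, $p>1$. Everything else is bookkeeping with the constant $t^\gamma/\Gamma(\gamma+1)$ and H\"older's inequality on the finite-time strips $\mathbb{R}^n\times(0,T)$, using throughout that $h\ge0$ and $h=0$ for $t<0$ make $J_\gamma h$ a well-defined nonnegative function.
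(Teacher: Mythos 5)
Your argument is correct, but it is organized differently from the paper's proof, so a comparison is in order. The paper gives a single, case-free argument covering all of \eqref{1.6.2}: it fixes a large finite exponent $p_1$ and replaces $\alpha,\beta$ by smaller orders $\widehat\alpha<\alpha$, $\widehat\beta\le\beta$ chosen so that the strictly subcritical smoothing estimate (Lemma \ref{lem7.2}) applies, using the elementary monotonicity $J_\alpha f\le C\,J_{\widehat\alpha}f$ on a strip $\mathbb{R}^n\times(0,T)$; this yields $J_\alpha f\in L^{p_1}$, hence $g\in L^{p_1/\sigma}$, then $J_\beta g$ (hence $f$) in $L^\infty$, and finally $g\in L^\infty$ by Lemma \ref{lem7.1}. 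You instead split cases: in the supercritical case ($2p\alpha>n+2$, or $p=1$ with $2\alpha=n+2$) you get $J_\alpha f\in X^\infty$ in one stroke by H\"older against $\|\Phi_\alpha\|_{L^{p'}(\mathbb{R}^n\times(0,t))}$ --- a computation that, unlike Lemma \ref{lem7.2}, needs no restriction $2\alpha<n+2$ --- and in the critical case $p>1$, $2p\alpha=n+2$ you use $J_\alpha f\in X^r$ for finite $r$ and bootstrap through $g$. Two small remarks: the ``endpoint'' statement you propose to cite is not a genuine endpoint inequality, since for a finite target it is an immediate consequence of Lemma \ref{lem7.2} (for $r\in[p,\infty)$ one has $\tfrac1p-\tfrac1r<\tfrac1p=\tfrac{2\alpha}{n+2}<1$ when $p>1$ and $2p\alpha=n+2$); and the claim ``$J_\alpha f\in X^r$ for every $r<\infty$'' should be restricted to $r\ge p$ (membership for $r<p$ need not follow, the strips having infinite measure), which is harmless because your bootstrap only needs one large exponent $m$ with $2m\beta>n+2$. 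What your route buys is the avoidance of the auxiliary orders $\widehat\alpha,\widehat\beta$ and a very explicit kernel computation; what the paper's route buys is a uniform argument that never has to treat the borderline $2p\alpha=n+2$ separately.
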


\begin{proof}
 Let $T>0$ be fixed.  To prove \eqref{L1.2} it suffices by \eqref{1.3} to prove
 $$f,g\in L^\infty (\mathbb{R}^n \times(0,T)).$$
 Choose
 \begin{equation}\label{L1.3}
  p_1 >\max\{p,\sigma,\frac{(n+2)\sigma}{2\beta}\}.
 \end{equation}
 Define $\widehat\alpha\in\mathbb{R}$ by
 $$\frac{2\widehat{\alpha}}{n+2}+\frac{1}{2p_1}=\frac{1}{p}\leq\frac{2\alpha}{n+2}$$
 by \eqref{1.6.2}.  Then $0<\widehat{\alpha}<\alpha$ and
 $$0<\frac{1}{p}-\frac{1}{p_1}=\frac{2\widehat{\alpha}}{n+2}-\frac{1}{2p_1}<\frac{2\widehat{\alpha}}{n+2}<\frac{1}{p}\leq1$$
 by \eqref{1.4}$_1$.  Hence by \eqref{1.3}, \eqref{1.1}$_1$, and Lemma
 \ref{lem7.2} we have
 $$(J_\alpha f)|_{\mathbb{R}^n \times(0,T)}\leq C(J_{\widehat{\alpha}}f)|_{\mathbb{R}^n \times(0,T)}\in L^{p_1}(\mathbb{R}^n \times(0,T)).$$
 Consequently by \eqref{1.2}$_2$ we have
 \begin{equation}\label{L1.4}
  g\in L^{p_1 /\sigma}(\mathbb{R}^n \times(0,T)).
 \end{equation}
 By \eqref{L1.3} there exists $\widehat{\beta}\in(0,\beta]$ such that
 $$\frac{\sigma}{p_1}<\frac{2\widehat{\beta}}{n+2}<1.$$
 Then by \eqref{1.2}$_1$, \eqref{1.3}, \eqref{L1.4}, Lemma \ref{lem7.2} we have 
 $$f^{1/\lambda}|_{\mathbb{R}^n \times(0,T)}\leq C(J_\beta g)|_{\mathbb{R}^n \times(0,T)}\leq C(J_{\widehat{\beta}}g)|_{\mathbb{R}^n \times(0,T)}\in L^\infty (\mathbb{R}^n \times(0,T)).$$
 Thus by \eqref{1.2}$_2$, \eqref{1.3}, and Lemma \ref{lem7.1} we have
 $$g\in L^\infty (\mathbb{R}^n \times(0,T)).$$
\end{proof}

\begin{lem}\label{lem4.2}
 Suppose \eqref{1.1}--\eqref{1.4} and \eqref{1.6.1} hold,
 \begin{equation}\label{L2.1}
  \sigma<\mu(\lambda),
 \end{equation}
 \begin{equation}\label{L2.2}
  p_1 \in[p,\infty),\text{ and }f\in X^{p_1}.
 \end{equation}
 Then either
 \begin{equation}\label{L2.3}
  f\in X^\infty
 \end{equation}
 or there exists a constant
 \begin{equation}\label{L2.4}
  C_0 =C_0 (n,\lambda,\sigma,\alpha,\beta,p)>0
 \end{equation}
 such that $f\in X^{p_2}$ for some $p_2 \in(p_1 ,\infty)$ satisfying
 \begin{equation}\label{L2.5}
  \frac{1}{p_1}-\frac{1}{p_2}>C_0 .
 \end{equation}
\end{lem}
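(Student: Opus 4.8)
The plan is to combine the two inequalities into a single self-improving integral estimate for $f$ and then run one step of a bootstrap in which every application of a smoothing operator $J_{\alpha_1}$ ($0<\alpha_1\le\alpha$) or $J_{\beta_1}$ ($0<\beta_1\le\beta$) gains $\tfrac{2\alpha_1}{n+2}$, resp.\ $\tfrac{2\beta_1}{n+2}$, in the reciprocal Lebesgue exponent on each time strip. Since $J_\alpha,J_\beta$ are order preserving and $f,g\ge0$ vanish for $t<0$, \eqref{1.2} gives
\[
 0\le f\le K_1\bigl(J_\beta g\bigr)^{\lambda}\le K_1K_2^{\lambda}\Bigl(J_\beta\bigl((J_\alpha f)^{\sigma}\bigr)\Bigr)^{\lambda}\qquad\text{in }\mathbb R^n\times\mathbb R .
\]
Fix $T>0$; by \eqref{1.3} the restrictions of $J_\alpha f,J_\beta g$ to $\mathbb R^n\times(0,T)$ depend only on $f,g$ there, so all of the following is done on this strip, on which $\|f\|_{L^{p_1}}<\infty$ by \eqref{L2.2}. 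The tools are: (i) $J_\alpha h\le C_T\,J_{\alpha_1}h$ on the strip when $0<\alpha_1\le\alpha$ (since $\Phi_\alpha/\Phi_{\alpha_1}$ is bounded in $\tau\in(0,T)$); (ii) the parabolic Hardy--Littlewood--Sobolev estimate of Lemma~\ref{lem7.2}, under which $J_{\alpha_1}$ maps $L^{a}(\mathbb R^n\times(0,T))$ into $L^{b}(\mathbb R^n\times(0,T))$ whenever $\tfrac1a-\tfrac{2\alpha_1}{n+2}\le\tfrac1b\le\tfrac1a$ with $\tfrac1b>0$ (with, if necessary, an arbitrarily small extra loss at the lower endpoint) and into $L^{\infty}$ when $\tfrac1a<\tfrac{2\alpha_1}{n+2}$; (iii) $J_\alpha$ maps $L^{\infty}$ into $L^{\infty}$ on the strip (Lemma~\ref{lem7.1}); and the analogues with $\beta,\beta_1$. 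By \eqref{1.6.1}, $\tfrac{2\alpha}{n+2}<\tfrac1p\le1$, so (ii) applies with $\alpha_1=\alpha$.

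Now the bookkeeping, with $\tfrac1{p_1}\le\tfrac1p$. If $\tfrac1{p_1}<\tfrac{2\alpha}{n+2}$, then $J_\alpha f\in L^{\infty}$, so $g\le K_2(J_\alpha f)^{\sigma}\in L^{\infty}$, so $J_\beta g\in L^{\infty}$, so $f\in L^{\infty}$; letting $T$ vary gives \eqref{L2.3}. If $\tfrac1{p_1}=\tfrac{2\alpha}{n+2}$, pick $\alpha_1\in(0,\alpha)$ with $\alpha-\alpha_1<\beta/\sigma$ small; then $J_\alpha f\le C_TJ_{\alpha_1}f\in L^{r_1}$ with $\tfrac1{r_1}$ close to $\tfrac{2(\alpha-\alpha_1)}{n+2}$, so $g\in L^{r_1/\sigma}$ with $\tfrac\sigma{r_1}<\tfrac{2\beta}{n+2}$, so again $J_\beta g\in L^{\infty}$, $f\in L^{\infty}$, giving \eqref{L2.3}. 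So assume $\tfrac1{p_1}>\tfrac{2\alpha}{n+2}$; taking $\alpha_1=\alpha$ (or, in the borderline case below, slightly less) we get $J_\alpha f\le C_TJ_{\alpha_1}f\in L^{r_1}$ with $\tfrac1{r_1}$ as close as we wish from above to $\tfrac1{p_1}-\tfrac{2\alpha}{n+2}>0$, and $g\le K_2(J_\alpha f)^{\sigma}\in L^{r_1/\sigma}$. If $\sigma\bigl(\tfrac1{p_1}-\tfrac{2\alpha}{n+2}\bigr)<\tfrac{2\beta}{n+2}$, then taking $\tfrac1{r_1}$ close enough to $\tfrac1{p_1}-\tfrac{2\alpha}{n+2}$ gives $\tfrac\sigma{r_1}<\tfrac{2\beta}{n+2}$, so $J_\beta g\in L^{\infty}$, $f\in L^{\infty}$, i.e.\ \eqref{L2.3}. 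Otherwise $\sigma\bigl(\tfrac1{p_1}-\tfrac{2\alpha}{n+2}\bigr)\ge\tfrac{2\beta}{n+2}$; choosing $\alpha_1\le\alpha$ close enough to $\alpha$ that still $\tfrac\sigma{r_1}>\tfrac{2\beta}{n+2}$ (which forces $\alpha_1<\alpha$ only in the equality case), and taking $\beta_1=\beta$, we get $J_\beta g\le C_TJ_{\beta}g\in L^{r_2}$ with $\tfrac1{r_2}$ close to $\tfrac\sigma{r_1}-\tfrac{2\beta}{n+2}>0$, hence $f\le K_1(J_\beta g)^{\lambda}\in L^{p_2}$ with $\tfrac1{p_2}=\lambda/r_2$. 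As $T$ was arbitrary, $f\in X^{p_2}$, and a short computation gives
\[
 \frac1{p_1}-\frac1{p_2}\ \ge\ \frac1{p_1}(1-\lambda\sigma)+\frac{2\lambda(\alpha\sigma+\beta)}{n+2}-\varepsilon
\]
for an $\varepsilon>0$ (collecting the endpoint losses in (ii) and the amount $\alpha_1$ was pushed below $\alpha$) that can be made as small as we please.

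It remains to bound the right-hand side below independently of $p_1$. Clearing denominators with the help of \eqref{1.6.1}, the hypothesis \eqref{L2.1} ($\sigma<\mu(\lambda)$, $\mu$ as in \eqref{2.11}) is equivalent to $(n+2)(\lambda\sigma-1)<2p\lambda(\alpha\sigma+\beta)$, i.e.\ $\tfrac{2\lambda(\alpha\sigma+\beta)}{n+2}>\tfrac{\lambda\sigma-1}{p}$; combined with $\tfrac1{p_1}\le\tfrac1p$ this yields
\[
 \frac1{p_1}(1-\lambda\sigma)+\frac{2\lambda(\alpha\sigma+\beta)}{n+2}\ \ge\ \frac{2\lambda(\alpha\sigma+\beta)}{n+2}-\frac{\max\{0,\lambda\sigma-1\}}{p}\ =:\ \delta_0>0 ,
\]
so $\varepsilon<\delta_0/2$ gives \eqref{L2.3} or \eqref{L2.5} with $C_0=\delta_0/2$, which has the form \eqref{L2.4}. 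The part that needs the most care is the borderline behaviour of (ii): the estimate with the \emph{sharp} target exponent $\tfrac1b=\tfrac1a-\tfrac{2\alpha_1}{n+2}$ may fail, forcing one to replace $\alpha$ (resp.\ $\beta$) by a slightly smaller value and, in one case, to route the argument through $L^{\infty}$; one must then check both that all intermediate reciprocal exponents $\tfrac1{r_1},\tfrac1{r_2}$ stay in $(0,1)$ — immediate since $\tfrac{2\alpha_1}{n+2}\le\tfrac{2\alpha}{n+2}<1$ and the perturbation is small — and that the gain still exceeds $\delta_0/2$, which holds because as $\alpha_1\uparrow\alpha$ and $\varepsilon\downarrow0$ the displayed lower bound for $\tfrac1{p_1}-\tfrac1{p_2}$ converges to $\tfrac1{p_1}(1-\lambda\sigma)+\tfrac{2\lambda(\alpha\sigma+\beta)}{n+2}\ge\delta_0$.
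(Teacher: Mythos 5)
Your overall skeleton is the same as the paper's: one bootstrap step through the smoothing estimate of Lemma~\ref{lem7.2} (with the orders shrunk slightly to stay strictly inside its hypotheses), followed by the observation that $\sigma<\mu(\lambda)$ together with $p_1\ge p$ makes the per-step gain $\tfrac{1-\lambda\sigma}{p_1}+\tfrac{2\lambda(\alpha\sigma+\beta)}{n+2}$ bounded below by a constant depending only on $n,\lambda,\sigma,\alpha,\beta,p$; that last computation is correct. The genuine gap is in the middle of the chain: you estimate $g$ \emph{only} through $g\le K_2(J_\alpha f)^\sigma$, never using the hypothesis $g\in X^q$ from \eqref{1.1}, and you then apply Lemma~\ref{lem7.2} to $g\in L^{r_1/\sigma}$. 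Lemma~\ref{lem7.2} requires both exponents to lie in $[1,\infty]$ and requires $\tfrac{2\beta}{n+2}<1$, and the hypotheses of Lemma~\ref{lem4.2} (which do \emph{not} include \eqref{1.5}) allow parameters for which $r_1/\sigma<1$ and $2\beta\ge n+2$: for instance $n=1$, $p=q=1$, $\alpha=0.1$, $\beta=10$, $\lambda=1$, $\sigma=7$ satisfies \eqref{1.6.1} and $\sigma<\mu(\lambda)\approx 8.2$, yet with $p_1=p$ one has $\sigma\bigl(\tfrac1{p_1}-\tfrac{2\alpha}{n+2}\bigr)\approx 6.5$, so $\tfrac{\sigma}{r_1}>1$ and $\tfrac{2\beta}{n+2}=\tfrac{20}{3}>1$, and both of your sub-cases in the regime $\tfrac1{p_1}>\tfrac{2\alpha}{n+2}$ invoke Lemma~\ref{lem7.2} outside its range. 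The ``replace $\beta$ by a slightly smaller value'' remark only addresses the sharp endpoint; if $2\beta\ge n+2$ you are forced down to some $\beta_1$ with $2\beta_1<n+2$, and then the full term $\tfrac{2\lambda\beta}{n+2}$ in your displayed lower bound for $\tfrac1{p_1}-\tfrac1{p_2}$ is no longer what the argument produces, while the sub-$L^1$ exponent $r_1/\sigma$ cannot be repaired by shrinking orders at all.

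This is exactly the point where the paper's proof does something you omitted: it replaces the intermediate exponent by $p_4=\max\{q,p_3/\sigma\}$ (legitimate because $g\in X^q$ is assumed), which is always $\ge 1$, and it dispatches the case $2p_4\beta\ge n+2$ by Lemma~\ref{lem4.1} with the roles of $(f,p,\alpha,\lambda)$ and $(g,q,\beta,\sigma)$ interchanged, concluding $f\in X^\infty$ there; in the remaining case one automatically has $\tfrac{2\beta}{n+2}<\tfrac1{p_4}\le 1$, so the $\beta$-smoothing is in range with (essentially) the full order, and the inequality $\tfrac1{p_4}\le\tfrac{\sigma}{p_3}$ shows that passing to $p_4$ only improves the gain, so the uniform constant survives. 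Your argument can be repaired by inserting these two devices (use $\max\{q,r_1/\sigma\}$ in place of $r_1/\sigma$, and route the case $2\beta\max\{q,r_1/\sigma\}\ge n+2$ through Lemma~\ref{lem4.1} or an $L^\infty$ argument), but as written the step from $g$ to $J_\beta g$ fails on a nonempty part of the admissible parameter range, and the quantitative bound you state is not what the repaired argument yields there.
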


\begin{proof}
  If $2p_1 \alpha\geq n+2$ then \eqref{L2.3} follows from Lemma
  \ref{lem4.1} with $p=p_1$.  Hence we can assume
 \begin{equation}\label{L2.8}
  2p_1 \alpha<n+2.
 \end{equation}
 By \eqref{1.4}, \eqref{1.6.1}, and \eqref{L2.1} there exists
 $$\varepsilon=\varepsilon(n,\lambda,\sigma,\alpha,\beta, p)>0$$
 such that
 \begin{equation}\label{L2.9}
  \alpha_\varepsilon :=\alpha-\varepsilon>0,\quad \beta_\varepsilon :=\beta-\varepsilon>0
 \end{equation}
 and
 \begin{equation}\label{L2.10}
  \sigma<\frac{2p\beta_\varepsilon}{n+2-2p\alpha_\varepsilon}+\frac{n+2}{(n+2-2p\alpha_\varepsilon )\lambda}.
 \end{equation}
 By \eqref{L2.8} and \eqref{L2.9} we have
 \begin{equation}\label{L2.11}
  n+2-2p_1 \alpha_\varepsilon >2p_1 \varepsilon .
 \end{equation}
 By \eqref{L2.8}, \eqref{L2.9}, \eqref{1.4}, and \eqref{L2.2} there
 exists $p_3 \in(p_1 ,\infty)$ such that
 \begin{equation}\label{L2.12}
  \frac{1}{p_1}-\frac{1}{p_3}=\frac{2\alpha_\varepsilon}{n+2}<\frac{2\alpha}{n+2}<\frac{1}{p_1}\leq1.
 \end{equation}
 Hence by \eqref{1.3}, \eqref{L2.2}, and Lemma \ref{lem7.2} we have
 $J_\alpha f\in X^{p_3}$ and thus from \eqref{1.2} and \eqref{1.1} we find that
 \begin{equation}\label{L2.13}
  g\in X^{p_4}\text{ where }p_4 =\max\{q,p_3 /\sigma\}\geq1
 \end{equation}
 by \eqref{1.4}.  We can assume
 \begin{equation}\label{L2.14}
  2p_4 \beta<n+2
 \end{equation}
 for otherwise by Lemma \ref{lem4.1} with $q=p_4$ and the roles of
 $(f,p,\lambda,\alpha)$ and $(g,q,\sigma,\beta)$ interchanged, we have
 \eqref{L2.3} holds.
 
 It follows from 
\eqref{L2.14}, \eqref{L2.9}, and \eqref{L2.13}
that there exists
 $p_5 \in(p_4 ,\infty)$ such that
 \begin{equation}\label{L2.15}
  \frac{1}{p_4}-\frac{1}{p_5}=\frac{2\beta_\varepsilon}{n+2}<\frac{2\beta}{n+2}<\frac{1}{p_4}\leq1.
 \end{equation}
 Thus by \eqref{1.3}, \eqref{L2.13} and Lemma \ref{lem7.2} we have
 $J_\beta g\in X^{p_5}$ and consequently by \eqref{1.2}
 \begin{equation}\label{L2.16}
  f\in X^{p_2}\text{ where }p_2 =p_5 /\lambda.
 \end{equation}
 Moreover, it follows from \eqref{L2.16}, \eqref{L2.15},
 \eqref{L2.13}, \eqref{L2.12}, \eqref{L2.11}, and \eqref{L2.2}$_1$
 that
 \begin{align*}
  \frac{1}{p_1}-\frac{1}{p_2}&=\frac{1}{p_1}-\frac{\lambda}{p_5}=\frac{1}{p_1}-\lambda\left(\frac{1}{p_4}-\frac{2\beta_\varepsilon}{n+2}\right)\\
  &\ge\frac{1}{p_1}-\lambda\left(\frac{\sigma}{p_3}-\frac{2\beta_\varepsilon}{n+2}\right)\\
  &=\frac{1}{p_1}-\lambda\left(\sigma\left(\frac{1}{p_1}-\frac{2\alpha_\varepsilon}{n+2}\right)-\frac{2\beta_\varepsilon}{n+2}\right)\\
  &=\frac{\lambda}{(n+2)p_1}\left[\frac{n+2}{\lambda}-(\sigma(n+2-2\alpha_\varepsilon p_1 )-2\beta_\varepsilon p_1)\right]\\
  &=\frac{\lambda(n+2-2\alpha_\varepsilon p_1)}{(n+2)p_1}\left[\frac{2\beta_\varepsilon p_1}{n+2 -2\alpha_\varepsilon p_1}+\frac{n+2}{(n+2-2\alpha_\varepsilon p_1 )\lambda}-\sigma\right]\\
  &\geq\frac{2\lambda\varepsilon}{n+2}\left[\frac{2p\beta_\varepsilon}{n+2-2p\alpha_\varepsilon}+\frac{n+2}{(n+2 -2p\alpha_\varepsilon )\lambda}-\sigma\right]\\
  &=C_0 (n,\lambda,\sigma,\alpha,\beta,p)>0
 \end{align*}
 by \eqref{L2.10}. 
\end{proof} 

\begin{lem}\label{lem4.3}
 Suppose \eqref{1.1}--\eqref{1.4} and \eqref{1.6.1} hold and $\sigma<\mu(\lambda)$.  Then
 \begin{equation}\label{L2.17}
  f,g\in X^\infty .
 \end{equation}
\end{lem}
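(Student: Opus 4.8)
The plan is to bootstrap the Lebesgue exponent of $f$ finitely many times via Lemma~\ref{lem4.2} until $f\in X^\infty$, and then read off $g\in X^\infty$ almost for free.

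First I would iterate Lemma~\ref{lem4.2} starting from $p_1^{(0)}:=p$, for which $f\in X^{p_1^{(0)}}$ by \eqref{1.1}. The standing hypotheses \eqref{1.1}--\eqref{1.4}, \eqref{1.6.1} and $\sigma<\mu(\lambda)$ are exactly the hypotheses of Lemma~\ref{lem4.2}, so that lemma gives that either $f\in X^\infty$, or there is $p_1^{(1)}\in(p_1^{(0)},\infty)$ with $f\in X^{p_1^{(1)}}$ and $\tfrac1{p_1^{(0)}}-\tfrac1{p_1^{(1)}}>C_0$, where $C_0=C_0(n,\lambda,\sigma,\alpha,\beta,p)>0$ is the constant of \eqref{L2.4}. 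In the second case the hypotheses of Lemma~\ref{lem4.2} are still in force with $p_1^{(1)}$ in place of $p_1$ (only the exponent has changed and it still lies in $[p,\infty)$), so I can apply the lemma again, and so on. The point on which everything turns is that $C_0$ \emph{does not depend on the current exponent}: if $f\in X^\infty$ were never reached I would obtain a strictly increasing sequence $p=p_1^{(0)}<p_1^{(1)}<\cdots$ in $(0,\infty)$ with $f\in X^{p_1^{(k)}}$ and $\tfrac1{p_1^{(k)}}-\tfrac1{p_1^{(k+1)}}>C_0$ for every $k$, whence $\tfrac1{p_1^{(k)}}<\tfrac1p-kC_0$, which is eventually negative, contradicting $\tfrac1{p_1^{(k)}}>0$. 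Hence the iteration terminates and $f\in X^\infty$.

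Next I would deduce $g\in X^\infty$ directly. Fix $T>0$. Since $f=0$ in $\mathbb{R}^n\times(-\infty,0)$ and $\int_{\mathbb{R}^n}\Phi_\alpha(y,s)\,dy=s^{\alpha-1}/\Gamma(\alpha)$ for $s>0$ (from \eqref{I.9}), formula \eqref{L3.5} with second parameter $1$ gives, for $0<t<T$,
\[
0\le J_\alpha f(x,t)\le\|f\|_{L^\infty(\mathbb{R}^n\times(0,T))}\int_0^t\frac{(t-\tau)^{\alpha-1}}{\Gamma(\alpha)}\,d\tau=\frac{t^\alpha}{\Gamma(\alpha+1)}\,\|f\|_{L^\infty(\mathbb{R}^n\times(0,T))},
\]
and $J_\alpha f=0$ for $t\le0$, so $J_\alpha f\in X^\infty$; then $0\le g\le K_2(J_\alpha f)^\sigma$ by \eqref{1.2}$_2$ and $X^\infty$ is closed under nonnegative powers and under multiplication by positive constants, so $g\in X^\infty$, which is \eqref{L2.17}. (One could instead note that $f\in X^p\cap X^\infty$ interpolates to $f\in X^{p_1}$ for all $p_1\in[p,\infty)$, pick $p_1$ with $2p_1\alpha\ge n+2$, and apply Lemma~\ref{lem4.1} with $p$ replaced by $p_1$ to get $f,g\in X^\infty$ at once.)

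There is no real analytic obstacle here beyond what Lemmas~\ref{lem4.1} and \ref{lem4.2} already supply; the only thing to be careful about is the bookkeeping in the first step — checking that the hypotheses of Lemma~\ref{lem4.2} persist verbatim along the iteration (they do, since only the exponent $p_1$ moves and it stays in $[p,\infty)$) and that finiteness of the iteration rests solely on $C_0$ being independent of the exponent being bootstrapped.
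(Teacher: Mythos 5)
Your proposal is correct and follows essentially the same route as the paper: iterate Lemma \ref{lem4.2} finitely many times, using that $C_0$ in \eqref{L2.4} is independent of the current exponent (the paper notes $m>1/(pC_0)$ iterations suffice, which is exactly your harmonic-decrement argument), to get $f\in X^\infty$. Your subsequent bound $J_\alpha f(x,t)\le \|f\|_{L^\infty(\mathbb{R}^n\times(0,T))}\,t^\alpha/\Gamma(\alpha+1)$ followed by \eqref{1.2}$_2$ is precisely the paper's appeal to \eqref{1.2}, \eqref{1.3}, and Lemma \ref{lem7.1}, just written out explicitly.
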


\begin{proof}
  Starting with the assumption that $f$ satisfies \eqref{1.1}$_1$ and
  iterating Lemma \ref{lem4.2} a finite number of times ($m$ times is enough if
  $m>1/(pC_0 )$) we find that $f\in X^\infty$ and hence \eqref{L2.17}
  follows from \eqref{1.2}, \eqref{1.3}, and Lemma \ref{lem7.1}.
\end{proof}

\begin{lem}\label{lem4.4}
  Suppose $f,g:\mathbb{R}^n \times\mathbb{R}\to\mathbb{R}$ are
  nonnegative measurable functions satisfying \eqref{1.2} and
  \eqref{1.3} where
 $$\lambda,\sigma,\alpha,\beta,K_1 ,K_2 \in(0,\infty)$$
 and for some $h\in\{f,g\}$ we have
 \begin{equation}\label{L3.0}
  h\in X^\infty \quad\text{ and }\quad\| h\|_{L^\infty (\mathbb{R}^n \times\mathbb{R})}\neq0.
 \end{equation}
 Then $\lambda\sigma<1$ and $f$ and $g$ satisfy
 \eqref{1.7}--\eqref{1.10} for all $T>0$.
\end{lem}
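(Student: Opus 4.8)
The plan is to iterate a single self-reproducing bootstrap step, exploiting the Beta-function identity \eqref{L3.5}. By the symmetry of \eqref{1.2}, \eqref{1.3} under interchanging $(f,\lambda,\alpha,K_1)$ with $(g,\sigma,\beta,K_2)$ (which swaps \eqref{1.7} with \eqref{1.8}, \eqref{1.9} with \eqref{1.10}, and $M_1$ with $M_2$) we may assume $h=f$. The bootstrap step is: \emph{if} $f(x,t)\le Bt^a$ on $\mathbb{R}^n\times(0,T)$ for some $B>0$, $a\ge0$, \emph{then}, applying in turn the operator $J_\alpha$, the inequality \eqref{1.2}$_2$, the operator $J_\beta$, and the inequality \eqref{1.2}$_1$ --- and using $\int_{\mathbb{R}^n}\Phi_\alpha(y,s)\,dy=s^{\alpha-1}/\Gamma(\alpha)$ together with \eqref{L3.5}, i.e.\ $J_\alpha(\tau^a)(t)=\frac{\Gamma(a+1)}{\Gamma(a+\alpha+1)}t^{a+\alpha}$ for $t>0$, at each of the two $J$-steps --- one gets $f(x,t)\le B't^{a'}$ on $\mathbb{R}^n\times(0,T)$, where
\[
  a'=\lambda\sigma\,a+(\alpha\sigma+\beta)\lambda,\qquad B'=c(a)\,B^{\lambda\sigma},\qquad c(a):=K_1K_2^{\lambda}\Big(\tfrac{\Gamma(a+1)}{\Gamma(a+\alpha+1)}\Big)^{\lambda\sigma}\Big(\tfrac{\Gamma((a+\alpha)\sigma+1)}{\Gamma((a+\alpha)\sigma+\beta+1)}\Big)^{\lambda}.
\]
Since $h=f\in X^\infty$, for each $T>0$ the number $A(T):=\|f\|_{L^\infty(\mathbb{R}^n\times(0,T))}$ is finite, so the hypothesis of the step holds with $(B_0,a_0)=(A(T),0)$; iterating gives $f(x,t)\le B_kt^{a_k}$ on $\mathbb{R}^n\times(0,T)$ with $a_{k+1}=\lambda\sigma\,a_k+(\alpha\sigma+\beta)\lambda$ and $B_{k+1}=c(a_k)B_k^{\lambda\sigma}$. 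Note $c(a)$ is nonincreasing in $a$ (by log-convexity of the Gamma function), so $c(a_k)\le c(0)$ for all $k$.

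I would first prove $\lambda\sigma<1$ by contradiction. If $\lambda\sigma\ge1$, then $a_{k+1}\ge a_k+(\alpha\sigma+\beta)\lambda$, so $a_k\to\infty$. Take $T=1$; for $t_0\in(0,1)$ put $\phi_k:=B_kt_0^{a_k}$, so $\phi_0=A(1)$ and $\phi_{k+1}=c(a_k)\,t_0^{(\alpha\sigma+\beta)\lambda}\,\phi_k^{\lambda\sigma}\le c(0)\,t_0^{(\alpha\sigma+\beta)\lambda}\,\phi_k^{\lambda\sigma}$. Assuming $A(1)>0$ (otherwise $f=0$ a.e.\ on $\mathbb{R}^n\times(0,1)$ already) and choosing $t_0$ so small that $c(0)\,t_0^{(\alpha\sigma+\beta)\lambda}\le\tfrac12A(1)^{1-\lambda\sigma}$, an induction using $\lambda\sigma\ge1$ gives $\phi_k\le 2^{-k}A(1)\to0$; hence $f(\cdot,t_0)=0$ a.e., and since this holds for all sufficiently small $t_0>0$, $f=g=0$ a.e.\ on $\mathbb{R}^n\times(0,\delta)$ for some $\delta>0$ (vanishing of $g$ then following from \eqref{1.2}$_2$). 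A maximal-vanishing and time-translation argument finishes: let $\delta^*=\sup\{\delta>0:f=g=0\text{ a.e.\ on }\mathbb{R}^n\times(-\infty,\delta)\}$, which is positive by the above and the initial condition \eqref{1.3}; if $\delta^*<\infty$, the functions $\tilde f(x,t):=f(x,t+\delta^*)$ and $\tilde g(x,t):=g(x,t+\delta^*)$ again solve \eqref{1.2}, \eqref{1.3} with $\tilde f\in X^\infty$ (because $J_\alpha\tilde f(x,t)=J_\alpha f(x,t+\delta^*)$, and similarly for $J_\beta\tilde g$), so applying the previous step to $(\tilde f,\tilde g)$ contradicts the maximality of $\delta^*$; hence $\delta^*=\infty$, i.e.\ $f\equiv g\equiv0$, contradicting $\|h\|_{L^\infty(\mathbb{R}^n\times\mathbb{R})}\ne0$. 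Therefore $\lambda\sigma<1$.

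With $\lambda\sigma<1$, the map $a\mapsto\lambda\sigma\,a+(\alpha\sigma+\beta)\lambda$ is a contraction with fixed point $\gamma_1$, so $a_k\nearrow\gamma_1$; using \eqref{2.16} one checks $(\gamma_1+\alpha)\sigma=\gamma_2$, $\gamma_2+\beta=\gamma_1/\lambda$, and $\gamma_1+\alpha=\gamma_2/\sigma$, whence $c(\gamma_1)=c_*:=K_1K_2^{\lambda}\big(\tfrac{\Gamma(\gamma_1+1)}{\Gamma(\gamma_1+\alpha+1)}\big)^{\lambda\sigma}\big(\tfrac{\Gamma(\gamma_2+1)}{\Gamma(\gamma_2+\beta+1)}\big)^{\lambda}$. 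Since $c(a_k)\to c_*$ and the recursion $\log B_{k+1}=\log c(a_k)+\lambda\sigma\log B_k$ has contraction ratio $\lambda\sigma<1$ with convergent forcing, $B_k\to B_*$, where $B_*^{1-\lambda\sigma}=c_*$; passing to the limit in $f(x,t)\le B_kt^{a_k}$ gives $f(x,t)\le B_*t^{\gamma_1}$ on $\mathbb{R}^n\times(0,T)$, and taking the supremum over $0<t<T$ yields \eqref{1.7} after verifying, from \eqref{2.17}, that $B_*=(K_1K_2^\lambda)^{1/(1-\lambda\sigma)}M_1^{\lambda\sigma/(1-\lambda\sigma)}$. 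Finally, feeding $f(x,t)\le B_*t^{\gamma_1}$ successively through $J_\alpha$, \eqref{1.2}$_2$, and $J_\beta$ (again via \eqref{L3.5}) gives $J_\alpha f(x,t)\le B_*\tfrac{\Gamma(\gamma_1+1)}{\Gamma(\gamma_1+\alpha+1)}t^{\gamma_2/\sigma}$ on $\mathbb{R}^n\times(0,T)$ --- and similarly constant multiples of $t^{\gamma_2}$ for $g$ and of $t^{\gamma_1/\lambda}$ for $J_\beta g$ --- so taking suprema and matching the accumulated Gamma-factor products against \eqref{2.17}, \eqref{2.18} (an algebraic check using the identity $M_1^{\lambda\sigma/(1-\lambda\sigma)}\tfrac{\Gamma(\gamma_1+1)}{\Gamma(\gamma_1+\alpha+1)}=M_2^{\lambda/(1-\lambda\sigma)}$ and its analogue with $1$ and $2$ interchanged) produces exactly \eqref{1.9}, \eqref{1.8}, \eqref{1.10}.

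The step I expect to be the main obstacle is the $\lambda\sigma\ge1$ case --- converting the merely qualitative membership $f\in X^\infty$ into a genuine vanishing of $f$ near $t=0$; the monotonicity $c(a_k)\le c(0)$ is precisely what keeps the $\phi_k$-recursion clean enough to force $\phi_k\to0$ for small $t_0$, after which the time-translation step is routine. Once $\lambda\sigma<1$ is established, the derivation of \eqref{1.7}--\eqref{1.10} is a contraction-fixed-point iteration followed by Gamma-function bookkeeping.
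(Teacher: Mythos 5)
Your proof is correct (modulo routine a.e.-in-$(x,t)$ bookkeeping when you evaluate the iterated bounds at a fixed time $t_0$: intersect the countably many full-measure sets first, or argue for a.e.\ small $t$), and while it runs on the same engine as the paper --- composing the two inequalities in \eqref{1.2} and evaluating the resulting time convolutions with the Beta identity \eqref{L3.5} --- the architecture differs in two genuine ways. To prove $\lambda\sigma<1$, the paper introduces $a$, the first time $\|f\|_{L^\infty(\mathbb{R}^n\times(-\infty,t))}$ becomes positive, applies the composed inequality once on $(a,T]$ to get $1\le K_1K_2^\lambda B^\lambda\|f\|_{L^\infty(\mathbb{R}^n\times(a,T))}^{\lambda\sigma-1}(T-a)^{\lambda(\beta+\alpha\sigma)}$, and lets $T\to a^+$; this single inequality simultaneously rules out $\lambda\sigma\ge1$ and supplies the seed bound $\delta_1t^{\gamma_1}$. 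You instead start from the flat bound $A(T)t^0$, iterate with evolving exponents $a_k$ and constants $B_k$, and when $\lambda\sigma\ge1$ force vanishing of $f$ near $t=0$, which you then propagate by a maximal-interval/time-translation argument to contradict $\|h\|_{L^\infty}\ne0$. Likewise, in the case $\lambda\sigma<1$ the paper's bootstrap keeps the exponent frozen at $\gamma_1$ and drives only the constant $\delta_j$ to its fixed point, whereas you let $(a_k,B_k)\to(\gamma_1,B_*)$ jointly from the crude starting data. The paper's route buys brevity (the $\lambda\sigma\ge1$ case is dispatched in one limit and no continuation step is needed); yours buys independence from the auxiliary time $a$ and from any sharp seed bound, using only the qualitative finiteness $A(T)<\infty$, at the cost of the translation argument and the monotonicity of $a\mapsto c(a)$ (digamma monotonicity). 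Your final Gamma-function bookkeeping is right: $c(\gamma_1)=K_1K_2^\lambda M_1^{\lambda\sigma}$ and $M_1^{\lambda\sigma/(1-\lambda\sigma)}\,\Gamma(\gamma_1+1)/\Gamma(\alpha+\gamma_1+1)=M_2^{\lambda/(1-\lambda\sigma)}$ (and its $1\leftrightarrow2$ analogue) do reproduce exactly the constants in \eqref{1.7}--\eqref{1.10}.
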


\begin{proof}
  
  Suppose $h=f$.  The proof when $h=g$ is similar and will be omitted.
  By \eqref{1.2} we have
 \begin{equation}\label{L3.1}
  0\leq f\leq
K_1K_2^\lambda
(J_\beta ((J_\alpha f)^\sigma ))^\lambda \quad\text{ in }\mathbb{R}^n \times\mathbb{R}
 \end{equation}
 and by \eqref{1.3} and \eqref{L3.0} there exists $a\geq0$ such that
 \begin{equation}\label{L3.2}
  \| f\|_{L^\infty (\mathbb{R}^n \times(-\infty,a])}=0
 \end{equation}
 and
 \begin{equation}\label{L3.3}
  0<\| f\|_{L^\infty (\mathbb{R}^n \times(-\infty,t))}<\infty \quad\text{ for }t>a.
 \end{equation}
 Thus
 \begin{equation}\label{L3.4}
  J_\alpha f=0\quad\text{ in }\mathbb{R}^n \times(-\infty,a].
 \end{equation}
 
 Let $T>a$ be momentarily fixed.  Then for
 $(x,t)\in\mathbb{R}^n \times(a,T]$ we find from \eqref{L3.2} and
 \eqref{L3.3} that
 $$\frac{(J_\alpha f)(x,t)}{\| f\|_{L^\infty (\mathbb{R}^n \times(a,T))}}\leq\int^{t}_{a}\frac{(t-\tau)^{\alpha-1}}{\Gamma(\alpha)}\,d\tau=\frac{(t-a)^\alpha}{\Gamma(\alpha+1)}.$$
 Hence for $(x,t)\in\mathbb{R}^n \times(a,T]$ we obtain from
 \eqref{L3.4} and \eqref{L3.5}that
 \[\frac{J_\beta ((J_\alpha f)^\sigma )(x,t)}{\| f\|^{\sigma}_{L^\infty (\mathbb{R}^n \times(a,T))}}\leq\frac{1}{\Gamma(\alpha+1)^\sigma}\int^{t}_{a}\frac{(t-\tau)^{\beta-1}}{\Gamma(\beta)}(\tau-a)^{\alpha\sigma}\,d\tau=B(t-a)^{\beta+\alpha\sigma}\]
 where
 $$B=\frac{\Gamma(\alpha\sigma+1)}{\Gamma(\alpha+1)^\sigma \Gamma(\beta+\alpha\sigma+1)}.$$
Thus by \eqref{L3.1} we see that
 \begin{align*}
  \| f\|_{L^\infty (\mathbb{R}^n \times(a,T))}&\leq
K_1K_2^\lambda
\|(J_\beta ((J_\alpha f)^\sigma ))^\lambda \|_{L^\infty (\mathbb{R}^n \times(a,T))}\\
  &\leq K_1K_2^\lambda B^\lambda \| f\|_{L^\infty (\mathbb{R}^n \times(a,T))}^{\lambda\sigma}(T-a)^{\lambda(\beta+\alpha\sigma)}\quad\text{for }T>a,
 \end{align*}
 which by \eqref{L3.3} implies
 \begin{equation}\label{L3.6}
  1\leq K_1K_2^\lambda B^\lambda \| f\|^{\lambda\sigma-1}_{L^\infty (\mathbb{R}^n \times(a,T))}(T-a)^{\lambda(\beta+\alpha\sigma)}\quad\text{ for }T>a.
 \end{equation}
 Thus $\lambda\sigma<1$ for otherwise sending $T$ to $a$ in
 \eqref{L3.6} gives a contradiction. Hence from \eqref{L3.6}
 and \eqref{2.16}$_1$ we get
 \[\| f\|_{L^\infty (\mathbb{R}^n \times(a,T))}\leq 
(K_1K_2^\lambda)^{\frac{1}{1-\lambda\sigma}}
B^{\frac{\lambda}{1-\lambda\sigma}}(T-a)^{\gamma_1}\quad\text{ for }T>a\]
 which together with \eqref{L3.2} and the nonnegativity of $a$ implies
 \begin{equation}\label{L3.7}
  \| f\|_{L^\infty (\mathbb{R}^n \times(0,t))}\leq (K_1K_2^\lambda)^{\frac{1}{1-\lambda\sigma}}B^{\frac{\lambda}{1-\lambda\sigma}}t^{\gamma_1}\quad\text{ for }t>0.
 \end{equation}

 Suppose for some $\delta>0$ we have
 \begin{equation}\label{L3.8}
  \| f\|_{L^\infty (\mathbb{R}^n \times(0,t))}\leq\delta t^{\gamma_1}\quad\text{ for }t>0.
 \end{equation}
 Then by \eqref{L3.2} and \eqref{L3.5} we have for
 $(x,t)\in\mathbb{R}^n \times(0,\infty)$ that
 \begin{equation}\label{L3.9}
  (J_\alpha f)(x,t)\leq\delta\int^{t}_{0}\frac{(t-\tau)^{\alpha-1}}{\Gamma(\alpha)}\tau^{\gamma_1}\,d\tau=\delta\frac{\Gamma(\gamma_1 +1)}{\Gamma(\alpha+\gamma_1 +1)}t^{\alpha+\gamma_1}
 \end{equation}
 and hence by \eqref{L3.4}, \eqref{L3.5}, and \eqref{2.17} we find for
 $(x,t)\in\mathbb{R}^n \times(0,\infty)$ that
 \begin{align*}
  [J_\beta ((J_\alpha f)^\sigma )(x,t)]^\lambda &\leq\left[\delta^\sigma \frac{\Gamma(\gamma_1 +1)^\sigma}{\Gamma(\alpha+\gamma_1 +1)^\sigma}\int^{t}_{0}\frac{(t-\tau)^{\beta-1}}{\Gamma(\beta)}\tau^{(\alpha+\gamma_1 )\sigma}\,d\tau\right]^\lambda \\
  &=\left[\delta^\sigma \frac{\Gamma(\gamma_1 +1)^\sigma}{\Gamma(\alpha+\gamma_1 +1)^\sigma}\,\frac{\Gamma((\alpha+\gamma_1 )\sigma+1)}{\Gamma(\beta+(\alpha+\gamma_1 )\sigma+1)}t^{\beta+(\alpha+\gamma_1 )\sigma}\right]^\lambda \\
  &=(\delta M_1 )^{\lambda\sigma}t^{\gamma_1}
 \end{align*}
 because by \eqref{2.16} we have
 \begin{equation}\label{L3.10}
  (\beta+(\alpha+\gamma_1 )\sigma)\lambda=\gamma_1 \quad\text{ and }\quad(\alpha+\gamma_1 )\sigma=\gamma_2 .
 \end{equation}
 Thus by \eqref{L3.1}
 \begin{equation}\label{L3.11}
  \| f\|_{L^\infty (\mathbb{R}^n \times(0,t))}\leq K_1K_2^\lambda(\delta M_1 )^{\lambda\sigma}t^{\gamma_1}\quad\text{ for }t>0.
 \end{equation}
 Next defining a sequence $\{\delta_j \}\subset(0,\infty)$ by
 \[
\delta_1 =K_1K_2^\lambda B^{\frac{\lambda}{1-\lambda\sigma}}\quad\text{and}\quad
 \delta_{j+1}=K_1K_2^\lambda(\delta_j M_1 )^{\lambda\sigma}
\] 
and using
 $0<\lambda\sigma<1$ we see that
 $\delta_j \to (K_1K_2^\lambda)^{\frac{1}{1-\lambda\sigma}}M^{\frac{\lambda\sigma}{1-\lambda\sigma}}_{1}$ as
 $j\to\infty$.  It therefore follows from \eqref{L3.7}, \eqref{L3.8},
 and \eqref{L3.11} that $f$ satisfies \eqref{1.7} for $T>0$.  Thus
 \eqref{L3.9} holds with
 $\delta=(K_1K_2^\lambda)^{\frac{1}{1-\lambda\sigma}}M^{\frac{\lambda\sigma}{1-\lambda\sigma}}_{1}$ and so from
 \eqref{1.2}$_2$ we find that
 \begin{align*} 
  \| g\|_{L^\infty \times(0,t))}&\leq K_2\left((K_1K_2^\lambda)^{\frac{1}{1-\lambda\sigma}}M^{\frac{\lambda\sigma}{1-\lambda\sigma}}_{1}\frac{\Gamma(\gamma_1 +1)}{\Gamma(\alpha+\gamma_1 +1)}\right)^\sigma t^{\sigma(\alpha+\gamma_1 )}\\
  &=(K_2K_1^\sigma)^{\frac{1}{1-\lambda\sigma}}M^{\frac{\lambda\sigma}{1-\lambda\sigma}}_{2}t^{\gamma_2}\quad\text{ for }t>0
 \end{align*}
 by \eqref{L3.10}, \eqref{2.17}, and \eqref{2.18}.  That is $g$ satisfies \eqref{1.8} for $T>0$.  Finally, \eqref{1.9} and \eqref{1.10} follow from \eqref{1.7}, \eqref{1.8}, and \eqref{L3.5}.
\end{proof}

\section{Proofs of results for $J_\alpha$ problem.}\label{sec5}
In this section we prove our results stated in Section \ref{sec3}
concerning pointwise bounds for nonnegative solutions $f$ and $g$ of
\eqref{1.1}--\eqref{1.4}.  As explained in Section \ref{sec3}, these
results immediately imply Theorems \ref{thm2.1}--\ref{thm2.7} in
Section \ref{sec2}. 

\begin{proof}[Proof of Theorem \ref{thm3.7} (resp. Theorems
  \ref{thm3.1} and \ref{thm3.2})] It follows from Lemma \ref{lem4.1}
  (resp. Lemma \ref{lem4.3}) that $f,g\in X^\infty$.  Hence Theorem
  \ref{thm3.7} (resp. Theorems \ref{thm3.1} and \ref{thm3.2})
  follow(s) from Lemma \ref{lem4.4}.
\end{proof}

\begin{proof}[Proof of Theorem \ref{thm3.3}]
Let
\begin{equation}\label{PT2.1}
 m=\max\left\{\left(\frac{N_2}{M_2}\right)^{\frac{\lambda^2 \sigma}{1-\lambda\sigma}},\left(\frac{N_1}{M_1}\right)^{\frac{\lambda\sigma}{1-\lambda\sigma}}\right\}
\end{equation}
and $a_1 =(m+1)/2$.  By \eqref{1.6.3} and \eqref{1.15} we have $m\in(0,1)$ and thus
\begin{equation}\label{PT2.2}
 0<m<a_1 <1.
\end{equation}
It therefore follows from \eqref{1.6.3} that
$a^{1/\lambda}_{1}<a^{\sigma}_{1}$ and hence there exists $a_2$ such
that
$$0<a^{1/\lambda}_{1}<a_2 <a^{\sigma}_{1}<1$$
which together with \eqref{PT2.1} and \eqref{PT2.2} gives
\begin{equation}\label{PT2.3}
 \frac{a^{\lambda}_{2}}{a_1}>1,\quad\frac{a^{\sigma}_{1}}{a_2}>1,
\end{equation}
\begin{equation}\label{PT2.4}
  a_1
  >\left(\frac{N_1}{M_1}\right)^{\frac{\lambda\sigma}{1-\lambda\sigma}},\quad\text{
    and }\quad a_2 >\left(\frac{N_2}{M_2}\right)^{\frac{\lambda\sigma}{1-\lambda\sigma}}.
\end{equation}

By Remark \ref{rem4.2}, we can assume $K_1 =K_2 =T=1$.  For
$(x,t)\in\mathbb{R}^n \times\mathbb{R}$ and $\delta\in(0,1)$ let
\begin{equation}\label{PT2.5}
 F_\delta (x,t)=F_\delta (t)=\psi_\delta (t)F(t)\quad\text{ and }\quad
 G_\delta (x,t)=G_\delta (t)=\psi_\delta (t)G(t)
\end{equation}
where $F$ and $G$ are as in Remark \ref{rem4.1} and $\psi_\delta \in C^\infty (\mathbb{R}\to[0,1])$ satisfies
$$\psi_\delta (t)=
   \begin{cases}
    1\text{ if }t\leq1\\
    0\text{ if }t\geq1+\delta.
   \end{cases}$$
Then for $1\leq t\leq1+\delta$
\begin{align*}
 J_\alpha F(t)&-J_\alpha F_\delta (t)=\int^{t}_{1}\frac{(t-\tau)^{\alpha-1}}{\Gamma(\alpha)}F(\tau)(1-\psi_\delta (\tau))\,d\tau\\
 &\leq\int^{t}_{1}\frac{(t-\tau)^{\alpha-1}}{\Gamma(\alpha)}F(\tau)\,d\tau\leq F(1+\delta)\int^{t}_{1}\frac{(t-\tau)^{\alpha-1}}{\Gamma(\alpha)}\,d\tau\\
 &=F(1+\delta)\frac{(t-1)^\alpha}{\Gamma(\alpha+1)}\leq F(2)\frac{\delta^\alpha}{\Gamma(\alpha+1)}
\end{align*}
and similarly
$$J_\beta G(t)-J_\beta G_\delta (t)\leq G(2)\frac{\delta^\beta}{\Gamma(\beta+1)}.$$
Thus by \eqref{8.1} we have for $1\leq t\leq1+\delta$ that
\begin{align*}
 \frac{J_\alpha F_\delta (t)}{J_\alpha F(t)}&=\frac{J_\alpha F(t)-(J_\alpha F(t)-J_\alpha F_\delta (t))}{G(t)^{1/\sigma}}\\
 &\geq1-\frac{F(2)\delta^\alpha}{\Gamma(\alpha+1)G(1)^{1/\sigma}}\\
 &=1-C\delta^\alpha 
\end{align*}
and similarly for $1\le t\le 1+\delta$ that
$$\frac{J_\beta G_\delta (t)}{J_\beta G(t)}\geq1-C\delta^\beta$$
where $C=C(\lambda,\sigma,\alpha,\beta)>0$.  Hence choosing
$\delta\in(0,1)$ sufficiently small and using \eqref{8.1} and
\eqref{PT2.3} we find for $1\leq t\leq1+\delta$ that
\begin{equation}\label{PT2.6}
 G_\delta (t)\leq G(t)= (J_\alpha F(t))^\sigma \leq\sqrt{\frac{a^{\sigma}_{1}}{a_2}}(J_\alpha F_\delta (t))^\sigma
\end{equation}
and
\begin{equation}\label{PT2.7}
 F_\delta (t)\leq F(t)=(J_\beta G(t))^\lambda \leq\sqrt{\frac{a^{\lambda}_{2}}{a_1}}(J_\beta G_\delta (t))^\lambda
\end{equation}
which by \eqref{PT2.5} and \eqref{8.1} holds for all other $t$ as well.

Next let $\varphi(x)=e^{-\psi(x)}$ where $\psi(x)=\sqrt{1+|x|^2}-1$.
Then for $\varepsilon\in(0,1)$, $\gamma>1$, and
$|\xi-x|<\gamma\sqrt{2}$ we have
$$\frac{\varphi(\varepsilon\xi)}{\varphi(\varepsilon x)}=e^{-(\psi(\varepsilon\xi)-\psi(\varepsilon x)}\geq e^{-\varepsilon|\xi-x|}\geq e^{-\varepsilon\gamma\sqrt{2}}.$$
Thus defining
$f_\varepsilon ,g_\varepsilon :\mathbb{R}^n
\times\mathbb{R}\to[0,\infty)$ by
$$f_\varepsilon (x,t)=\varphi(\varepsilon x)a_1 F_\delta
(t)\quad\text{ and }\quad g_\varepsilon (x,t)=\varphi(\varepsilon
x)^\sigma a_2 G_\delta (t)$$
we find for $|\xi-x|<\gamma\sqrt{2}$ and $\tau\in\mathbb{R}$ that
\[f_\varepsilon (\xi,\tau)\geq\varphi(\varepsilon x)e^{-\varepsilon
  \gamma\sqrt{2}}a_1 F_\delta (\tau)\quad\text{ and }\quad g_\varepsilon
(\xi,\tau)\geq\varphi(\varepsilon x)^\sigma
e^{-\sigma\varepsilon\gamma\sqrt{2}}a_2 G_\delta (\tau).\]
Hence for $(x,t)\in\mathbb{R}^n \times(0,2)$ we have
\begin{equation}\label{PT2.8}
 J_\alpha f_\varepsilon (x,t)\geq\varphi(\varepsilon x)e^{\varepsilon\gamma\sqrt{2}}a_1 \int^{t}_{0}\frac{(t-\tau)^{\alpha-1}}{\Gamma(\alpha)}F_\delta (\tau)\int_{|\xi-x|<\gamma\sqrt{2}}\Phi_1 (x-\xi,t-\tau)\,d\xi \,d\tau
\end{equation}
and
\begin{equation}\label{PT2.9}
 J_\beta g_\varepsilon (x,t)\geq\varphi(\varepsilon x)^\sigma e^{-\sigma\varepsilon\gamma\sqrt{2}}a_2 \int^{t}_{0}\frac{(t-\tau)^{\beta-1}}{\Gamma(\beta)}G_\delta (\tau)\int_{|\xi-x|<\gamma\sqrt{2}}\Phi_1 (x-\xi,t-\tau)\,d\xi \,d\tau.
\end{equation}
But for $x,\xi\in\mathbb{R}^n$ and $0<\tau<t<2$ we find making the
change of variables $z=\frac{x-\xi}{\sqrt{4(t-\tau)}}$ that
\begin{align}\label{PT2.10}
 \notag \int_{|\xi-x|<\gamma\sqrt{2}}\Phi_1 (x-\xi,t-\tau)\,d\xi&\geq\int_{|\xi-x|<\gamma\sqrt{t-\tau}}\frac{1}{(4\pi(t-\tau))^{n/2}}e^{-\frac{|x-\xi|^2}{4(t-\tau)}}d\xi\\
 &=\frac{1}{\pi^{n/2}}\int_{|z|<\gamma/2}e^{-|z|^2}dz=:I(\gamma)\to 1
\end{align}
as $\gamma\to\infty$.  Thus by \eqref{PT2.8}, \eqref{PT2.9}, \eqref{PT2.6} and
\eqref{PT2.7} we have for $(x,t)\in\mathbb{R}^n \times(0,1+\delta)$
that
\begin{align}\label{PT2.11}
 \notag \frac{(J_\alpha f_\varepsilon (x,t))^\sigma}{g_\varepsilon (x,t)}&\geq\frac{\varphi(\varepsilon x)^\sigma e^{-\sigma\varepsilon\gamma\sqrt{2}}a^{\sigma}_{1}I(\gamma)^\sigma (J_\alpha F_\delta (t))^\sigma}{\varphi(\varepsilon x)^\sigma a_2 G_\delta (t)}\\
 &\geq\sqrt{\frac{a^{\sigma}_{1}}{a_2}}I(\gamma)^\sigma e^{-\sigma\varepsilon\gamma\sqrt{2}}
\end{align}
and
\begin{align}\label{PT2.12}
 \notag \frac{(J_\beta g_\varepsilon (x,t))^\lambda}{f_\varepsilon (x,t)}&\geq\frac{\varphi(\varepsilon x)^{\lambda\sigma} e^{-\lambda\sigma\varepsilon\gamma\sqrt{2}}a^{\lambda}_{2}I(\gamma)^\lambda (J_\beta G_\delta (t))^\lambda}{\varphi(\varepsilon x) a_1 F_\delta (t)}\\
 &\geq\sqrt{\frac{a^{\lambda}_{2}}{a_1}}I(\gamma)^\lambda e^{-\lambda\sigma\varepsilon\gamma\sqrt{2}}
\end{align}
by \eqref{1.6.3}.

So first choosing $\gamma$ so large that
$$\sqrt{\frac{a^{\sigma}_{1}}{a_2}}I(\gamma)^\sigma >1\quad\text{ and }\quad\sqrt{\frac{a^{\lambda}_{2}}{a_1}}I(\gamma)^\lambda >1$$
(we can do this by \eqref{PT2.3} and \eqref{PT2.10}) and then choosing
$\varepsilon>0$ so small that \eqref{PT2.11} and \eqref{PT2.12} are
both greater than one we see that $f:=f_\varepsilon$ and
$g=g_\varepsilon$ satisfy \eqref{1.2} in
$\mathbb{R}^n \times(0,1+\delta)$.  Hence, since $F_\delta$ and
$G_\delta$, and thus $f(x,t)$ and $g(x,t)$, are identically zero in
$\mathbb{R}^n \times((-\infty,0]\cup[1+\delta, \infty))$ we have that
$f$ and $g$ satisfy \eqref{1.2}, \eqref{1.3}.

From the exponential decay of $\varphi(x)$ as $|x|\to\infty$, we find
that $f$ and $g$ satisfy \eqref{1.16}.  Also since $f$ and $g$ are
uniformly continuous and bounded on $\mathbb{R}^n \times\mathbb{R}$
and
$$\int^{b}_{a}\int_{\mathbb{R}^n}\Phi_\alpha
(x,t)\,dx\,dt=\frac{1}{\Gamma(\alpha+1)}(b^\alpha -a^\alpha )\quad \text{ for }a<b$$
we easily check that \eqref{1.17} holds.

Finally, from \eqref{PT2.4} we see for $0<t<1$ that
$$f(0,t)=a_1 F(t)\geq\left(\frac{N_1}{M_1}\right)^{\frac{\lambda\sigma}{1-\lambda\sigma}}M^{\frac{\lambda\sigma}{1-\lambda\sigma}}_{1}t^{\gamma_1}=N^{\frac{\lambda\sigma}{1-\lambda\sigma}}_{1}t^{\gamma_1}$$
and
$$g(0,t)=a_2 G(t)\geq\left(\frac{N_2}{M_2}\right)^{\frac{\lambda\sigma}{1-\lambda\sigma}}M^{\frac{\lambda\sigma}{1-\lambda\sigma}}_{2}t^{\gamma_2}=N^{\frac{\lambda\sigma}{1-\lambda\sigma}}_{2}t^{\gamma_2}$$
and consequently we obtain \eqref{1.18} and \eqref{1.19}.  Hence
\eqref{1.20} and \eqref{1.21} follow from \eqref{1.2}, \eqref{1.3}.
\end{proof}

\begin{proof}[Proof of Theorem \ref{thm3.4}]
  By Remark \ref{rem4.2} with $T=1$ we can assume $K_1 =K_2 =1$.  Define
  $\bar{f},\bar{g}:\mathbb{R}^n \times\mathbb{R}\to[0,\infty)$ by
\begin{equation}\label{PT3.1}
 \bar{f}(x,t)=F(t)\raisebox{2pt}{$\chi$}_\Omega(x,t)\quad\text{ and }\quad\bar{g}(x,t)=G(t)\raisebox{2pt}{$\chi$}_\Omega(x,t)
\end{equation}
where $F$ and $G$ are defined in Remark \ref{rem4.1} and $\Omega=\{|x|^2 <t\}$.  Then using
Lemma \ref{lem7.4} and the fact that $\alpha+\gamma_1 =\gamma_2 /\sigma$ we
obtain for $|x|^2 <t$ that
\begin{align*}
 J_\alpha \bar{f}(x,t)&=\int^{t}_{0}\frac{(t-\tau)^{\alpha-1}}{\Gamma(\alpha)}\left(\int_{|\xi|^2 <\tau}\Phi_1 (x-\xi,t-\tau)\,d\xi\right)F(\tau)\,d\tau\\
 &\geq C\int^{3t/4}_{t/4}(t-\tau)^{\alpha-1}\tau^{\gamma_1}\,d\tau\\
 &=Ct^{\alpha+\gamma_1}=Ct^{\gamma_2 /\sigma}\\
 &=CG(t)^{1/\sigma}=C\bar{g}(x,t)^{1/\sigma}
\end{align*}
which also holds in $\mathbb{R}^n \times\mathbb{R}\backslash\Omega$ because $\bar{g}=0$ there.  Similarly 
$$J_\beta \bar{g}(x,t)\geq C\bar{f}(x,t)^{1/\lambda}\quad\text{ in }\mathbb{R}^n \times\mathbb{R}.$$
Thus letting
\begin{equation}\label{PT3.2}
 f=L_1\bar{f}\quad\text{ and }\quad g=L_2 \bar{g}
\end{equation}
for appropriately chosen positive constants $L_1$ and $L_2$, the function $f$ and $g$ will satisfy \eqref{1.1}--\eqref{1.3}.

It follows from \eqref{PT3.1}, \eqref{PT3.2} and the definition of $F$
and $G$ in Remark \ref{rem4.1} that there exists $N>0$ such that \eqref{1.22}
holds in $\Omega$.  Thus, since $f$ and $g$ solve \eqref{1.2} we obtain
\eqref{1.23}, provided we decrease $N$ if necessary. 
\end{proof}

\begin{rem}\label{rem5.1}
  Suppose \eqref{1.4}, \eqref{1.5}, \eqref{1.6.1}, and \eqref{T6.1}
  hold.  We will need for the proof of Theorems \ref{thm3.5} and
  \ref{thm3.6} some observations concerning the graphs of the straight
  lines in the $\xi\eta$-plane given by
\begin{equation}\label{4.23}
 \xi=(\eta-\beta)\lambda \quad\text{ and }\quad\eta=(\xi-\alpha)\sigma.
\end{equation}
These lines intersect the vertical line $\xi=\frac{n+2}{2p}$ at 
\[P_2 =\left(\frac{n+2}{2p},\eta_2 \right) \quad\text{and}\quad P_3 =\left(\frac{n+2}{2p},\eta_3 \right),\] 
respectively, where
\begin{equation}\label{4.22}
 \eta_2 =\beta+\frac{n+2}{2p\lambda}\quad\text{ and }\quad\eta_3 =\left(\frac{n+2}{2p}-\alpha\right)\sigma=\frac{(n+2)\sigma}{2q\sigma_0}
\end{equation}
by \eqref{2.12}.  Thus
$$\eta_3 <(=,>)\frac{n+2}{2q}\quad\text{ if }\sigma<(=,>)\sigma_0 .$$
Moreover, it follows from \eqref{4.22} and \eqref{T6.1} that
\begin{equation}\label{4.25}
 \eta_2 =\frac{\mu(\lambda)}{\sigma}\eta_3 <\eta_3
\end{equation}
and 
it follows from \eqref{1.5} and \eqref{T6.1}
(see Figure \ref{fig1}) that $\lambda>\lambda_0$ where $\lambda_0$ is
defined in \eqref{2.12}.  Thus by \eqref{4.22}
\begin{equation}\label{4.24}
 \eta_2 <\beta+\frac{n+2}{2p\lambda_0}=\frac{n+2}{2q}.
\end{equation}
The lines \eqref{4.23} are graphed in Figures \ref{fig2}a,
\ref{fig2}b, and \ref{fig2}c when
$\sigma<\sigma_0 ,\sigma=\sigma_0$, and $\sigma>\sigma_0$
respectively.
\end{rem}

\begin{figure}
\begin{tikzpicture}[scale=0.55]
\draw [fill=lime, lime] (4,2) -- (4,3) -- (2.667,1.667);
\draw [<->] [thick] (0,4.5) -- (0,0) -- (4.5,0);
\draw (0,0) rectangle (4,4);
\draw (1,0) -- (4,3);
\draw (0,1) -- (4,2);
\node [right] at (4.5,0) {$\xi$};
\node [above] at (0,4.5) {$\eta$};
\node [left] at (0,4) {$\frac{n+2}{2q}$};
\node [below] at (4,0) {$\frac{n+2}{2p}$};
\node [left] at (0,1) {$\beta$};
\node [below] at (1,0) {$\alpha$};
\node [right] at (4,2) {$P_2$};
\node [right] at (4,3) {$P_3=P_0$};
\node [below] at (2.77,1.68) {$P_4$};
\node [right] at (-1.0,-2.0) {Figure \ref{fig2}a: $\sigma<\sigma_0$};

\draw [fill=lime, lime] (12.7,2) -- (12.7,4) -- (10.8538,1.5384);
\draw [<->] [thick] (8.7,4.5) -- (8.7,0) -- (13.2,0);
\draw (8.7,0) rectangle (12.7,4);
\draw (9.7,0) -- (12.7,4);
\draw (8.7,1) -- (12.7,2);
\node [right] at (13.2,0) {$\xi$};
\node [above] at (8.7,4.5) {$\eta$};
\node [left] at (8.7,4) {$\frac{n+2}{2q}$};
\node [below] at (12.7,0) {$\frac{n+2}{2p}$};
\node [left] at (8.7,1) {$\beta$};
\node [below] at (9.7,0) {$\alpha$};
\node [right] at (12.7,2) {$P_2$};
\node [right] at (12.7,4) {$P_3=P_0$};
\node [below] at (11,1.60) {$P_4$};
\node [right] at (7.7,-2.0) {Figure \ref{fig2}b: $\sigma=\sigma_0$};

\draw [fill=lime, lime] (21.5,2) -- (21.5,4) -- (20.9,4) -- (19.38235,1.47059);
\draw [<->] [thick] (17.5,4.5) -- (17.5,0) -- (22.0,0);
\draw (17.5,0) rectangle (21.5,4);
\draw (18.5,0) -- (21.5,5);
\draw (17.5,1) -- (21.5,2);
\draw (21.5,4) -- (21.5,5);
\node [right] at (22.0,0) {$\xi$};
\node [above] at (17.5,4.5) {$\eta$};
\node [left] at (17.5,4) {$\frac{n+2}{2q}$};
\node [below] at (21.5,0) {$\frac{n+2}{2p}$};
\node [left] at (17.5,1) {$\beta$};
\node [below] at (18.5,0) {$\alpha$};
\node [right] at (21.5,2) {$P_2$};
\node [right] at (21.5,4) {$P_0$};
\node [right] at (21.5,5) {$P_3$};
\node [below] at (19.6,1.57) {$P_4$};
\node [right] at (16.5,-2.0) {Figure \ref{fig2}c: $\sigma>\sigma_0$};

\end{tikzpicture}
\caption{Graphs of lines \eqref{4.23}.}
\label{fig2}
\end{figure}

\begin{proof}[Proof of Theorem \ref{thm3.5}]
  Since $|R_j |<\infty$ for $j=1,2,\dots$, to prove Theorem \ref{thm3.5} it
  suffices to show for each $\varepsilon>0$ there exist
\begin{equation}\label{T6.2}
 r\in(p,p+\varepsilon)\quad\text{ and }\quad s\in(s_0 ,s_0 +\varepsilon)
\end{equation}
such that the conclusion of Theorem \ref{thm3.5} holds.

We will use the notation and observations in Remark \ref{rem5.1}.  Let $P_0 =(\xi_0 ,\eta_0 )$ where 
$$\xi_0 =\frac{n+2}{2p}\quad\text{ and }\quad\eta_0 =
\begin{cases}
 \eta_3 & \text{if }\sigma<\sigma_0 \\
 \frac{n+2}{2q} & \text{if }\sigma\geq\sigma_0 .
\end{cases}
$$
Then
\[
\eta_0=\frac{n+2}{2s_0}\quad \text{and}\quad
P_0 =
\begin{cases}
 P_3 & \text{if }\sigma<\sigma_0 \\
 (\frac{n+2}{2p},\frac{n+2}{2q}) & \text{if }\sigma\geq\sigma_0 .
\end{cases}
\]
The point $P_0$ is graphed in Figure  \ref{fig2}. It follows from
Figure \ref{fig2} that there exist points $P_1(\xi_1,\eta_1)$ in the
open shaded region arbitrarily close to $P_0$. More precisely, fixing
$\varepsilon >0$,  there exist $\xi_1\in(0,\xi_0)$ and
$\eta_1\in(0,\eta_0)$ such that 
\begin{equation}\label{8.14}
 \xi_1 <(\eta_1 -\beta)\lambda \quad\text{and}\quad
 \eta_1 <(\xi_1 -\alpha)\sigma
\end{equation}
and
\begin{equation}\label{8.13}
\frac{n+2}{2(p+\varepsilon)}<\xi_1<\xi_0=\frac{n+2}{2p}\quad\text{and}\quad 
\frac{n+2}{2(s_0+\varepsilon)}<\eta_1<\eta_0=\frac{n+2}{2s_0}\le\frac{n+2}{2q}. 
\end{equation}
Thus defining $r$ and $s$ by
\begin{equation}\label{8.13.5}
\frac{n+2}{2r}=\xi_1\quad\text{and}\quad\frac{n+2}{2s}=\eta_1
\end{equation}
we have $r$ and $s$ satisfy \eqref{T6.2}.

Define $f_0 ,g_0 :\mathbb{R}^n \times\mathbb{R}\to\mathbb{R}$ by
\begin{equation}\label{8.15}
 f_0(x,t)=\left(\frac{1}{t}\right)^{\xi_1}\raisebox{2pt}{$\chi$}_{\Omega_0}(x,t)\quad\text{
   and }\quad g_0(x,t)=\left(\frac{1}{t}\right)^{\eta_1}\raisebox{2pt}{$\chi$}_{\Omega_0}(x,t)
\end{equation}
where
$$\Omega_0 =\{(x,t)\in\mathbb{R}^n \times\mathbb{R}:|x|^2 <t<1\}.$$
Then by \eqref{8.13} and Lemma \ref{lem7.6} we have
\begin{equation}\label{8.16}
 f_0 \in L^p (\mathbb{R}^n \times\mathbb{R})\quad\text{ and }\quad g_0 \in L^q (\mathbb{R}^n \times\mathbb{R})
\end{equation}
and for $(x,t)\in\Omega_0$
\begin{equation}\label{8.17}
 J_\alpha f_0 (x,t)\geq C\left(\frac{1}{t}\right)^{\xi_1 -\alpha}\quad\text{ and
 }\quad J_\beta g_0 (x,t)\geq C\left(\frac{1}{t}\right)^{\eta_1 -\beta}
\end{equation}
where $C=(n,\lambda,\sigma,\alpha,\beta,p,q,r,s)>0$.

Let $\{T_j \}\subset(0,1/2)$ be a sequence such that
\[T_{j+1}<T_j /4,\quad j=1,2,\dots\]
and define
\begin{equation}\label{8.18}
 t_j =T_j /2.
\end{equation}
Then
\begin{equation}\label{8.19}
 \Omega_j :=\{(y,s)\in\mathbb{R}^n \times\mathbb{R}:|y|<\sqrt{T_j -s}\text{ and }t_j <s<T_j \}\subset R_j \subset\Omega_0 .
\end{equation}
Defining $f_j ,g_j :\mathbb{R}^n \times\mathbb{R}\to\mathbb{R}$ by
\begin{equation}\label{8.19.5}
 f_j (x,t)=\left(\frac{1}{T_j
     -t}\right)^{\xi_1}\raisebox{2pt}{$\chi$}_{\Omega_j}(x,t)\quad\text{
   and }\quad g_j (x,t)=\left(\frac{1}{T_j -t}\right)^{\eta_1}\raisebox{2pt}{$\chi$}_{\Omega_j}(x,t)
\end{equation}
we obtain from \eqref{8.13}, \eqref{8.13.5}, \eqref{8.19}, and Lemma
\ref{lem7.7} that
\begin{equation}\label{8.20}
\begin{aligned}
 \| f_j \|^{p}_{L^p (\mathbb{R}^n \times\mathbb{R})}
&=C(n)\int^{T_j -t_j}_{0}\zeta^{(\frac{n+2}{2p}-\xi_1
  )p-1}d\zeta\to0\quad\text{ as }j\to\infty,\\
\| g_j\|^{q}_{L^q (\mathbb{R}^n\times\mathbb{R})}
&=C(n)\int^{T_j-t_j}_0\zeta^{(\frac{n+2}{2q}-\eta_1
                         )q-1}d\zeta\to0\quad \text{ as }j\to\infty,
\end{aligned}
\end{equation}
\begin{equation}\label{8.21}
\|f_j\|_{L^r (R_j )}=\|g_j\|_{L^s (R_j )}=\infty\quad\text{ for }j=1,2,\dots
\end{equation}
and for $(x,t)\in\Omega^{+}_{j}:=\{(x,t)\in\Omega_j :\frac{3T_j}{4}<t<T_j \}$ that
\begin{equation}\label{8.22}
 J_\alpha f_j (x,t)\geq C\left(\frac{1}{T_j -t}\right)^{\xi_1
   -\alpha}\quad\text{ and }\quad J_\beta g_j (x,t)\geq C\left(\frac{1}{T_j -t}\right)^{\eta_1 -\beta}.
\end{equation}

It follows from \eqref{8.15} and \eqref{8.17} that for $(x,t)\in\Omega_0$ we have
$$\frac{f_0 (x,t)}{(J_\beta g_0 (x,t))^\lambda}\leq Ct^{(\eta_1 -\beta)\lambda-\xi_1 >0}$$
and
$$\frac{g_0 (x,t)}{(J_\alpha f_0 (x,t))^\sigma}\leq Ct^{(\xi_1 -\alpha)\sigma-\eta_1 >0}.$$
Thus by \eqref{8.14} and \eqref{8.19} we find that
\begin{equation}\label{8.23}
 \sup_{\Omega_0}\frac{f_0}{(J_\beta g_0 )^\lambda}\leq C,\quad\sup_{\Omega_0}\frac{g_0}{(J_\alpha f_0 )^\sigma}\leq C
\end{equation}
and
\begin{equation}\label{8.24}
 \sup_{\Omega_j}\frac{f_0}{(J_\beta g_0 )^\lambda}\leq 1,\quad\sup_{\Omega_j}\frac{g_0}{(J_\alpha f_0 )^\sigma}\leq1
\end{equation}
by taking a subsequence.

Using \eqref{8.19.5}, \eqref{8.22}, \eqref{8.14} and taking a subsequence we obtain
\begin{align}\label{8.25.1}
 \notag \sup_{\Omega^{+}_{j}}\frac{f_j}{(J_\beta g_j )^\lambda}&\leq C\sup_{(x,t)\in\Omega^{+}_{j}}(T_j -t)^{(\eta_1 -\beta)\lambda-\xi_1}\\
 &\leq C(T_j -t_j )^{(\eta_1 -\beta)\lambda-\xi_1}<1
\end{align}
and similarly
\begin{equation}\label{8.25.2}
 \sup_{\Omega^{+}_{j}}\frac{g_j}{(J_\alpha f_j )^{\sigma}}\leq C(T_j -t_j )^{(\xi_1 -\alpha)\sigma-\eta_1}<1.
\end{equation}

It follows from \eqref{8.15}, \eqref{8.19.5},\eqref{8.19}, and \eqref{8.18} that
\begin{equation}\label{8.26.1}
 \sup_{\Omega_j}\frac{f_0}{f_j}=\sup_{(x,t)\in\Omega_j}\frac{(T_j -t)^{\xi_1}}{t^{\xi_1}}\leq1
\end{equation}
and
\begin{equation}\label{8.26.2}
 \sup_{\Omega_j}\frac{g_0}{g_j}=\sup_{(x,t)\in\Omega_j}\frac{(T_j -t)^{\eta_1}}{t^{\eta_1}}\leq1
\end{equation}
and letting $\Omega^{-}_{j}=\Omega_j \backslash\Omega^{+}_{j}$, using \eqref{8.17}, \eqref{8.19.5}, \eqref{8.19}, \eqref{8.18} and \eqref{8.14} and taking a subsequence we obtain
\begin{align}\label{8.27.1}
 \notag \sup_{\Omega^{-}_{j}}\frac{f_j}{(J_\beta g_0 )^\lambda}&\leq C\sup_{(x,t)\in\Omega^{-}_{j}}\frac{t^{(\eta_1-\beta)\lambda}}{(T_j -t)^{\xi_1}}\leq C\frac{(2t_j )^{(\eta_1 -\beta)\lambda}}{(t_j /2)^{\xi_1}}\\
 &=Ct^{(\eta_1 -\beta)\lambda-\xi_1}_{j}<\frac{1}{2}
\end{align}
and similarly
\begin{equation}\label{8.27.2}
 \sup_{\Omega^{-}_{j}}\frac{g_j}{(J_\alpha f_0 )^\lambda}\leq Ct^{(\xi_1 -\alpha)\sigma-\eta_1}_{j}<\frac{1}{2}.
\end{equation}

Taking an appropriate subsequence of $(f_j ,g_j )$ and letting
\[f=f_0 +\sum^{\infty}_{j=1}f_j \quad\text{ and }\quad g=g_0 +\sum^{\infty}_{j=1}g_j\]
we see from \eqref{8.16} and \eqref{8.20} that \eqref{T6.3} holds.  In $\Omega^{+}_{j}$ we have by \eqref{8.24} and \eqref{8.25.1} that
\begin{align*}
 f&=f_0 +f_j \leq(J_\beta g_0 )^\lambda +(J_\beta g_j )^\lambda \\
 &\leq C(\lambda)(J_\beta (g_0 +g_j ))^\lambda \leq C(\lambda)(J_\beta g)^\lambda
\end{align*}
and similarly by \eqref{8.24} and \eqref{8.25.2} that $g\leq C(\sigma)(J_\alpha f)^\sigma$.

In $\Omega^{-}_{j}$ we have by \eqref{8.26.1} and \eqref{8.27.1} that
$$f=f_0 +f_j \leq2f_j \leq(J_\beta g_0 )^\lambda \leq(J_\beta g)^\lambda$$
and similarly by \eqref{8.26.2} and \eqref{8.27.2} that $g\leq(J_\alpha f)^\sigma$.  In $\Omega_0 \backslash\bigcup^{\infty}_{j=1}\Omega_j$ we have by \eqref{8.23} that
$$f=f_0 \leq C(J_\beta g_0 )^\lambda \leq C(J_\beta g)^\lambda$$
and
$$g=g_0 \leq C(J_\alpha f_0 )^\sigma \leq (CJ_\alpha f)^\sigma .$$
In $\mathbb{R}^n \times\mathbb{R}\backslash\Omega_0$, $f=0\leq(J_\beta g)^\lambda$ and $g=0\leq(J_\alpha f)^\sigma$.  Thus, after scaling $f$ and $g$ we see that $f$ and $g$ are solutions of \eqref{1.2} and \eqref{1.3}.  Also \eqref{T6.4} holds by \eqref{8.21}. 
\end{proof}

\begin{proof}[Proof of Theorem \ref{thm3.6}]
  We will use the notation and observations in Remark \ref{rem5.1}.
Let $P_4 =(\xi_4 ,\eta_4 )$ be the point where the lines \eqref{4.23}
intersect.  It follows from \eqref{4.25} and \eqref{4.24} (see Figure \ref{fig2}) that
\begin{equation}\label{8.28}
 0<\xi_4 <\frac{n+2}{2p}\quad\text{ and }\quad 0<\eta_4 <\frac{n+2}{2q}.
\end{equation}
Thus, since solving the system \eqref{4.23} yields
\begin{equation}\label{8.28.5}
 \xi_4 =\frac{n+2}{2r_0}\quad\text{ and }\quad\eta_4 =\frac{n+2}{2s_0},
\end{equation}
we see that $r_0 >p$ and $s_0 >q$.

Define $f_0 ,g_0 :\mathbb{R}^n \times\mathbb{R}\to\mathbb{R}$ by
\[f_0(x,t)=\left(\frac{1}{t}\right)^{\xi_4}\raisebox{2pt}{$\chi$}_{\Omega_0}(x,t)\quad\text{
  and }\quad g_0 (x,t)=\left(\frac{1}{t}\right)^{\eta_4}\raisebox{2pt}{$\chi$}_{\Omega_0}(x,t)\]
where $\Omega_0$ is as in Lemma \ref{lem7.6}.  Then by \eqref{8.28},
Lemma \ref{lem7.6} and the fact that $P_4$ satisfies \eqref{4.23} we
have
\begin{equation}\label{8.29}
 f_0 \in X^p, \quad g_0 \in X^q
\end{equation}
and
\begin{equation}\label{8.30}
 f_0 \leq C(J_\beta g_0 )^\lambda \quad\text{ and }\quad g_0 \leq C(J_\alpha f_0 )^\sigma \quad\text{ in }\mathbb{R}^n \times\mathbb{R}
\end{equation}
where in this proof $C$ is a positive constant depending on $n,
\lambda, \sigma, \alpha, \beta, p, q$, whose values may change from line to line.

Let $\{T_j \},\{t_j \}\subset(2,\infty)$ satisfy $T_{j+1}>4T_j$ and $T_j =2t_j$ and define $f_j ,g_j :\mathbb{R}^n \times\mathbb{R}\to\mathbb{R}$ by
\begin{equation}\label{8.31}
 f_j (x,t)=\left(\frac{1}{T_j
     -t}\right)^{\xi_4}\raisebox{2pt}{$\chi$}_{\Omega_j}(x,t)\quad\text{
   and }\quad g_j (x,t)=\left(\frac{1}{T_j -t}\right)^{\eta_4}\raisebox{2pt}{$\chi$}_{\Omega_j}(x,t)
\end{equation}
where
$$\Omega_j =\{(x,t)\in\mathbb{R}^n \times(t_j ,T_j ):|x|<\sqrt{T_j -t}\}.$$
Then
\begin{equation}\label{8.32}
 \Omega_j \subset R_j \subset\Omega_0 ,\quad\Omega_j \cap\Omega_k =\emptyset\quad\text{ for }j\neq k
\end{equation}
\begin{equation}\label{8.33}
 \inf\{t:(x,t)\in\Omega_j \}=t_j \to\infty\quad\text{ as }j\to\infty,
\end{equation}
and by \eqref{8.31}, \eqref{8.28}, Lemma \ref{lem7.7}, and the fact that $P_4$ satisfies \eqref{4.23} we have
\begin{equation}\label{8.34}
 f_j \in L^p (\mathbb{R}^n \times\mathbb{R}),\quad g_j \in L^q (\mathbb{R}^n \times\mathbb{R})
\end{equation}
and
\[f_j \leq C(J_\beta g_j )^\lambda \quad\text{ and }\quad g_j \leq C(J_\alpha f_j )^\sigma \quad\text{ in }\Omega^{+}_{j}\]
where
$$\Omega^{+}_{j}=\{(x,t)\in\Omega_j :\frac{3T_j}{4}<t<T_j \}.$$
It follows therefore from \eqref{8.30} that
\begin{equation}\label{8.35}
 f_0 +f_j \leq C((J_\beta g_0 )^\lambda +(J_\beta g_j )^\lambda )\leq C(J_\beta (g_0 +g_j ))^\lambda \quad\text{ in }\Omega^{+}_{j}
\end{equation}
and similarly
\begin{equation}\label{8.35.1}
 g_0 +g_j \leq C(J_\alpha (f_0 +f_j ))^\sigma \quad\text{ in }\Omega^{+}_{j}.
\end{equation}
In $\Omega^{-}_{j}=\Omega_j \backslash\Omega^{+}_{j}$ we have
$$\frac{f_j}{f_0}=\left(\frac{t}{T_j -t}\right)^{\xi_4}\leq\left(\frac{3T_j /4}{T_j /4}\right)^{\xi_4}=3^{\xi_4}$$
and similarly
$$\frac{g_j}{g_0}\leq3^{\eta_4}.$$
Thus we obtain from \eqref{8.30} that
\begin{equation}\label{8.36}
  f_0 +f_j \leq Cf_0 \leq C(J_\beta g_0 )^\lambda 
\leq C(J_\beta (g_0 +g_j ))^\lambda \quad\text{ in }\Omega^{-}_{j}
\end{equation}
and similarly that
\begin{equation}\label{8.36.1}
 g_0 +g_j \leq C(J_\alpha (f_0 +f_j ))^\sigma \quad\text{ in }\Omega^{-}_{j}.
\end{equation}
Let
\[f=f_0 +\sum^{\infty}_{j=1}f_j \quad\text{ and }\quad g=g_0 +\sum^{\infty}_{j=1}g_j .\]
Then clearly $f$ and $g$ satisfy \eqref{1.3} and by \eqref{8.29}, \eqref{8.34}, and \eqref{8.33} we see that $f$ and $g$ satisfy \eqref{T7.2}.

In $\Omega_j$ we have by \eqref{8.32}$_2$, \eqref{8.35}, \eqref{8.35.1}, \eqref{8.36}, and \eqref{8.36.1} that
\begin{align*}
 &f=f_0 +f_j \leq C(J_\beta (g_0 +g_j ))^\lambda \leq C(J_\beta g)^\lambda \\
 &g=g_0 +g_j \leq C(J_\alpha (f_0+f_j ))^\sigma \leq C(J_\alpha f)^\sigma
\end{align*}
and in $(\mathbb{R}^n \times\mathbb{R})\backslash\cup^{\infty}_{j=1}\Omega_j$ we have by \eqref{8.30} that
$$f=f_0 \leq C(J_\beta g_0 )^\lambda \leq C(J_\beta g)^\lambda$$
and
$$g=g_0 \leq C(J_\alpha f_0 )^\sigma \leq C(J_\alpha f)^\sigma .$$
Thus after scaling $f$ and $g$, we find that $f$ and $g$ satisfy \eqref{1.2}.

From \eqref{8.28.5}, \eqref{8.31}, \eqref{8.32}$_1$ and Lemma
\ref{lem7.7} we find
\[\| f\|_{L^{r_0}(R_j )}\geq\| f_j \|_{L^{r_0}(R_j )}=\infty \quad\text{ for }j=1,2,\dots\]
and
\[\| g\|_{L^{s_0}(R_j )}\geq\| g_j \|_{L^{s_0}(R_j )}=\infty\quad\text{ for }j=1,2,\dots.\]
Thus, since $|R_j |<\infty$, we have \eqref{T6.4} holds for all $r$
and $s$ satisfying \eqref{T7.1}. 
\end{proof}

\appendix

\section{Auxiliary lemmas}\label{secA}
In this appendix we provide some lemmas needed for the proofs of our
results in Section \ref{sec3} dealing with solutions of the $J_\alpha$
problem \eqref{1.1}--\eqref{1.4}. See \cite[Section 7]{T} for the proofs of these lemmas.

Let $\Omega=\mathbb{R}^n \times(a,b)$ where $n\geq1$ and $a<b$.  The following two lemmas give estimates for the convolution
\begin{equation}\label{7.1}
 (V_{\alpha,\Omega}f)(x,t)=\iint_{\Omega}\Phi_\alpha (x-\xi,t-\tau)f(\xi,\tau)\, d\xi \,d\tau
\end{equation}
where $\alpha>0$ and $\Phi_\alpha$ is defined in \eqref{I.9}.

\begin{rem}\label{rem7.1}
  Note that if $f:\mathbb{R}^n \times\mathbb{R}\to\mathbb{R}$ is a
  nonnegative measurable function such that
  $\| f\|_{L^\infty (\mathbb{R}^n \times\mathbb{R}_a)}=0$ then
 $$V_{\alpha,\Omega}f=J_\alpha f\quad\text{in } \Omega:=\mathbb{R}^n \times(a,b).$$
\end{rem}

\begin{lem}\label{lem7.1}
 For $\alpha>0,\,\Omega=\mathbb{R}^n \times(a,b)$ and $f\in L^\infty (\Omega)$ we have
 $$\| V_{\alpha,\Omega}f\|_{L^\infty (\Omega)}\leq\frac{(b-a)^\alpha}{\Gamma(\alpha+1)}\| f\|_{L^\infty (\Omega)}.$$
\end{lem}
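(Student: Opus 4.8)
The plan is to bound the integrand pointwise by $\|f\|_{L^\infty(\Omega)}$ times the nonnegative kernel $\Phi_\alpha$, and then compute the integral of $\Phi_\alpha$ over $\Omega$ explicitly. First I would fix $(x,t)\in\Omega$ and write
\[
|(V_{\alpha,\Omega}f)(x,t)|\le\|f\|_{L^\infty(\Omega)}\iint_{\Omega}\Phi_\alpha(x-\xi,t-\tau)\,d\xi\,d\tau,
\]
which is legitimate because $\Phi_\alpha\ge0$, so Tonelli's theorem applies to the iterated integral with no integrability hypothesis needed.

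Next I would evaluate the double integral. Because of the factor $\chi_{(0,\infty)}$ in the definition \eqref{I.9} of $\Phi_\alpha$, the integrand vanishes unless $t-\tau>0$, i.e. $\tau<t$; together with $\tau>a$ (from $\Omega$) and $t<b$, the effective range of $\tau$ is $(a,t)$. Integrating first in $\xi$, the Gaussian normalization
\[
\int_{\mathbb{R}^n}\frac{1}{(4\pi(t-\tau))^{n/2}}e^{-|x-\xi|^2/(4(t-\tau))}\,d\xi=1\quad\text{for }\tau<t
\]
reduces the integral to
\[
\int_a^t\frac{(t-\tau)^{\alpha-1}}{\Gamma(\alpha)}\,d\tau=\frac{(t-a)^\alpha}{\alpha\,\Gamma(\alpha)}=\frac{(t-a)^\alpha}{\Gamma(\alpha+1)},
\]
using the identity $\alpha\Gamma(\alpha)=\Gamma(\alpha+1)$.

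Finally, since $0<t-a<b-a$ and $\alpha>0$, I would bound $(t-a)^\alpha\le(b-a)^\alpha$, so that $\iint_{\Omega}\Phi_\alpha(x-\xi,t-\tau)\,d\xi\,d\tau\le(b-a)^\alpha/\Gamma(\alpha+1)$, and taking the supremum over $(x,t)\in\Omega$ gives the asserted bound. There is essentially no real obstacle here; the only points deserving a word of care are the interchange of the order of integration (justified by nonnegativity) and the precise determination of the $\tau$-range forced by the support condition on $\Phi_\alpha$ and by $(x,t)\in\Omega$.
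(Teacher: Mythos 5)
Your proof is correct and is the standard (surely the same) argument: since the paper defers the proof of this lemma to \cite[Section 7]{T}, the expected computation is exactly what you do — bound $|f|$ by its sup norm, use that the Gaussian factor of $\Phi_\alpha$ integrates to $1$ in $\xi$, and integrate $(t-\tau)^{\alpha-1}/\Gamma(\alpha)$ over the effective range $\tau\in(a,t)$ to get $(t-a)^\alpha/\Gamma(\alpha+1)\le(b-a)^\alpha/\Gamma(\alpha+1)$. No gaps; the support restriction $\tau<t$ and the nonnegativity of the kernel are the only points needing care, and you handle both.
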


\begin{lem}\label{lem7.2}
 Let $p,q\in[1,\infty]$, $\alpha$, and $\delta$ satisfy
 \begin{equation}\label{7.2}
  0\leq\delta:=\frac{1}{p}-\frac{1}{q}<\frac{2\alpha}{n+2}<1.
 \end{equation}
 Then $V_{\alpha,\Omega}$ maps $L^p (\Omega)$ continuously into $L^q (\Omega)$ and for $f\in L^p (\Omega)$ we have
 $$\| V_{\alpha,\Omega}f\|_{L^q (\Omega)}\leq M\| f\|_{L^p (\Omega)}$$
 where 
 $$M=C(b-a)^{\frac{2\alpha-(n+2)\delta}{2}}\text{ for some constant }C=C(n,\alpha,\delta).$$
\end{lem}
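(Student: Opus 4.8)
The plan is to identify $V_{\alpha,\Omega}$ with convolution against a time-truncated fractional heat kernel and then invoke Young's convolution inequality. First I would note that in \eqref{7.1}, for $(x,t)\in\Omega=\mathbb{R}^n\times(a,b)$, the integrand is nonzero only where $0<t-\tau<b-a$; hence, extending $f$ by $0$ to all of $\mathbb{R}^n\times\mathbb{R}$ and setting $\widetilde\Phi_\alpha:=\Phi_\alpha\,\chi_{\mathbb{R}^n\times(0,\,b-a)}$, one has $V_{\alpha,\Omega}f=(\widetilde\Phi_\alpha*f)\big|_\Omega$, where the convolution is over $\mathbb{R}^n\times\mathbb{R}$. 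Consequently $\|V_{\alpha,\Omega}f\|_{L^q(\Omega)}\le\|\widetilde\Phi_\alpha*f\|_{L^q(\mathbb{R}^n\times\mathbb{R})}$ and $\|f\|_{L^p(\mathbb{R}^n\times\mathbb{R})}=\|f\|_{L^p(\Omega)}$.

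Next I would apply Young's inequality with the exponent $r$ determined by $1+\frac1q=\frac1p+\frac1r$, i.e. $\frac1r=1-\delta$. Since $0\le\delta<\frac{2\alpha}{n+2}<1$, we have $r=\frac1{1-\delta}\in[1,\infty)$, so Young's inequality applies and yields
\[\|V_{\alpha,\Omega}f\|_{L^q(\Omega)}\le\|\widetilde\Phi_\alpha\|_{L^r(\mathbb{R}^n\times\mathbb{R})}\,\|f\|_{L^p(\Omega)}.\]
It then remains to compute $\|\widetilde\Phi_\alpha\|_{L^r}$. Performing the Gaussian integral in $x$ gives $\int_{\mathbb{R}^n}\Phi_\alpha(x,t)^r\,dx=c(n,r,\alpha)\,t^{\,r(\alpha-\frac{n+2}{2})+\frac n2}$, and then
\[\|\widetilde\Phi_\alpha\|_{L^r}^r=c(n,r,\alpha)\int_0^{b-a}t^{\,r(\alpha-\frac{n+2}{2})+\frac n2}\,dt,\]
which is finite exactly when $r(\alpha-\frac{n+2}{2})+\frac n2+1>0$; since this quantity equals $r\cdot\frac{2\alpha-(n+2)\delta}{2}$, finiteness is equivalent to the hypothesis $\delta<\frac{2\alpha}{n+2}$. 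Evaluating the integral and taking $r$-th roots gives $\|\widetilde\Phi_\alpha\|_{L^r}=C(n,\alpha,\delta)\,(b-a)^{\frac{2\alpha-(n+2)\delta}{2}}$, which is the asserted bound $M$; continuity of $V_{\alpha,\Omega}:L^p(\Omega)\to L^q(\Omega)$ then follows from linearity and this norm estimate.

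I expect the only delicate point to be the arithmetic in the last step: verifying that the convergence condition for $\int_0^{b-a}t^{\,\cdot}\,dt$ is precisely $\delta<\frac{2\alpha}{n+2}$, and that the power of $b-a$ collapses to $\frac{2\alpha-(n+2)\delta}{2}$. As a consistency check, when $\delta=0$ one has $r=1$ and $\int_{\mathbb{R}^n}\Phi_\alpha(x,t)\,dx=\frac{t^{\alpha-1}}{\Gamma(\alpha)}$, so $\|\widetilde\Phi_\alpha\|_{L^1}=\frac{(b-a)^\alpha}{\Gamma(\alpha+1)}$ and the estimate reduces to (in fact sharpens, with an explicit constant) the $p=q$ case, recovering Lemma \ref{lem7.1} when $p=q=\infty$.
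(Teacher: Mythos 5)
Your argument is correct: writing $V_{\alpha,\Omega}f$ as convolution with the time-truncated kernel $\Phi_\alpha\chi_{\mathbb{R}^n\times(0,b-a)}$, applying Young's inequality with $1+\frac1q=\frac1p+\frac1r$, and computing $\|\Phi_\alpha\chi_{\mathbb{R}^n\times(0,b-a)}\|_{L^r}=C(n,\alpha,\delta)(b-a)^{\frac{2\alpha-(n+2)\delta}{2}}$ is exactly the standard route, and your exponent arithmetic (using $r-1=r\delta$) and the endpoint check against Lemma \ref{lem7.1} are right. Note that this paper does not reprove the lemma but cites \cite[Section 7]{T}, where the proof proceeds in essentially this same way, so there is nothing further to reconcile.
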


\begin{lem}\label{lem7.4}
 Suppose $x\in\mathbb{R}^n$ and $t,\tau\in(0,\infty)$ satisfy
 \begin{equation}\label{7.8}
  |x|^2 <t \quad\text{and}\quad  \frac{t}{4}<\tau<\frac{3t}{4}.
 \end{equation}
 Then
 $$\int_{|\xi|^2 <\tau}\Phi_1 (x-\xi,t-\tau)\,d\xi\geq C(n)>0$$
 where $\Phi_\alpha$ is defined by \eqref{I.9}.
\end{lem}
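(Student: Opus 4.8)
The plan is to turn the integral into a Gaussian integral over a subset $E$ of $z$-space and then show that $E$ contains a ball of fixed radius whose center stays within a bounded neighborhood of the origin. By \eqref{I.9}, $\Phi_1(x-\xi,t-\tau)=(4\pi(t-\tau))^{-n/2}e^{-|x-\xi|^2/(4(t-\tau))}$ for $t-\tau>0$, and $t-\tau>0$ here since \eqref{7.8} gives $\tau<\tfrac{3t}{4}<t$. Substituting $z=\dfrac{x-\xi}{\sqrt{4(t-\tau)}}$, $d\xi=(4(t-\tau))^{n/2}\,dz$, yields
\[
\int_{|\xi|^2<\tau}\Phi_1(x-\xi,t-\tau)\,d\xi=\frac{1}{\pi^{n/2}}\int_{E}e^{-|z|^2}\,dz,\qquad E:=\bigl\{z\in\mathbb{R}^n:\bigl|x-\sqrt{4(t-\tau)}\,z\bigr|^2<\tau\bigr\}.
\]
So it suffices to bound $\displaystyle\int_{E}e^{-|z|^2}\,dz$ below by a constant depending only on $n$.

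Set $z_0=\dfrac{x}{\sqrt{4(t-\tau)}}$. For $|z-z_0|\le\tfrac14$ we have $\bigl|x-\sqrt{4(t-\tau)}\,z\bigr|=2\sqrt{t-\tau}\,|z-z_0|\le\tfrac12\sqrt{t-\tau}$, hence $\bigl|x-\sqrt{4(t-\tau)}\,z\bigr|^2\le\tfrac{t-\tau}{4}$. Since \eqref{7.8} gives $\tau>t/4$, hence $t<4\tau$, we have $t-\tau<t<4\tau$, so $\tfrac{t-\tau}{4}<\tau$; thus the closed ball $\overline{B(z_0,1/4)}$ is contained in $E$. Moreover, using $|x|^2<t$ together with $t-\tau>t-\tfrac{3t}{4}=\tfrac t4$ (from \eqref{7.8}), we get $|z_0|^2=\dfrac{|x|^2}{4(t-\tau)}<\dfrac{t}{4\cdot(t/4)}=1$, so $\overline{B(z_0,1/4)}\subset B(0,\tfrac54)$ and hence $e^{-|z|^2}\ge e^{-25/16}$ on $\overline{B(z_0,1/4)}$.

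Combining these facts,
\[
\int_{|\xi|^2<\tau}\Phi_1(x-\xi,t-\tau)\,d\xi\ge\frac{1}{\pi^{n/2}}\int_{B(z_0,1/4)}e^{-|z|^2}\,dz\ge\frac{1}{\pi^{n/2}}\,e^{-25/16}\,\omega_n\Bigl(\frac14\Bigr)^{n}=:C(n)>0,
\]
where $\omega_n$ denotes the volume of the unit ball in $\mathbb{R}^n$, which completes the proof.

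This argument has no serious obstacle; the only delicate point is the bookkeeping with \eqref{7.8}: one uses $\tau>t/4$ to guarantee that $B(z_0,1/4)$ actually lies in $E$, and $\tau<3t/4$ (equivalently $t-\tau>t/4$) together with $|x|^2<t$ to guarantee that the center $z_0$ is bounded, so the Gaussian weight is bounded below on the ball. Any radius strictly below $\tfrac{1}{2}\sqrt{\tau/(t-\tau)}$ would work; $1/4$ is a convenient explicit choice since $\tau/(t-\tau)>1/3$ on the given range.
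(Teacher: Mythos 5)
Your proof is correct, and it is the natural change-of-variables argument (reduce to a Gaussian integral and show the admissible region contains a ball of fixed radius centered at a uniformly bounded point), which is the same technique the paper itself uses in the computation \eqref{PT2.10}; the paper defers the actual proof of this lemma to \cite[Section 7]{T}, and your bookkeeping with \eqref{7.8} (using $\tau>t/4$ for the inclusion $\overline{B(z_0,1/4)}\subset E$ and $\tau<3t/4$ with $|x|^2<t$ for $|z_0|<1$) is accurate throughout.
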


\begin{lem}\label{lem7.5}
 For $\tau<t\leq T$ and $|x|\leq\sqrt{T-t}$ we have
 $$\int_{|\xi|<\sqrt{T-\tau}}\Phi_1(x-\xi,t-\tau)\,d\xi\geq C$$
 where $C=C(n)$ is a positive constant. 
\end{lem}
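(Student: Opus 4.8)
The plan is to reduce the integral to a fixed Gaussian integral over a region of $\mathbb{R}^n$ which, no matter what $x$ is, contains a half-ball of radius $\tfrac12$. First I would set $s:=t-\tau>0$ and $a:=T-t\ge 0$, so that $\sqrt{T-\tau}=\sqrt{s+a}$ and the hypothesis on $x$ reads $|x|^2\le a$. Since $\Phi_1(y,s)=(4\pi s)^{-n/2}e^{-|y|^2/(4s)}$, the change of variables $\xi=x+2\sqrt s\,z$ (with $d\xi=2^ns^{n/2}\,dz$ and $|x-\xi|^2/(4s)=|z|^2$) transforms the integral into
\[
 \int_{|\xi|<\sqrt{T-\tau}}\Phi_1(x-\xi,t-\tau)\,d\xi=\frac{1}{\pi^{n/2}}\int_{E}e^{-|z|^2}\,dz,\qquad E:=\bigl\{z\in\mathbb{R}^n:|x+2\sqrt s\,z|<\sqrt{s+a}\bigr\}.
\]

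The key step is the geometric observation that $E$ always contains the half-ball $H:=\{z:|z|<\tfrac12\ \text{and}\ \langle x,z\rangle\le0\}$ (when $x=0$ this is simply the ball $\{|z|<\tfrac12\}$). Indeed, for $z\in H$ I would estimate
\[
 |x+2\sqrt s\,z|^2=|x|^2+4\sqrt s\,\langle x,z\rangle+4s|z|^2\le a+0+4s|z|^2<a+s,
\]
the middle inequality using $|x|^2\le a$ and $\langle x,z\rangle\le0$, and the last using $|z|^2<\tfrac14$; hence $z\in E$. Since $z\mapsto-z$ leaves $e^{-|z|^2}$ invariant and carries $\{|z|<\tfrac12,\ \langle x,z\rangle\le0\}$ onto $\{|z|<\tfrac12,\ \langle x,z\rangle\ge0\}$, the Gaussian mass of $H$ is at least half that of the full ball $\{|z|<\tfrac12\}$, so
\[
 \int_{|\xi|<\sqrt{T-\tau}}\Phi_1(x-\xi,t-\tau)\,d\xi\ge\frac{1}{\pi^{n/2}}\int_{H}e^{-|z|^2}\,dz\ge\frac{1}{2\pi^{n/2}}\int_{|z|<1/2}e^{-|z|^2}\,dz=:C(n)>0,
\]
which gives the claim with a constant depending only on $n$.

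The only delicate point — and the reason a more naive argument fails — is the regime in which $T-t$ is large relative to $t-\tau$: then $x$ may sit close to the boundary sphere $\{|\xi|=\sqrt{T-\tau}\}$, and the crude bound "the ball $\{|\xi|<\sqrt{T-\tau}\}$ contains a ball of radius $\sim\sqrt{t-\tau}$ about $x$" degenerates. Perturbing $x$ by $2\sqrt s\,z$ in a direction $z$ with $\langle x,z\rangle\le0$ keeps $|x+2\sqrt s\,z|^2\le|x|^2+4s|z|^2\le a+s$, i.e.\ keeps $\xi$ inside the ball with room to spare, which is exactly what makes the half-ball $H$ — rather than a full ball centred at $x$ — the right region to use. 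This is in the same spirit as the proof of Lemma~\ref{lem7.4}.
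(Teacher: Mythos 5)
Your proof is correct and complete: the change of variables reduces the integral to $\pi^{-n/2}\int_E e^{-|z|^2}\,dz$, the inclusion $H\subset E$ follows exactly as you compute from $|x|^2\le T-t$, $\langle x,z\rangle\le 0$ and $|z|<\tfrac12$, and the reflection $z\mapsto -z$ gives the lower bound by half the Gaussian mass of the ball of radius $\tfrac12$, yielding a constant depending only on $n$. The paper itself does not prove this lemma but defers to \cite[Section 7]{T}; your half-ball argument (perturbing $x$ only in directions with $\langle x,z\rangle\le 0$ so as to stay inside the ball $\{|\xi|<\sqrt{T-\tau}\}$) is the natural self-contained proof and is in the same spirit as Lemma \ref{lem7.4}, so nothing further is needed.
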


\begin{lem}\label{lem7.6}
 Suppose $\alpha>0$, $\gamma>0$, $p\geq1$, and 
 \[f_0
 (x,t)=\left(\frac{1}{t}\right)^{\frac{n+2}{2p}-\gamma}\raisebox{2pt}{$\chi$}_{\Omega_0}(x,t)\quad\text{
   where }\Omega_0 =\{(x,t)\in \mathbb{R}^n\times\mathbb{R}:|x|^2 <t\}.\]
 Then $f_0 \in X^p$ and
 \[C_1 \left(\frac{1}{t}\right)^{\frac{n+2}{2p}-\gamma-\alpha}\leq J_\alpha f_0 (x,t)\leq C_2 \left(\frac{1}{t}\right)^{\frac{n+2}{2p}-\gamma-\alpha}\quad\text{for }(x,t)\in\Omega_0\]
 where $C_1$ and $C_2$ are positive constants depending only on $n,\alpha,\gamma$, and $p$.
\end{lem}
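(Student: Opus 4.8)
The plan is to handle the two assertions separately: the membership $f_0\in X^p$ is an elementary integral computation, and the two-sided bound on $J_\alpha f_0$ comes from writing $J_\alpha f_0$ as an iterated integral and estimating the spatial slice $\int_{|\xi|^2<\tau}\Phi_1(x-\xi,t-\tau)\,d\xi$ differently for $\tau$ near $t$ and for $\tau$ near $0$.

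For $f_0\in X^p$ I would fix $T>0$ and, using that $f_0$ is supported in $\{0<t,\ |x|^2<t\}$ and that the slice $\{|x|^2<t\}$ has measure $\omega_n t^{n/2}$ (with $\omega_n$ the volume of the unit ball in $\mathbb{R}^n$), compute
\[\|f_0\|_{L^p(\mathbb{R}^n\times\mathbb{R}_T)}^p=\int_0^T t^{-(\frac{n+2}{2p}-\gamma)p}\,\omega_n t^{n/2}\,dt=\omega_n\int_0^T t^{p\gamma-1}\,dt=\frac{\omega_n}{p\gamma}\,T^{p\gamma}<\infty,\]
where convergence at $t=0$ uses exactly $\gamma>0$; since this holds for every $T$, $f_0\in X^p$.

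For the pointwise bound, fix $(x,t)\in\Omega_0$ and use $\Phi_\alpha(y,s)=\frac{s^{\alpha-1}}{\Gamma(\alpha)}\Phi_1(y,s)$ for $s>0$ (from \eqref{I.9}) to write
\[J_\alpha f_0(x,t)=\int_0^t\frac{(t-\tau)^{\alpha-1}}{\Gamma(\alpha)}\,\tau^{-(\frac{n+2}{2p}-\gamma)}\,I(x,t,\tau)\,d\tau,\qquad I(x,t,\tau):=\int_{|\xi|^2<\tau}\Phi_1(x-\xi,t-\tau)\,d\xi.\]
For the lower bound I would keep only $\tau\in(t/4,3t/4)$, where Lemma \ref{lem7.4} applies (since $|x|^2<t$) and gives $I(x,t,\tau)\ge C(n)>0$; the substitution $\tau=ts$ then pulls out the power $t^{\alpha+\gamma-\frac{n+2}{2p}}$ and leaves a finite positive constant depending only on $n,\alpha,\gamma,p$. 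For the upper bound I would split the $\tau$-integral at $t/2$. On $(t/2,t)$ I use the trivial bound $I\le\int_{\mathbb{R}^n}\Phi_1(x-\xi,t-\tau)\,d\xi=1$, the estimate $\tau^{-(\frac{n+2}{2p}-\gamma)}\le C\,t^{-(\frac{n+2}{2p}-\gamma)}$ valid on this range, and $\int_{t/2}^t(t-\tau)^{\alpha-1}\,d\tau\le C(\alpha)t^\alpha$. On $(0,t/2)$, where $t-\tau\ge t/2$, I instead bound $\Phi_1(x-\xi,t-\tau)\le C(n)(t-\tau)^{-n/2}\le C(n)t^{-n/2}$, so $I\le C(n)\,\omega_n\,\tau^{n/2}t^{-n/2}$, and also $(t-\tau)^{\alpha-1}\le C(\alpha)t^{\alpha-1}$; the remaining factor $\int_0^{t/2}\tau^{\frac n2-(\frac{n+2}{2p}-\gamma)}\,d\tau$ converges because $\frac n2-(\frac{n+2}{2p}-\gamma)+1=\frac{n+2}{2}\left(1-\frac1p\right)+\gamma>0$, and equals $C\,t^{\frac n2-(\frac{n+2}{2p}-\gamma)+1}$. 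Collecting powers of $t$ in both pieces yields $J_\alpha f_0(x,t)\le C_2(n,\alpha,\gamma,p)\,t^{\alpha+\gamma-\frac{n+2}{2p}}$, as claimed.

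The main obstacle is the upper estimate near $\tau=0$: when $\frac{n+2}{2p}-\gamma\ge1$ the factor $\tau^{-(\frac{n+2}{2p}-\gamma)}$ is not integrable on its own, so one must use the smallness of the spatial slice (its volume is $\asymp\tau^{n/2}$) to restore integrability, and it is precisely here that the hypotheses $p\ge1$ and $\gamma>0$ are needed. An alternative, perhaps cleaner, route is to note that $f_0$ and $\Phi_\alpha$ are parabolically homogeneous, so the change of variables $\xi=\sqrt t\,\eta$, $\tau=ts$ gives $J_\alpha f_0(x,t)=t^{\alpha+\gamma-\frac{n+2}{2p}}\,J_\alpha f_0\!\left(x/\sqrt t,1\right)$; this reduces the problem to proving the uniform two-sided bound $C_1\le J_\alpha f_0(y,1)\le C_2$ for $|y|<1$, which follows from the same slice estimates specialized to $t=1$.
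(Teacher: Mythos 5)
Your proof is correct, and it is essentially the standard argument; note that the paper itself does not prove Lemma \ref{lem7.6} in this text but defers to \cite[Section 7]{T}, so there is no internal proof to diverge from. Your ingredients are the same ones the paper uses in the neighboring arguments: the computation $\|f_0\|_{L^p(\mathbb{R}^n\times\mathbb{R}_T)}^p=\omega_n\int_0^T t^{p\gamma-1}\,dt<\infty$ for $f_0\in X^p$; Lemma \ref{lem7.4} on the range $\tau\in(t/4,3t/4)$ for the lower bound (exactly as in the proof of Theorem \ref{thm3.4}); and, for the upper bound, the split at $\tau=t/2$, using total mass $\int_{\mathbb{R}^n}\Phi_1(x-\xi,t-\tau)\,d\xi=1$ on $(t/2,t)$ and the slice volume $\omega_n\tau^{n/2}$ together with $\Phi_1\le C(n)(t-\tau)^{-n/2}\le C(n)t^{-n/2}$ on $(0,t/2)$, where your check that $\tfrac n2-\bigl(\tfrac{n+2}{2p}-\gamma\bigr)+1=\tfrac{n+2}{2}\bigl(1-\tfrac1p\bigr)+\gamma>0$ is precisely where $p\ge1$ and $\gamma>0$ are used. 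The parabolic-scaling reduction to the case $t=1$ that you mention is also valid and gives a clean alternative packaging of the same estimates.
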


\begin{lem}\label{lem7.7}
 Suppose $\alpha>0$, $\gamma\in\mathbb{R}$, $0\leq t_0 <T,\,p\in[1,\infty)$, and
 \[
f(x,t)=\left(\frac{1}{T-t}\right)^{\frac{n+2}{2p}-\gamma}\raisebox{2pt}{$\chi$}_\Omega(x,t)
\]
 where
 $$\Omega=\{(x,t)\in\mathbb{R}^n \times(t_0 ,T):|x|<\sqrt{T-t}\}.$$
 Then
 $$J_\alpha f(x,t)\geq C\left(\frac{1}{T-t}\right)^{\frac{n+2}{2p}-\gamma-\alpha}$$
 for $(x,t)\in\Omega^+ :=\{(x,t)\in\Omega:\frac{T+t_0}{2}<t<T\}$ where $C=C(n,\alpha,\gamma,p)>0$.  Moreover,
 \begin{equation}\label{7.13}
  f\in L^p (\mathbb{R}^n \times\mathbb{R})\text{ if and only if }\gamma>0
 \end{equation}
 and in this case
 \begin{equation}\label{7.14}
  \| f\|^{p}_{L^p (\mathbb{R}^n \times\mathbb{R})}=C(n)\int^{T-t_0}_{0}s^{\gamma p-1}ds.
 \end{equation}
\end{lem}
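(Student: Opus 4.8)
The plan is to treat the two assertions independently: the pointwise lower bound on $J_\alpha f$ over $\Omega^+$ is obtained by discarding all of the convolution integral except a convenient slice and then invoking Lemma \ref{lem7.5}, while the $L^p$ statements \eqref{7.13}--\eqref{7.14} reduce, via Fubini, to a single one-dimensional integral. Throughout write $a=\tfrac{n+2}{2p}-\gamma$, so $f(x,t)=(T-t)^{-a}\raisebox{2pt}{$\chi$}_\Omega(x,t)$, and note that $\Omega$ is a \emph{bounded} subset of $\mathbb R^n\times\mathbb R$ (since $\Omega\subset B_{\sqrt{T-t_0}}(0)\times(t_0,T)$), so $f\in L^p(\mathbb R^n\times\mathbb R)$ iff $\|f\|_{L^p(\Omega)}<\infty$.

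For \eqref{7.13} and \eqref{7.14} I would compute directly. Using Fubini and $|\{x:|x|<\sqrt{T-t}\}|=|B_1|(T-t)^{n/2}$,
$$\|f\|_{L^p(\Omega)}^p=\int_{t_0}^T (T-t)^{-ap}\,|B_1|(T-t)^{n/2}\,dt=|B_1|\int_{t_0}^T (T-t)^{\gamma p-1}\,dt=|B_1|\int_0^{T-t_0}s^{\gamma p-1}\,ds,$$
after substituting $s=T-t$ and observing $-ap+\tfrac n2=\gamma p-1$. This integral is finite precisely when $\gamma p-1>-1$, i.e. when $\gamma>0$, which is \eqref{7.13}; and the displayed identity is \eqref{7.14} with $C(n)=|B_1|$.

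For the lower bound, fix $(x,t)\in\Omega^+$, so $|x|<\sqrt{T-t}$ and $\tfrac{T+t_0}{2}<t<T$; the latter forces $2t-T>t_0$, hence $(2t-T,t)\subset(t_0,t)$. Using $\Phi_\alpha(\cdot,\cdot)=\tfrac{(\cdot)^{\alpha-1}}{\Gamma(\alpha)}\Phi_1(\cdot,\cdot)$ and retaining only the contribution of the set $\{(\xi,\tau):2t-T<\tau<t,\ |\xi|<\sqrt{T-\tau}\}$ — which is contained in $\Omega$, so there $f(\xi,\tau)=(T-\tau)^{-a}$ — I would estimate
$$J_\alpha f(x,t)\ge\frac1{\Gamma(\alpha)}\int_{2t-T}^t (t-\tau)^{\alpha-1}(T-\tau)^{-a}\Bigl(\int_{|\xi|<\sqrt{T-\tau}}\Phi_1(x-\xi,t-\tau)\,d\xi\Bigr)d\tau.$$
For such $\tau$ we have $\tau<t\le T$ and $|x|\le\sqrt{T-t}$, so Lemma \ref{lem7.5} bounds the inner integral below by $C(n)>0$. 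Moreover $2t-T<\tau<t$ gives $T-t<T-\tau<2(T-t)$, whence $(T-\tau)^{-a}\ge 2^{-|a|}(T-t)^{-a}$ irrespective of the sign of $a$. Finally $\int_{2t-T}^t(t-\tau)^{\alpha-1}\,d\tau=\int_0^{T-t}u^{\alpha-1}\,du=(T-t)^\alpha/\alpha$, so combining the three estimates yields $J_\alpha f(x,t)\ge C(n,\alpha,\gamma,p)\,(T-t)^{\alpha-a}=C\bigl(\tfrac1{T-t}\bigr)^{\frac{n+2}{2p}-\gamma-\alpha}$ on $\Omega^+$.

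There is no real obstacle here; the two points that need attention are (i) that the chosen $\tau$-interval $(2t-T,t)$ must lie in $(t_0,\infty)$ so that the integrand equals $(T-\tau)^{-a}$ and Lemma \ref{lem7.5} applies — this is exactly what the restriction $t>\tfrac{T+t_0}{2}$ defining $\Omega^+$ provides — and (ii) that $\gamma$ (hence $a$) is an arbitrary real, so the comparison $(T-\tau)^{-a}\ge c(T-t)^{-a}$ on the chosen interval must be checked for both signs of $a$, which the factor $2^{-|a|}$ handles uniformly.
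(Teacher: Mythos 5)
Your proof is correct: the Fubini computation for \eqref{7.13}--\eqref{7.14} and the lower bound obtained by restricting the convolution to $\{2t-T<\tau<t,\ |\xi|<\sqrt{T-\tau}\}\subset\Omega$ and invoking Lemma \ref{lem7.5}, with the factor $2^{-|a|}$ handling either sign of $a$, all check out, including the exponent arithmetic $-ap+\tfrac n2=\gamma p-1$ and $\int_{2t-T}^t(t-\tau)^{\alpha-1}d\tau=(T-t)^\alpha/\alpha$. The paper itself defers the proof of this lemma to \cite[Section 7]{T}, and your argument is exactly the intended route suggested by the appendix (Lemma \ref{lem7.5} is stated there precisely for this slicing), so there is no substantive difference to report.
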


\end{document}